\newcommand{\field}[1]{\mathbb{#1}}
\newcommand{\R}{\field{R}}
\newcommand{\PP}{\field{P}}
\newcommand{\Hp}{\field{H}}
\newcommand{\C}{\field{C}}
\newcommand{\eps}{\varepsilon}
\newcommand{\ML}{\mathcal{ML}}
\newcommand{\PML}{\mathcal{PML}}
\newcommand{\acylindrical}{{\rm acylindrical}}
\newcommand{\Isom}{{\rm Isom}}
\newcommand{\Mod}{{\rm Mod}}
\newcommand{\window}{{\rm window}}
\newcommand{\wb}{{\rm wb}}
\newtheorem{theorem}{Theorem}[section]
\newtheorem{lemma}[theorem]{Lemma}
\theoremstyle{definition}
\newtheorem{remark}[theorem]{Remark}
\theoremstyle{plain}
\subjclass[2020]{Primary 57M30, secondary 30F60, 30F40.}
\keywords{Kleinian groups, deformation space, Ahlfors--Bers coordinates, algebraic convergence}
\begin{document}
\title[]{The Double Limit Theorem and its legacy.}
\author{Cyril Lecuire}


\begin{abstract}
This chapter surveys recent and less recent results on convergence of Kleinian representations, following Thurston's Double Limit and ``$AH(\acylindrical)$ is compact" Theorems.    
\end{abstract}

\maketitle


\section{Introduction}

Although Kleinian groups were discovered  in the late 19th century (by Schottky, Klein and Poincar\'e), the story of the present chapter's topic really starts in the early sixties with the works of Ahlfors and Bers on quasi-conformal deformation of Fuchsian groups (\cite{ahlfors:finitely:generated}, \cite{ahlfors:bers}, \cite{bers:spaces:kleinian}). In particular, after further development by Maskit \cite{maskit:selfmaps} and Kra \cite{kra:kleinian}, it led to the parametrization of the space of quasi-conformal deformations by the conformal structure at infinity. Combined with later works of Marden \cite{marden:finitely} and Sullivan \cite{sullivan:stability} this provided coordinates for the interior of the deformation space $AH(\pi_1(M))$ usually called the Ahlfors--Bers coordinates. This also led to Bers' compactification of Teichm\"uller space, \cite{bers:boundaries}, who in particular introduced sequences of quasi-Fuchsian groups converging to non quasi-Fuchsian ones. Meanwhile J{\o}rgensen developped methods to study sequences of Kleinian groups, showing that discreteness is a closed property and isolating two types of convergence, which he called algebraic and geometric. In contrast to this rich theory of deformations of quasi-Fuchsian groups, Schottky groups and other infinite covolume convex cocompact Kleinian groups, Mostow showed in the late sixties that cocompact Kleinian groups are rigid, \cite{mostow}.
Then in the late seventies, Thurston revolutionized the world of low-dimensional geometry, introducing original and exotic tools to prove beautiful and unexpected new results. 

In an incomplete series of articles (\cite{thurston:hypI} and sequel) Thurston planned to present the arguments involved in the proof of the Geometrization Theorem for Haken Manifolds. Convergence of Kleinian representations plays a central role in each of the three existing papers: the main result of \cite{thurston:hypI} is that $AH(\acylindrical)$ is compact (Theorem \ref{acylindrical} in the present chapter), the Double Limit Theorem (Theorem \ref{limit double} below) is essential in \cite{thurston:hypII} and \cite{thurston:hypIII} is devoted to the Broken Windows Theorem (Theorem \ref{broken window}) and related results.\\


Now that the historical context has been set up, let us get more technical. The deformation space $AH(\pi_1(M))$ of a hyperbolic $3$-manifold $M$ is the set of discrete and faithful representations $\rho:\pi_1(M)\to PSL_2(\C)$ up to conjugacy, equipped with the quotient of the compact open topology. We will elaborate on the topology of $AH(\pi_1(M))$ in \textsection \ref{sec:definitions} and \textsection \ref{sec:applications}. For now let us consider two simple cases: when $M$ is a product $I\times S$ over a closed surface and $M$ is acylindrical. In both cases  the conformal structures at infinity provides us with a homeomorphism $q:\mathrm{int}(AH(\pi_1(M)))\to \mathcal{T}(\partial_{\chi<0} M)$ and we call $q(\rho)$ the {\em Ahlfors--Bers coordinates} of $\rho$. From this homeomorphism, we get that if a sequence $\{\rho_i\}\subset \mathrm{int}(AH(\pi_1(M)))$ has bounded Ahlfors--Bers coordinates, then $\{\rho_i\}$ has a converging subsequence. The question we address in this chapter is: What is the behaviour of a sequence with diverging Ahlfors--Bers coordinates?\\

When $M$ is acylindrical, a complete answer has been provided by Thurston (\cite{thurston:hypI}, and an alternate proof was given by Morgan--Shalen \cite{morgan:shalen:3}) with the following result:

\begin{theorem}[$AH(\acylindrical)$ Is Compact]	\label{acylindrical}
If $M$ is any compact acylindrical $3$--manifold with boundary, then $AH(\pi_1(M))$ is compact.
\end{theorem}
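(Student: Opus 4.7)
The plan is to argue by contradiction: assume $\{\rho_i\}\subset AH(\pi_1(M))$ has no convergent subsequence (even after conjugation). From such a divergent sequence I will extract an essential annulus or non-peripheral incompressible surface in $M$, which contradicts the acylindrical hypothesis.

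First, I would invoke the Bestvina--Paulin / Culler--Morgan--Shalen compactification. After rescaling the hyperbolic metric on $\Hp^3$ by the inverse of a diverging scale $\lambda_i\to\infty$, chosen so that a fixed generating set attains maximal translation length $1$ at scale $\lambda_i$, and passing to a subsequence, the $\rho_i$ converge to an isometric action of $\pi_1(M)$ on a real tree $T$ with no global fixed point. The absence of a convergent subsequence in $AH(\pi_1(M))$ is precisely what forces $\lambda_i\to\infty$. Next I would verify that this action is \emph{small}: a pair of elements fixing a common arc of $T$ has vanishing rescaled commutator length, so discreteness of the $\rho_i$ forces their images to lie in an elementary subgroup of $\PSL_2(\C)$, whence arc stabilizers are virtually cyclic.

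I would then apply the Morgan--Shalen machinery (or, equivalently, the Rips--Bestvina--Feighn structure theorem for stable actions), which associates to a small action of a $3$--manifold group on a real tree a $\pi_1(M)$--equivariant map $\widetilde{M}\to T$ dual to a codimension-one measured lamination $\Lambda$ in $M$ whose leaves are incompressible and boundary-incompressible. The action decomposes into simplicial pieces, giving a graph-of-groups splitting of $\pi_1(M)$ over virtually cyclic subgroups, together with ``surface'' (interval-exchange) pieces supported on essential subsurfaces of $M$.

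Finally, I would derive a contradiction from acylindricity. A simplicial edge of $T$ yields a non-trivial splitting of $\pi_1(M)$ over a virtually cyclic subgroup, hence, via the annulus theorem, an essential annulus in $M$; acylindricity forces this annulus to be homotopic into $\partial M$, which trivializes the corresponding edge of $T$. A surface piece would similarly yield a non-peripheral essential map of a surface carrying a measured lamination into $M$, which acylindricity also forbids. Together these collapse the tree action to a global fixed point, the desired contradiction. The main obstacle is the third step: translating an abstract small $\R$--tree action into a concrete lamination in $M$ requires Skora-type duality and careful control of boundary behaviour, and it is exactly here that Thurston's geometric argument in \cite{thurston:hypI} and the algebraic-geometric approach of Morgan--Shalen \cite{morgan:shalen:3} diverge in technique.
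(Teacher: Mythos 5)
Your outline is the Morgan--Shalen route that this survey sketches (Culler--Morgan--Shalen compactification, smallness of the limiting action via Margulis/J{\o}rgensen, then $3$--manifold structure theory), and its first two steps are sound. The problem is in the final step, where the contradiction with acylindricity is actually extracted. You assert that a surface (interval-exchange) piece of the limit action yields ``a non-peripheral essential map of a surface carrying a measured lamination into $M$, which acylindricity also forbids.'' That last claim is false: acylindricity only excludes essential discs, annuli and tori, and acylindrical manifolds with boundary contain an abundance of non-peripheral essential surfaces (they are Haken). So the surface-type pieces are not ruled out by acylindricity per se. What actually kills them is the characteristic submanifold theory: a surface piece is realized (up to the surgeries and isotopies of \cite{morgan:shalen:2}) inside the characteristic submanifold as an essential $I$--bundle, so either its frontier annuli -- equivalently the cyclic peripheral edge groups of the orbifold vertex group -- give a splitting over a cyclic group and hence an essential annulus or torus, or the base is closed and $M$ itself is an $I$--bundle; both contradict acylindricity. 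This is exactly the content of \cite[Theorem IV.1.2]{morgan:shalen:3} (Theorem \ref{ms broken window} in this chapter), and it is the heart of the proof rather than a formal consequence of the definition of acylindrical.

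A second, related overstatement: you say the Morgan--Shalen machinery produces a codimension-one measured lamination \emph{dual} to the limit tree with incompressible, boundary-incompressible leaves. In general this duality fails -- there are small minimal actions of fundamental groups of compact atoroidal $3$--manifolds on real trees that are dual to no codimension-one lamination (\cite{ohshika:degeneration}); what Morgan--Shalen obtain is a lamination $\mathcal{L}$ together with a morphism from its dual tree to $\mathcal{T}$ which may fold, and the proof must be run with this weaker statement (it still forces $\pi_1$ of each component of $M-\mathcal{L}$ to fix a point, and $\mathcal{L}$ to live in the characteristic submanifold, which is empty here). If you prefer the Rips--Bestvina--Feighn alternative you mention, you must additionally verify stability of the limit action and then still convert the resulting splittings over small subgroups into annuli, tori or $I$--bundles -- i.e.\ the same $3$--manifold input as above. (A minor point: arc stabilizers here are small/abelian, which a priori includes $\Z\times\Z$, not just virtually cyclic groups; atoroidality is what disposes of that case.)
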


\newtheorem*{theorem:acylindrical}{Theorem \ref{acylindrical}}

Moreover, the fact that $AH(\pi_1(M))$ is compact characterizes acylindrical manifolds.\\

When $M=I\times S$, the Ahlfors--Bers coordinates are a pair of metrics $(\sigma^+,\sigma^-)\in\mathcal{T}(S)\times\mathcal{T}(S)$. A first condition for convergence comes from the works of Ahlfors and Bers (see Theorem \ref{ahlfors}):

\begin{theorem}
Let $\{\rho_i\}\subset AH(\pi_1(S))$ be a sequence of representations with Ahlfors--Bers coordinates $(\sigma_i^+,\sigma_i^-)$.  If $\{\sigma_i^+\}$ converges in the Teichm\"uller space $\mathcal{T}(S)$ then $\{\rho_i\}$ has a converging subsequence.
\end{theorem}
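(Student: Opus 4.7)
The plan is to reduce this to the classical boundedness of the Bers embedding together with a normal families argument. Since each $\rho_i$ has well-defined Ahlfors--Bers coordinates, it lies in $\mathrm{int}(AH(\pi_1(S)))$, which coincides with the quasi-Fuchsian locus $QF(S)$; Bers simultaneous uniformization then writes $\rho_i = Q(\sigma_i^+,\sigma_i^-)$. After passing to a subsequence and conjugating each $\rho_i$ by a suitable M\"obius transformation, I can arrange that the Fuchsian groups $\Gamma_i \subset \PSL_2(\R)$ uniformizing $\sigma_i^+$ converge to a Fuchsian group $\Gamma_\infty$ uniformizing $\sigma_\infty^+$, with the upper half-plane $\Hp$ contained in the upper component $\Omega_i^+$ of the domain of discontinuity of $\rho_i(\pi_1(S))$ and $\rho_i$ agreeing with $\Gamma_i$ on $\Hp$.

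Next I would invoke the Bers embedding. The representation $\rho_i$ gives rise to a conformal map $\psi_i \colon \Hp^- \to \Omega_i^-$ onto the lower component of the domain of discontinuity, and its Schwarzian derivative $\phi_i := S(\psi_i)$ is a $\Gamma_i$-invariant holomorphic quadratic differential on $\Hp^-$ whose hyperbolic sup-norm is bounded by the universal Kraus--Nehari constant $3/2$, independently of $\sigma_i^{\pm}$. Since $\{\phi_i\}$ is locally uniformly bounded on $\Hp^-$, Montel's theorem produces a subsequence converging locally uniformly to a holomorphic function $\phi_\infty$, which is then $\Gamma_\infty$-invariant and again bounded by $3/2$.

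The limit representation is recovered by integrating the Schwarzian. Solving $S(\psi_\infty) = \phi_\infty$ with a fixed normalization at three points produces a locally injective holomorphic map $\psi_\infty \colon \Hp^- \to \hat{\C}$, and the $\Gamma_\infty$-invariance of $\phi_\infty$ forces the existence of a unique homomorphism $\rho_\infty \colon \pi_1(S) \to \PSL_2(\C)$ with $\psi_\infty \circ \gamma = \rho_\infty(\gamma) \circ \psi_\infty$ for all $\gamma \in \Gamma_\infty$. Continuity of Schwarzian integration under local uniform convergence then gives $\rho_i \to \rho_\infty$ in the compact-open topology, and J{\o}rgensen's theorem guarantees $\rho_\infty$ is discrete and faithful, so $\rho_\infty \in AH(\pi_1(S))$.

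The main obstacle, I expect, is the bookkeeping around the normalization. One must choose the conjugating M\"obius transformations so that $\Gamma_i \to \Gamma_\infty$ holds in $\PSL_2(\R)$ and, simultaneously, the conformal identification of $\Omega_i^+$ with $\Hp/\Gamma_i$ is compatible across $i$; otherwise the $\Gamma_i$-invariance of $\phi_i$ need not limit to $\Gamma_\infty$-invariance of $\phi_\infty$, and the limit would fail to define an element of $AH(\pi_1(S))$. The analytic content — the Kraus--Nehari bound and Montel compactness — is then entirely classical.
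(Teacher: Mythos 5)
Your route is genuinely different from the one the paper relies on. The paper derives this statement from Lemma \ref{ahlfors} (Bers' inequality $\ell_\rho(\gamma)\leq 2\inf\{\ell_{\sigma^+}(\gamma),\ell_{\sigma^-}(\gamma)\}$): convergence of $\{\sigma_i^+\}$ bounds $\ell_{\sigma_i^+}$ on a fixed binding family of curves, hence bounds $\ell_{\rho_i}$ on that family, and bounded lengths on a binding family give a convergent subsequence (the mechanism of Theorems \ref{baby dl curves} and \ref{dl curves}, applied with constant sequences of multi-curves). You instead run the classical complex-analytic argument behind the boundedness of the Bers slice: the Kraus--Nehari bound on the Schwarzian of the equivariant Riemann map, Montel, continuity of the holonomy under local uniform convergence of the quadratic differential, and Chuckrow--J{\o}rgensen for discreteness and faithfulness of the limit. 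Both are legitimate; yours gives an explicit uniform bound (the $3/2$-ball in a finite-dimensional space of bounded quadratic differentials) by purely $2$-dimensional complex analysis, while the length-inequality route is the one that propagates to the rest of the paper, since it only uses length comparisons that survive when the structures at infinity degenerate to laminations.

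Two points in your normalization need repair, one of them essential. First, you cannot arrange that ``$\rho_i$ agrees with $\Gamma_i$ on $\Hp$'': a M\"obius transformation is determined by its restriction to any open set, so this would force $\rho_i(\pi_1(S))=\Gamma_i$, i.e.\ $\rho_i$ Fuchsian. What exists, and what you actually need, is a Riemann map from a half-plane onto the component of $\Omega_{\rho_i}$ whose quotient realizes $\sigma_i^+$, conjugating $\Gamma_i$ to $\rho_i(\pi_1(S))$. Second, and this is where the argument as written breaks, you take the Schwarzian of a Riemann map $\psi_i\colon\Hp^-\to\Omega_i^-$ onto the $\sigma_i^-$-side and assert it is $\Gamma_i$-invariant. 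It is not: the $\Gamma_i$-equivariant Riemann map lands in the $\sigma_i^+$-component, while the Schwarzian of a uniformizing map of the $\sigma_i^-$-component is invariant under a Fuchsian group uniformizing $\sigma_i^-$; since $\{\sigma_i^-\}$ is unconstrained, those groups need not converge, the limiting equivariance is lost, and no limiting representation can be extracted. The fix is simply to take $\psi_i$ on the convergent side, normalize it by fixing its $2$-jet at a basepoint (conjugating $\rho_i$ accordingly, which is harmless since $AH(\pi_1(S))$ consists of conjugacy classes), and then your chain Nehari--Montel--holonomy continuity--J{\o}rgensen goes through as you describe.
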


If we allow both coordinates $\sigma_i^+$ and $\sigma_i^-$ to diverge, then $\{\rho_i\}$ may not have a converging subsequence. To undertake a finer analysis, we need a way to quantify the behavior of diverging sequences. Thurston used his compactification of Teichm\"uller space by projective measured laminations in the celebrated Double Limit Theorem:

\begin{theorem}[Double Limit Theorem]		\label{limit double}
Let $S$ be a closed surface and let $\mu^+,\mu^-$ be two measured geodesic laminations that bind $S$. Then for any sequence $\{\sigma_i^+,\sigma_i^-\}$ in $\mathcal{T}(S)\times \mathcal{T}(S)$ converging to (the projective classes of) $(\mu^+,\mu^-)$ in $\overline{\mathcal{T}}(S)\times \overline{\mathcal{T}}(S)$, the sequence of quasi-Fuchsian representations with Ahlfors--Bers coordinates $(\sigma_i^+,\sigma_i^-)$ has a converging subsequence.
\end{theorem}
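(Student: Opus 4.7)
My plan is to argue by contradiction using the $\R$-tree compactification of the character variety. Suppose no subsequence of $\{\rho_i\}$ converges in $AH(\pi_1(S))$.

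First I would invoke the Bestvina--Paulin construction: after extracting a subsequence there exist positive scalars $\lambda_i\to\infty$ and a nontrivial, minimal, isometric action of $\pi_1(S)$ on an $\R$-tree $T$ such that $\ell_{\rho_i}(\gamma)/\lambda_i\to\ell_T(\gamma)$ for every $\gamma\in\pi_1(S)$. The discreteness and faithfulness of the $\rho_i$, combined with a Morgan--Shalen thick--thin analysis and the Margulis lemma, imply that the action is \emph{small}, i.e.\ arc stabilizers are cyclic. Skora's theorem then provides a nonzero measured lamination $\mu$ on $S$ with $\ell_T(\gamma)=i(\mu,\gamma)$ for every simple closed curve $\gamma$.

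The second half of the argument must extract from the projective convergence $\sigma_i^\pm\to[\mu^\pm]$ the conclusion that $i(\mu,\mu^+)=i(\mu,\mu^-)=0$. The key quantitative tool is Bers' inequality $\ell_{\rho_i}(\gamma)\leq 2\,\ell_{\sigma_i^\pm}(\gamma)$, valid for every simple closed curve $\gamma$ in a quasi-Fuchsian group. Writing the projective convergence as $\ell_{\sigma_i^\pm}(\cdot)/c_i^\pm\to i(\mu^\pm,\cdot)$ for scalars $c_i^\pm$, dividing Bers' inequality by $\lambda_i$ and letting $i\to\infty$ yields (after extracting a further subsequence so that $c_i^\pm/\lambda_i$ tends to a limit in $[0,\infty]$) a bound of the form $i(\mu,\gamma)\leq C^\pm\cdot i(\mu^\pm,\gamma)$ for every simple closed $\gamma$. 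By density of simple closed curves in $\ML(S)$ the bound extends to all measured laminations, and evaluating at $\mu^\pm$ together with $i(\mu^\pm,\mu^\pm)=0$ forces $i(\mu,\mu^\pm)=0$. The binding hypothesis on $(\mu^+,\mu^-)$ then yields the desired contradiction, since a nonzero measured lamination cannot have vanishing intersection with both $\mu^+$ and $\mu^-$.

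The main obstacle I anticipate is the case analysis in the length comparison step: the Bestvina--Paulin rescaling $\lambda_i$ is not a priori comparable to the Teichm\"uller scalings $c_i^\pm$, and the regime where $c_i^\pm/\lambda_i\to\infty$ requires either an auxiliary multi-scale argument (comparing the $\lambda_i$-rescaled limit with a $c_i^\pm$-rescaled one) or a strengthening of Bers' inequality to arbitrary measured laminations rather than just simple closed curves. Thurston's original proof sidesteps the tree machinery altogether, using pleated surfaces that realize train track approximations of $\mu^\pm$ together with uniform injectivity to produce the required length estimates directly; that approach is more geometric but also more intricate, and it is the one that generalizes most naturally to the acylindrical setting of Theorem~\ref{acylindrical}.
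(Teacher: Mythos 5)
Your first half (Bestvina--Paulin rescaled limit, smallness via the Margulis lemma, Skora's theorem producing a dual lamination $\nu$) is exactly the setup of Otal's proof as described in \textsection 2.3 and \textsection 3.2, and it is fine. The gap is in the second half, and it is not a peripheral case analysis: it is the entire content of the theorem. Your rescaled Bers inequality only yields $i(\nu,\gamma)\leq 2C^{\pm}\,i(\mu^{\pm},\gamma)$ when the Teichm\"uller scales $c_i^{\pm}$ are comparable to the tree rescaling $\lambda_i$; in the regime $c_i^{\pm}/\lambda_i\to\infty$ the limit is vacuous (the rescaled hyperbolic lengths go to $0$ at scale $c_i^\pm$, which is perfectly consistent with Bers), and nothing in the Bestvina--Paulin construction ties $\lambda_i$ to $c_i^{\pm}$. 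Moreover, to contradict binding you need \emph{both} intersections to vanish, hence both ratios to stay finite. You cannot rule out the incommensurate-scale regime a priori inside the proof by contradiction, and no ``strengthening of Bers' inequality to measured laminations'' helps --- Bers' inequality already extends to $\ML(S)$ by continuity and homogeneity; it points in the wrong direction for what you need, namely a \emph{lower} bound on $\ell_{\rho_i}$.

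What is actually required is a uniform lower bound on lengths of the \emph{varying} approximants: if $i(\nu,\mu^+)>0$, one must show $\varepsilon_i\,\ell_{\rho_i}(c)\geq K\,\ell_{s_0}(c)$ for all curves $c$ in a fixed neighborhood of $\mu^+$ and all large $i$, so that $\ell_{\rho_i}(\mu_i^+)\to\infty$ for the multicurves $\mu_i^+$ with $\ell_{\sigma_i^+}(\mu_i^+)$ bounded coming from Theorem \ref{thurston comp}, contradicting Lemma \ref{ahlfors}. Pointwise convergence $\varepsilon_i\ell_{\rho_i}(\gamma)\to i(\nu,\gamma)$ for each fixed $\gamma$ does not give this uniformity. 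This is precisely what the paper's two proofs supply: Thurston via the Efficiency of Pleated Surfaces (Theorem \ref{efficiency}), resting on the Uniform Injectivity Theorem, which reduces the problem to the two-dimensional estimate of Theorem \ref{baby dl curves}; Otal via realizing $\mu^+$ in the limiting tree with a one-switch train track and upgrading the equivariant map to one sending carried curves to quasi-geodesics, yielding the Continuity Theorem (Theorem \ref{contin}) --- and even then only under the almost-minimality hypothesis of Remark \ref{almost max}, which is a sign that no purely soft rescaling argument of the kind you propose can close the theorem. As written, your proposal is missing this key estimate, so it does not constitute a proof.
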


\newtheorem*{theorem:double}{Theorem \ref{limit double}}

Otal gave an alternative proof of this result in \cite{otal:fibre}. Thurston's, Morgan--Shalen's and Otal's proofs of Theorems \ref{acylindrical} and \ref{limit double} have seen adaptations and improvements by different authors which led to various generalizations. In this chapter, we will survey those generalizations and outline the arguments that are involved in their proofs.\\

We conclude this introduction with a plan of the chapter.
In the second Section we introduce deformation spaces and Thurston's and Culler--Morgan--Shalen's compactifications. In section \ref{sec:double limit}, we explain Thurston's and Otal's proofs of the Double Limit Theorem.  In the following section, we describe Thurston's and Morgan--Shalen's arguments leading to the proof of the compactness of $AH(\acylindrical)$ and its more general version, the Broken Window Only Theorem. Then we explain how to combine the Broken Window Only Theorem with the proof of the Double Limit Theorem to get a convergence Theorem for manifolds with incompressible boundary. 
In Section \ref{sec:compressible}, we describe in details progress 
that led to a general statement for all compact hyperbolic $3$-manifolds, answering a question of Thurston. In Section \ref{sec:necessity}, we mention the obstacles encountered when trying to relax the conditions in the Double Limit Theorem until they are necessary and sufficient. Then we describe a change of setting, using the curve complex to define 
 such necessary and sufficient conditions. Lastly in Section \ref{sec:applications}, we depict some of the applications of the theorems listed in this chapter, starting with Thurston's Hyperbolization Theorem.

\section{Compactifications of deformation spaces}    \label{sec:definitions}


\subsection{Definitions}

\subsubsection*{Deformation spaces}
Let $M$ be a compact $n$-manifold (we are only interested in the cases $n=2$ and $3$) and set $G=\pi_1(M)$. Let $\mathcal{D}(G)\subset Hom(G, \Isom^+(\Hp^d))$ denote the set of discrete and faithful representations. Given $\rho\in\mathcal{D}(G)$, the quotient $\Hp^d/\rho(G)$ is a complete hyperbolic $n$-manifold homotopy equivalent to $M$. We equip $Hom(G, \Isom^+(\Hp^d))$ (and hence $\mathcal{D}(G)$) with the compact open topology, so that $\rho_n\longrightarrow\rho$ if $\rho_n(g)\longrightarrow\rho(g)$ for any $g\in G$. This topology is also called the {\em algebraic topology}. Notice that when $G$ is not Abelian, $\mathcal{D}(G)$ is a closed subset (\cite{chuckrow}, \cite{jorgensen:transformations}). The group $\Isom^+(\Hp^d)$ acts properly discontinuously by conjugacy on $\mathcal{D}(G)$ and the quotient $AH(G)$ is the {\em deformation space} of $G$. $AH(G)$ is also the space of marked hyperbolic structures $(N,h)$ where $N$ is a complete hyperbolic $n$-manifold and $h:M\to N$ is a homotopy equivalence, modulo the equivalence relation $(N,h)\sim (N',h')$ if there is an isometry $\psi:N\to N'$ such that $h'$ is homotopic to $\psi\circ h$.

When $d=2$ and $\partial M=\emptyset$, $AH(G)=\mathcal{T}(M)\cup \mathcal{T}(\overline{M})$ is the union of two copies of the Teichm\"uller space of $M$.

When $d=3$, by Thurston's Hyperbolisation Theorem, $AH(G)\neq\emptyset$ if and only if $M$ is irreducible and atoroidal. Let us focus on this case making our way towards the Ahlfors--Bers coordinates mentioned in the introduction. To simplify the notation and statements, we will use the same notation for a conjugacy class in $AH(G)$ and a representative of this conjugacy class and we will assume that $M$ is orientable and that $\partial M$ contains no tori.

\subsubsection*{Ahlfors--Bers coordinates}
 Given $\rho\in AH(G)$, the group $\rho(G)$ acts by conformal transformations on $\hat\C=\partial_\infty\Hp^3$. Let $\Omega_\rho$ be the maximal invariant open subset on which this action is properly discontinuous. We say that $\rho$ is {\em convex cocompact} if $(\Hp^3\cup\Omega_\rho)/\rho(G)$ is compact (this is equivalent to more classical definitions, see \cite{marden:finitely}). By \cite{marden:finitely} and \cite{sullivan:stability}, $\rho$ is in the interior of $AH(G)$ if and only if it is convex cocompact. To each component $\mathcal{C}$ of $\mathrm{int}(AH(\pi_1(M)))$ is associated a pair $(N,h)$ where $N$ is a compact $3$-manifold and $h:M\to N$ is a homotopy equivalence (up to an equivalence relation, see \cite{acm}, here we only need a representative). Then, for each $\rho\in \mathcal{C}$ there is a homeomorphism $f_\rho:N\to (Hp^3\cup\Omega_\rho)/\rho(G)$ such that $(f_{\rho}\circ h)_*=\rho$. Since the only requirement on $f_\rho$ is  $(f_{\rho}\circ h)_*=\rho$, the isotopy class of $f_\rho$ is uniquely defined up to the action of the group $\Mod_0(N)$ of isotopy classes of orientation-preserving homeomorphisms of $N$ that are homotopic to the identity.
 
 Associating to a each representation $\rho\in\mathcal{C}$ its conformal structure at infinity $\Omega_\rho/\rho(G)$, we get a map $q:\mathcal{C}\to \mathcal{T}(\partial N)/\Mod_0(N)$. As mentioned in the introduction, by results of Ahlfors--Bers \cite{ahlfors:bers}, Bers \cite{bers:spaces:kleinian}, Maskit \cite{maskit:selfmaps} and Kra \cite{kra:kleinian}, $q$ is a homeomorphism. We call $q(\rho)$ the {\em Ahlfors--Bers coordinates} of $\rho$. When $M$ has incompressible boundary, $Mod_0(M)$ and $Mod_0(N)$ are trivial and $\mathcal{C}\approx \mathcal{T}(\partial N)$ is an open ball.
 
Notice that when $M=S\times I$, $M$ is acylindrical or $M$ is a handlebody, $\mathrm{int}(AH(\pi_1(M)))$ has only one component (corresponding to $(M,Id)$). The interested reader may refer to \cite{acm} or \cite{anderson:survey:deformation} for an enumeration of the  components of $AH(\pi_1(M))$ and $\mathrm{int}(AH(\pi_1(M)))$ in general.\\

To study sequences that do not converge in the interior of $AH(\pi_1(M))$, we want to describe how their Ahlfors--Bers coordinates diverge in Teichm\"uller space. This naturally leads us to introduce Thurston's compactification.

Before that, let us finish this section with a notation. In a compact connected $n$-manifold $M$, a closed curve $\gamma$ defines through its free homotopy class a conjugacy class in the fundamental group that we will also denote by $\gamma$. Given $\rho\in AH(\pi_1(M))$, we denote by $\ell_\rho(\gamma)$ the length in $\Hp^n/\rho(\pi_1(M))$ of the geodesic $\gamma^*_\rho$ in the free homotopy class corresponding to $\rho(\gamma)$.

\subsection{Thurston's compactification of Teichm\"uller space}

Thurston constructed a compactification of Teichm\"uller space by projective measured foliations or equivalently projective measured geodesic laminations. We will adopt the latter since it is better suited to applications and extensions to Kleinians group. Before proceeding, let us briefly mention that this compactification led to Thurston's celebrated classification of surface homeomorphisms (\cite[Theorem 2.5]{thurston:hypII}, see also \cite{flp} or \cite{flp:english}).

A {\em geodesic lamination} $L$ on a closed hyperbolic surface $S$ is heuristically a Hausdorff limit of multi-curves, i.e. disjoint unions of simple closed geodesics. The actual definition, which follows, encompasses a slightly larger set but in practice we will only consider such limits. A {\em geodesic lamination} is a compact set that is a (non-empty) disjoint union of complete embedded geodesics. Note that this definition can be made independent of the choice of metric on $S$, see \cite[Appendice]{otal:fibre} for example. 

A {\em measured geodesic lamination} $\lambda$ consists of a geodesic lamination $|\lambda|$ and a transverse measure on $|{\lambda}|$: any arc $k\cong [0,1]$ embedded in $S$ transverse to $|\lambda|$, such that  $\partial k\subset S-|\lambda|$, is endowed with a transverse measure $d\lambda$ such that:
 \begin{enumerate}[-]
 \item the support of $d\lambda |_{k}$ is $|\lambda|\cap k$;
 \item if an arc $k'$ can be homotoped to $k$ by a homotopy preserving $|\lambda|$ then $\int_k\! d\lambda=\int_{k'} d\lambda$.
 \end{enumerate}
The simplest case of measured geodesic laminations is a weighted simple closed geodesic $\delta c$, i.e. a simple closed geodesic $c$ equipped with a transverse Dirac measure with weight $\delta$. Weighted multi-curves are dense in the space $\mathcal{ML}(S)$ of measured geodesic laminations equipped with the weak$^*$ topology. Thus measured geodesic laminations can simply be viewed as limits of weighted multi-curves.

Given a hyperbolic metric on $S$, and hence a faithful and discrete representation $\rho:\pi_1(S)\to PSL_2(\R)$, the length of a weighted simple closed geodesic $\delta c$, is defined by homogeneity: $\ell_\rho(\delta c)=\delta\ell_\rho(c)$. Then the length of a weighted multi-curve is simply the sum of the length of its weighted leaves and the length of a measured geodesic lamination is defined by taking limits of lengths of weighted multi-curves. Alternatively, given a measured geodesic lamination $\mu$, we may pick a family $k$ of arcs transverse to its support $|\mu|$ so that the components of $|\mu|-{k}$ are arcs with bounded lengths. Then the length of $\mu$ is computed by integrating the lengths of these arcs over the transverse measure. It turns out that these two definitions are equivalent and it follows from this equivalence that the definition using limit of sequences of weighted multi-curves is independent of the choice of the sequence.

Given a simple closed geodesic $c$ and $\lambda\in\mathcal{ML}(S)$, the intersection number $i(c,\lambda)$ is the total weight of the measure on $c$ when $c$ is transverse to $\lambda$ and is $0$ otherwise, i.e. when $c$ lies in or is disjoint from the support of $\lambda$. This extends to weighted simple closed geodesics by homogeneity: $i(\delta c,\lambda)=\delta i(c,\lambda)$, to weighted multi-curves by additivity and then to measured geodesic laminations by continuity.

There is a natural action of $\R_+^*$ on $\mathcal{ML}(S)$ obtained by multiplying the measure and the space $\mathcal{PML}(S)$ of projective measured geodesic laminations is the quotient of $\mathcal{ML}(S)-\{0\}$ under this action.

Thurston uses the intersection number to define a compactification of Teichm\"uller space by projective measured geodesic laminations (\cite[Theorem 2.2]{thurston:bams}):

\begin{theorem}[Laminations compactify Teichm\"uller space]      \label{thurston comp}

The union $\overline{T(S)}=T(S)\cup \PML(S)$ has a natural topology homeomorphic to a closed ball.

In this topology, a sequence \textsc{$\{\rho_i\}$} of representations in $\mathcal{T}(S)$ converges to a lamination $[\mu]\in \PML(S)$ if and only if there is a sequence $\{\mu_i\}\longrightarrow\infty$ (i.e. there is an arc $k$ with $\int_kd\mu_i\longrightarrow\infty$) of measured laminations converging projectively to $\mu$ such that for all $\mu'\in \ML(S)$ for which $i(\mu',\mu)\neq 0$,
$$\lim_{i\longrightarrow\infty}\frac{\ell_{\rho_i}(\mu')}{i(\mu_i,\mu')}=1.$$
Furthermore, $\ell_{\rho_0}(\mu_i)\longrightarrow\infty$ but $\ell_{\rho_i}(\mu_i)$ remains bounded.

Moreover, there is a constant $C$ such that
\begin{equation}
i(\mu',\mu_i)\leq \ell_{\rho_i}(\mu')\leq i(\mu',\mu_i)+ C \ell_{\rho_0}(\mu').	 \label{thucom}
\end{equation}

\end{theorem}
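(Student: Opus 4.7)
The plan is to realize $\mathcal{T}(S) \cup \PML(S)$ as a subspace of the compact projective space $\mathbb{P}\R_{\geq 0}^{\mathcal{C}}$, where $\mathcal{C}$ is the countable set of isotopy classes of essential simple closed curves on $S$. First I would define the two maps $\Phi: \mathcal{T}(S) \to \mathbb{P}\R_{\geq 0}^{\mathcal{C}}$, $\rho \mapsto [(\ell_\rho(c))_{c \in \mathcal{C}}]$ and $\Psi: \PML(S) \to \mathbb{P}\R_{\geq 0}^{\mathcal{C}}$, $[\mu] \mapsto [(i(\mu, c))_{c \in \mathcal{C}}]$, and check that both are injective: for $\Phi$ this is a standard consequence of the Fenchel--Nielsen parametrization, since the lengths of a pants decomposition together with its dual curves determine the metric; for $\Psi$ injectivity and the identification of $\PML(S)$ with a $(6g-7)$-sphere come from train-track coordinates on $\ML(S)$. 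The goal is to prove that $\overline{\Phi(\mathcal{T}(S))} = \Phi(\mathcal{T}(S)) \sqcup \Psi(\PML(S))$, whereby the compactification is topologically a closed ball of dimension $6g-6$.

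The inclusion $\Psi(\PML(S)) \subset \overline{\Phi(\mathcal{T}(S))}$ is the easier direction: fix a base metric $\rho_0$, and for $[\mu] \in \PML(S)$ deform $\rho_0$ along the earthquake path $E^{t\mu}$. A standard asymptotic of hyperbolic length under earthquakes gives $\ell_{E^{t\mu}(\rho_0)}(\alpha) = t\, i(\mu, \alpha) + O(1)$ for each simple closed curve $\alpha$, so $[\ell_{E^{t\mu}(\rho_0)}] \to [i(\mu,\cdot)]$. The converse, which I anticipate as the main technical obstacle, asserts that every sequence $\rho_i$ leaving compact sets of $\mathcal{T}(S)$ accumulates in $\Psi(\PML(S))$. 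The quantitative estimate (\ref{thucom}) is proved alongside and furnishes all needed control. For each $\rho_i$ one constructs an approximating measured lamination $\mu_i$ carried by a maximal train track $\tau$ adapted to $\rho_0$, with branch weights chosen so that one unit of weight represents one unit of $\rho_i$-length along the corresponding transverse arc. The lower bound $i(\mu', \mu_i) \leq \ell_{\rho_i}(\mu')$ is then a crossing count: each transverse intersection of the $\rho_i$-geodesic representative of $\mu'$ with a weighted branch of $\mu_i$ contributes at least its weight to the $\rho_i$-length. The upper bound $\ell_{\rho_i}(\mu') \leq i(\mu', \mu_i) + C \ell_{\rho_0}(\mu')$ follows by splitting the $\rho_i$-geodesic of $\mu'$ into arcs crossing branches of $\tau$ (contributing the intersection term) and arcs lying in the complementary regions of $\tau$, whose total length is controlled by $\ell_{\rho_0}(\mu')$ up to a constant depending only on the $\rho_0$-diameter of those regions.

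With (\ref{thucom}) in hand, the sequence characterization falls out by a soft argument. Compactness of $\PML(S)$ extracts a projective limit $[\mu]$ of the $\mu_i$, and dividing (\ref{thucom}) through by $i(\mu_i, \mu')$ yields $\ell_{\rho_i}(\mu')/i(\mu_i, \mu') \to 1$ whenever $i(\mu, \mu') \neq 0$, since in that case $i(\mu_i, \mu') \to \infty$ while the error term $C \ell_{\rho_0}(\mu')$ stays fixed. Boundedness of $\ell_{\rho_i}(\mu_i)$ is built into the construction of $\mu_i$: its support is carried by short $\rho_i$-geodesic segments in the branches of $\tau$, and the total $\rho_i$-length of $\mu_i$ is comparable to the self-intersection $i(\mu_i, \mu_i)$, which tends to zero in the projective limit because $i(\mu, \mu) = 0$. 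Finally, the topology of the compactification as a closed $(6g-6)$-ball is obtained from the fact that $\Phi$ is an open embedding of $\R^{6g-6}$ meeting its frontier $\Psi(\PML(S)) \cong S^{6g-7}$ along a bicollared sphere, an explicit collar being furnished by the earthquake flow used above.
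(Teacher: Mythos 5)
A preliminary remark: the paper does not actually prove this statement---it quotes it from Thurston (\cite[Theorem 2.2]{thurston:bams}) and offers only the heuristic that a divergent sequence of metrics, having fixed area, must be stretched transversely to some lamination $\mu_i$. So the comparison is with the standard proofs (Thurston, FLP). Your overall architecture --- embed $\mathcal{T}(S)$ and $\PML(S)$ into the projective space of functionals on simple closed curves via length and intersection number, show that the closure of the image of $\mathcal{T}(S)$ is exactly the union, and deduce the ball statement --- is the right skeleton, and the earthquake argument for $\Psi(\PML(S))\subset\overline{\Phi(\mathcal{T}(S))}$ is a legitimate device.

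The heart of the theorem, however, is the construction of $\mu_i$ together with the two-sided estimate (\ref{thucom}), and there your sketch has genuine gaps. (i) A single maximal train track $\tau$ ``adapted to $\rho_0$'' carries only a closed chart of $\ML(S)$, so it cannot produce approximations $\mu_i$ for an arbitrary degenerating direction; you would need finitely many charts and a subsequence, or better a construction intrinsic to $\rho_i$ (this is exactly what Thurston's ``stretched transverse to a lamination since the area is bounded'' picture, or FLP's fundamental lemma in Dehn--Thurston coordinates, supplies). (ii) The upper bound in (\ref{thucom}) is precisely the hard point: the arcs of the $\rho_i$-geodesic representative of $\mu'$ lying in the complementary regions of $\tau$ have $\rho_i$-length, and nothing in your setup bounds the $\rho_i$-geometry of those regions in terms of their $\rho_0$-diameter --- controlling that distortion uniformly in $i$ is the actual content of the theorem, not a closing remark. (iii) Your justification that $\ell_{\rho_i}(\mu_i)$ stays bounded is incorrect: $i(\nu,\nu)=0$ for \emph{every} measured geodesic lamination, so ``comparable to the self-intersection, which tends to zero in the projective limit'' is not meaningful, and plugging $\mu'=\mu_i$ into (\ref{thucom}) only yields $\ell_{\rho_i}(\mu_i)\leq C\ell_{\rho_0}(\mu_i)\longrightarrow\infty$; the boundedness must be extracted from the (still unspecified) construction of $\mu_i$. (iv) Finally, an open $(6g-6)$-ball whose frontier is a $(6g-7)$-sphere need not be a closed ball; you need collaredness at the boundary, e.g.\ uniform asymptotics for the earthquake parametrization $\mu\mapsto E^{\mu}(\rho_0)$ of $\mathcal{T}(S)$, which is plausible but is itself a nontrivial estimate. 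Also note that you only treat one direction of the ``if and only if'', and that convergence to $[\mu]$ in the compactification also requires handling the curves with $i(\mu,\mu')=0$, which again relies on (\ref{thucom}).
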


The first part of the statement defines the compactification of Teichm\"uller space by projective measured geodesic lamination. The general idea is that if a sequence eventually stays outside every compact set, the lengths of some closed geodesics go to infinity: the metric is stretched. Since the area is bounded, locally, the metric is stretched only in one direction, transversely to a measured geodesic lamination $\mu_i$ so that $\lim_{i\longrightarrow\infty}\frac{\ell_{\rho_i}(\mu')}{i(\mu_i,\mu')}=1$. We may then extract a projectively converging subsequence from the sequence $\{\mu_i\}$.

Formula (\ref{thucom}) gives a more precise and uniform approximation. This uniformity can be used to prove a convergence result for surfaces in the spirit of the  Double Limit Theorem (compare with Theorem \ref{dl curves}). We say that two measured geodesic laminations $\gamma,\lambda$ {\em bind} $S$ if $i(\gamma,\nu)+i(\lambda,\nu)>0$ for any non-trivial $\nu\in\ML(S)$.

\begin{theorem}		\label{baby dl curves}
Let $S$ be a closed surface and let $\mu^\pm$ be two measured geodesic laminations that bind $S$. Let $\{\mu_i^\pm\}$ be two sequences of weighted multi-curves converging $\mu^\pm$. Then any sequence $\{\rho_i\}\subset \mathcal{T}(S)$ such that $\{\ell_{\rho_i}(\mu_i^+)\}$ and $\{\ell_{\rho_i}(\mu_i^-)\}$ are bounded has a converging subsequence.
\end{theorem}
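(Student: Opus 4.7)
My plan is to argue by contradiction, using Thurston's compactification to convert a diverging sequence of metrics into a projective measured lamination that must intersect one of $\mu^\pm$ non-trivially by the binding hypothesis, and then using the lower bound in (\ref{thucom}) to force one of the lengths $\ell_{\rho_i}(\mu_i^\pm)$ to diverge.

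Suppose for contradiction that $\{\rho_i\}$ has no convergent subsequence in $\mathcal{T}(S)$. By compactness of $\overline{\mathcal{T}(S)}$ (Theorem \ref{thurston comp}), after extraction I may assume $\rho_i \to [\lambda]$ in $\overline{\mathcal{T}(S)}$ for some $[\lambda]\in \mathcal{PML}(S)$. Since $\mu^+,\mu^-$ bind $S$, at least one of $i(\mu^+,\lambda),\, i(\mu^-,\lambda)$ is positive; exchanging the roles of $\pm$ if needed, I will assume $i(\mu^+,\lambda)>0$. Theorem \ref{thurston comp} then provides a sequence $\{\lambda_i\}$ of measured geodesic laminations with $\lambda_i\to\infty$ and $[\lambda_i]\to[\lambda]$ for which (\ref{thucom}) holds, and in particular
\[ i(\mu_i^+,\lambda_i) \leq \ell_{\rho_i}(\mu_i^+). \]

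It remains to show that the left hand side tends to infinity. Fix a representative $\hat\lambda$ of the projective class $[\lambda]$ and write $\lambda_i = t_i \hat\lambda_i$ with $\hat\lambda_i \to \hat\lambda$ in $\mathcal{ML}(S)$; then $t_i \to \infty$ because $\lambda_i \to \infty$ while $\hat\lambda_i$ has bounded mass. By continuity of the intersection pairing on $\mathcal{ML}(S)\times \mathcal{ML}(S)$, $i(\mu_i^+,\hat\lambda_i) \to i(\mu^+,\hat\lambda) > 0$, so $i(\mu_i^+,\lambda_i) = t_i \cdot i(\mu_i^+,\hat\lambda_i) \to \infty$. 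This contradicts the assumed boundedness of $\{\ell_{\rho_i}(\mu_i^+)\}$ and proves the theorem.

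The only mildly delicate point is feeding the varying sequence $\mu_i^+$ into (\ref{thucom}) rather than a single fixed lamination; but the lower inequality $i(\mu',\lambda_i) \leq \ell_{\rho_i}(\mu')$ is a universal transversality bound valid for every $\mu'\in \mathcal{ML}(S)$ and every $i$, so this is not really an obstacle. The whole argument is really just Thurston's asymptotic formula together with the observation that binding forces the length of something to blow up.
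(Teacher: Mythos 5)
Your proposal is correct and follows essentially the same route as the paper's own proof: contradiction via Thurston's compactification, the binding hypothesis forcing $i(\mu^+,\lambda)>0$ (up to swapping $\pm$), the lower bound in (\ref{thucom}), and joint continuity of the intersection number to get $i(\mu_i^+,\lambda_i)\to\infty$. Your explicit decomposition $\lambda_i=t_i\hat\lambda_i$ merely spells out the step the paper dispatches with the phrase ``by continuity of the intersection number,'' so there is nothing substantively different here.
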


\begin{proof}
If $\{\rho_i\}$ does not have a converging subsequence then it has a subsequence converging to a projective measured geodesic lamination $[\nu]$.  Theorem \ref{thurston comp} provides a sequence $\{\nu_i\}\longrightarrow\infty$ converging projectively to $\nu$, i.e. $\{\eps_i \nu_i\}$ converges to $\nu$ for a sequence $\eps_i\longrightarrow 0$, such that the inequalities (\ref{thucom}) are satisfied. Since $\mu^+$ and $\mu^-$ bind $S$, $i(\mu^+,\nu)+i(\mu^-,\nu)>0$, say $i(\mu^+,\nu)>0$. By continuity of the intersection number, $i(\mu_i^+,\nu_i)\longrightarrow\infty$. Now inequality (\ref{thucom}) contradicts the assumption that $\ell_{\rho_i}(\mu^+)$ is bounded.
\end{proof}

\subsection{Culler--Morgan--Shalen's compactification}    \label{compactification:tree}

A different point of view on the compactification of deformation spaces, using methods from algebraic geometry, was introduced by Culler and Shalen in \cite{culler:shalen} and then further developed by Morgan and Shalen (\cite{morgan:shalen:1}, \cite{morgan:shalen:2} and \cite{morgan:shalen:3}). In particular, in \cite{morgan:shalen:1} and \cite{morgan:compactification} (see also \cite{morgan:shalen:intro}), Morgan and Shalen use valuations to compactify deformation spaces for hyperbolic manifolds in any dimension. The added points are actions on $\Lambda$-trees from which one easily extracts an action on a real tree (more details about these below). By a result of Skora, \cite{skora:splitting}, (small minimal) actions of surface groups on real trees are dual to measured geodesic  laminations. Thus, in dimension $2$, Thurston's and Culler--Morgan--Shalen's compactification of Teichmüller spaces are equivalent.

In \cite{bestvina:degenerations}, \cite{paulin} and \cite{chiswell}, Bestvina, Paulin and Chiswell give an alternative and more geometric approach (with some variations) to Culler--Morgan--Shalen's compactification by actions on real trees. Let us sketch the ideas behind that geometric approach.

Consider a sequence of faithful and discrete representation $\rho_i:G\to Isom(\Hp^d)$ of a non-Abelian finitely generated group $G$ and set \( K_i=\inf\limits_{x\in\Hp^d}\{\max\limits_{g\in S} d(x,gx)\}\) for a finite generating set $S\subset G$. Since $\rho_i(G)$ is discrete and non-Abelian, $K_i$ is a minimum reached at some point $x_i$. Up to conjugating $\rho_i$, we may assume that $x_i=O$. The sequence $\{\rho_i\}$ stays in a compact subset of the deformation space if and only if $K_i$ is bounded. When $K_i$ goes to infinity, one rescales $\Hp^d$ by multiplying the distances by $K_i^{-1}$ so that the action of $\rho_i(S)$ on $K_i^{-1}\Hp^d$ is bounded. In $\Hp^d$, geodesic triangles are $\delta$-thin, in the sense that any edge lies in a $\delta$-neighbourhood of the other two (with $\delta=\log 2$). When we rescale the metric, the triangles become $K_i^{-1} \delta$-thin with $K_i^{-1} \delta\longrightarrow 0$, so that they look more and more like tripods as $i$ goes to $\infty$. One then just needs the appropriate formalism to find a subsequence such that the action of $\rho_i(G)$ on $K_i^{-1}\Hp^n$ tends in some way to  an action on a geodesic metric space where every geodesic triangle is a tripod. Such a space is called a {\em real tree}, a generalisation of simplicial trees that allows more flexibility on the vertices (they can accumulate or form a continuum). The convergence "in some way" is made formal by using the pointed Gromov--Hausdorff topology and either sequences of expanding finite subsets of $G$ (as in \cite{bestvina:degenerations} and \cite{paulin}) or ultra-filters (\cite{chiswell}, see also \cite[chapter 9]{kapovich:book}). Thus we have extracted a subsequence of $\rho_i$ converging to an action of $G$ on a real tree. Up to taking a substree, the action can be assumed to be {\em minimal}, i.e. there is no invariant subtree. Furthermore, one can deduce from Margulis' Lemma that the action is {\em small}, i.e. edge stabilizers are Abelian. Notice that if we choose a different generating set $S$, we may get a different sequence $K_i$ and the limiting tree may differ by a homothety.\\

As mentioned above, in dimension $2$, the compactification by actions on real trees is equivalent to Thurston's compactification by projective measured geodesic laminations. This identification goes through the dual tree $\mathcal{T}_\lambda$ to a measured geodesic laminations $\lambda\in\mathcal{ML}(S)$. To define $\mathcal{T}_\lambda$, we first replace the closed leaves by foliated neighbourhoods so that the transverse measure has no atoms. The preimage  $\tilde \lambda\subset \Hp^2$ of $\lambda$ under the covering projection $\Hp^2\to S$ defines a partition $\mathcal{P}$ of $\Hp^2$ into closed sets. An element of $\mathcal{P}$ is either the closure of a component of $\Hp^2-{|\lambda|}$ or a leaf of $|\lambda|$ which is not in the closure of such a component. The transverse measure defines a distance on $\mathcal{P}$ turning it into a real tree $\mathcal{T}_\lambda$ and the action of $\pi_1(S)$ on $\Hp^2=\tilde S$ induces an action on $\mathcal{T}_\lambda$. Notice that by the theorem of Skora (\cite{skora:splitting}), any small minimal action of $\pi_1(S)$ on a real tree is dual to a measured geodesic lamination.

If a sequence of representations $\rho_i:\pi_1(S)\to PSL(2,\R)$ converges in Thurston's compactification to a (projective) measured geodesic lamination $\mu$, then $\{\rho_i\}$ also converges in Culler--Morgan--Shalen's compactification to the action of $\pi_1(S)$ on $\mathcal{T}_\mu$. A simple way to see the unity of these two compactifications is to look at translation lengths and intersection numbers. Given an action of a group $G$ on a real tree $\mathcal{T}$ and $g\in G$, define its translation distance by $\delta_{\mathcal{T}}(g)=\inf\{d(x,gx)|x\in\mathcal{T}\}$. By \cite{culler:morgan}, a minimal action of $G$ by isometries on a real tree is uniquely defined by the function $\delta:G\to\R^+$. If $c$ is a simple closed curve on $S$ and if we also denote by $c$ the corresponding element of $\pi_1(S)$, then we have $\delta_{\mathcal{T}_\mu}(c)=i(\mu,c)$. Now in Thurston's compactification, we have $\eps_i\longrightarrow 0$ such that $\eps_i\ell_{\rho_i}(c)\longrightarrow i(c,\mu)$ while in Culler--Morgan--Shalen's, we have $\eps_i\ell_{\rho_i}(c)\longrightarrow \delta_{\mathcal{T}}(c)$. Hence $\mathcal{T}$ is dual to $\mu$.


\section{The Double Limit Theorem}  \label{sec:double limit}

In this section, we will describe Thurston's and Otal's proofs of the Double Limit Theorem. Let us first recall its statement.

\begin{theorem:double}
Let $S$ be a closed surface and let $\mu^+,\mu^-$ be two measured geodesic laminations that bind $S$. Then for any sequence $\{(\sigma_i^+,\sigma_i^-)\}$ in $\mathcal{T}(S)\times \mathcal{T}(S)$ converging to $(\mu^+,\mu^-)$ in $\overline{\mathcal{T}}(S)\times \overline{\mathcal{T}}(S)$, the sequence of quasi-Fuchsian representations with Ahlfors--Bers coordinates $(\sigma_i^+,\sigma_i^-)$ has a converging subsequence.
\end{theorem:double}

The first step in both proofs consists in establishing a link between the lengths of curves with respect to the conformal structures at infinity and their lengths inside the quotient $3$-manifold. Let $\rho:\pi_1(S)\to PSL_2(\C)$ be a quasi-Fuchsian representation with Ahlfors--Bers coordinates (i.e. conformal structures at infinity) $(\sigma^+,\sigma^-)\in \mathcal{T}(S)\times \mathcal{T}(S)$. Given a closed curve $\gamma\subset S$, let $\ell_{\sigma^+}(\gamma)$, resp. $\ell_{\sigma^-}(\gamma)$, denote the length of the geodesic in the homotopy class of $\gamma$ with respect to the metric $\sigma^+$, resp. $\sigma^-$. Let also $\ell_\rho(\gamma)$ denote the length of the geodesic of $\Hp^3/\rho(\pi_1(S))$ in the homotopy class defined by $\gamma$.

\begin{lemma}	\label{ahlfors}
We have: $\ell_\rho(\gamma)\leq 2\inf\{\ell_{\sigma^+}(\gamma),\ell_{\sigma^-}(\gamma)\}$.
\end{lemma}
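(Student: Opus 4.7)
The strategy is to produce, for each essential closed curve $\gamma\subset S$, a representative loop inside $\Hp^3/\rho(\pi_1(S))$ whose length is at most $2\ell_{\sigma^+}(\gamma)$ (and analogously at most $2\ell_{\sigma^-}(\gamma)$). Since $\gamma^*_\rho$ is length-minimizing in its free homotopy class, the inequality $\ell_\rho(\gamma)\leq 2\inf\{\ell_{\sigma^+}(\gamma),\ell_{\sigma^-}(\gamma)\}$ will follow immediately.

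Since $\rho$ is quasi-Fuchsian, the domain of discontinuity $\Omega_\rho\subset \hat\C$ splits as $\Omega_\rho=\Omega^+\sqcup\Omega^-$ with each $\Omega^\pm$ an invariant simply connected disk. The Poincar\'e metric on $\Omega^\pm$ descends to a hyperbolic metric on $\Omega^\pm/\rho(\pi_1(S))$ representing the Ahlfors--Bers coordinate $\sigma^\pm$; in particular the $\sigma^+$-geodesic representative of $\gamma$ is a closed curve on $\Omega^+/\rho(\pi_1(S))$ of length exactly $\ell_{\sigma^+}(\gamma)$. To push this curve into $\Hp^3/\rho(\pi_1(S))$, I would use the \emph{nearest point retraction} $r:\Omega_\rho\to\partial C(\Lambda_\rho)$ onto the boundary of the convex hull of the limit set. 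This map is $\rho$-equivariant, continuous, and sends each $\Omega^\pm$ to the component $\partial^\pm C(\Lambda_\rho)$ of the boundary of the convex hull facing it; passing to the quotient it gives a map $\bar r:\Omega^\pm/\rho(\pi_1(S))\to \partial^\pm C_\rho$, where $C_\rho\subset \Hp^3/\rho(\pi_1(S))$ is the convex core.

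The key analytic ingredient is that $r$ is $2$-Lipschitz from the Poincar\'e metric on $\Omega^\pm$ to the induced path metric on $\partial^\pm C(\Lambda_\rho)$. This is the Epstein--Marden estimate, proved by comparing, at each point $z\in\Omega^\pm$, the Poincar\'e density with the distance from $z$ to $\partial C(\Lambda_\rho)$ via the geometry of round disks tangent from outside the convex hull; concretely one shows $\rho_\Omega(z)|dz|\geq \tfrac12 ds_{\partial C}$ along $r$. Granting this, the image $\bar r(\gamma^+)$ of the $\sigma^+$-geodesic representative $\gamma^+$ is a closed loop on $\partial^+C_\rho\subset \Hp^3/\rho(\pi_1(S))$, freely homotopic to $\gamma$, of length at most $2\ell_{\sigma^+}(\gamma)$. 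The same argument on the other side gives the bound by $2\ell_{\sigma^-}(\gamma)$, and the lemma follows.

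The main obstacle is the $2$-Lipschitz property itself, which is the only non-formal input; everything else is a direct application of the convex-core/nearest-point machinery. An equivalent route would be to realize $\gamma$ directly as a closed geodesic on the pleated boundary $\partial^\pm C_\rho$ equipped with its intrinsic hyperbolic structure $m^\pm$, note that $\ell_\rho(\gamma)\leq \ell_{m^\pm}(\gamma)$ since $\partial^\pm C_\rho$ sits inside $\Hp^3/\rho(\pi_1(S))$ as a CAT$(-1)$ surface, and then invoke the same Epstein--Marden comparison $\ell_{m^\pm}(\gamma)\leq 2\ell_{\sigma^\pm}(\gamma)$; this phrasing keeps the $2$-Lipschitz statement localized to curves and avoids discussing the retraction on surface level.
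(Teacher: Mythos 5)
Your proof is correct, but it takes a genuinely different route from the one the chapter relies on. The chapter does not prove Lemma \ref{ahlfors} at all: it refers to \cite[Theorem 3]{bers:boundaries} and to \cite[Lemma 5.1.1]{otal:fibre} (following Ahlfors \cite{ahlfors:finitely:generated}), and those arguments stay entirely in the conformal boundary: for instance one compares the annular cover $\Omega^+/\langle\rho(\gamma)\rangle$ of $\Omega^+/\rho(\pi_1(S))$ with the torus $(\hat\C\setminus\mathrm{Fix}(\rho(\gamma)))/\langle\rho(\gamma)\rangle$; an extremal length estimate bounds its modulus by $2\pi\ell_\rho(\gamma)/|\mathcal{L}|^2$, where $\mathcal{L}$ is the complex translation length, so its core geodesic has length at least $\ell_\rho(\gamma)/2$, and since the covering is a local isometry for the Poincar\'e metrics this yields $\ell_{\sigma^\pm}(\gamma)\geq \ell_\rho(\gamma)/2$. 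You instead factor the inequality through the convex core, $\ell_\rho(\gamma)\leq \ell_{m^\pm}(\gamma)\leq 2\ell_{\sigma^\pm}(\gamma)$, the second inequality being the Epstein--Marden theorem that the nearest point retraction from a simply connected invariant component (Poincar\'e metric) to the facing boundary face of the convex hull (intrinsic metric) is $2$-Lipschitz. That input is a genuine theorem and the constant $2$ is sharp (the slit plane already realizes it along the ray opposite the slit), so quoting it is legitimate; just be aware that your parenthetical justification is essentially a restatement of the estimate rather than a proof (one needs the Koebe quarter theorem together with a careful bound on the height of the retraction point), and that it should not be confused with Thurston's $K=2$ conjecture on quasiconformal equivalence of $\Omega$ and the dome, which is a different statement and fails equivariantly. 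As for what each approach buys: the modulus argument is elementary, self-contained and historically available, using nothing three-dimensional; your argument invokes later and heavier machinery, but it makes the role of simple connectivity of $\Omega^\pm$ completely transparent and ties the lemma to the convex-core viewpoint used throughout the chapter---indeed it is exactly this hypothesis that breaks down for compressible boundary, where Lemma \ref{ahlfors} must be replaced by Theorem \ref{canahl} or by Sugawa's inequality (\ref{sugawa}).
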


This statement, which is a reformulation of \cite[Theorem 3]{bers:boundaries}, follows also from the work of Ahlfors \cite{ahlfors:finitely:generated} (see \cite[Lemma 5.1.1]{otal:fibre}).\\

If a sequence $\sigma^\pm_i$ converges to a lamination $\mu^\pm$, then by Theorem \ref{thurston comp}, there is a sequence of measured laminations $\mu_i^\pm$ converging projectively to $\mu^\pm$ such that $\ell_{\sigma_i^\pm}(\mu_i)$ remains bounded. Since weighted multi-curves are dense in $\ML (S)$, we can assume that $\mu_i^\pm$ is a multi-curve for any $i$. Combining this observation with Lemma \ref{ahlfors}, Theorem \ref{limit double} reduces to the following generalization of Theorem \ref{baby dl curves}:

\begin{theorem}		\label{dl curves}
Let $S$ be a closed surface and let $\mu^+,\mu^-$ be two measured geodesic laminations that bind $S$. Let $\{\mu^+_i\},\{\mu^-_i\}\subset\ML(S)\times\ML(S)$ be two sequences of weighted multi-curves converging respectively to $\mu^+$ and $\mu^-$. Then any sequence $\{\rho_i\}\subset AH(\pi_1(S))$ such that $\{\ell_{\rho_i}(\mu^+_i)\}$ and $\{\ell_{\rho_i}(\mu^-_i)\}$ are bounded has a converging subsequence.
\end{theorem}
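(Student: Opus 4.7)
My plan is to proceed by contradiction using the Culler--Morgan--Shalen / Bestvina--Paulin tree compactification outlined in \textsection\ref{compactification:tree}. Assume no subsequence of $\{\rho_i\}$ converges in $AH(\pi_1(S))$. After passing to a subsequence and conjugating, one obtains rescaling scalars $\eps_i \to 0$ and a nontrivial, minimal, small isometric action of $\pi_1(S)$ on a real tree $\mathcal{T}$ such that
\[
\eps_i \ell_{\rho_i}(g) \longrightarrow \delta_{\mathcal{T}}(g) \qquad \text{for every } g \in \pi_1(S).
\]
By Skora's theorem, this action is dual to a nonzero measured geodesic lamination $\nu \in \ML(S)$, and $\delta_{\mathcal{T}}(c) = i(\nu, c)$ for every simple closed curve $c$; by homogeneity and additivity, the identity extends to weighted multi-curves.

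The binding hypothesis should then force a contradiction. Since $\mu^+$ and $\mu^-$ bind $S$ and $\nu \neq 0$, at least one of $i(\mu^+, \nu)$ and $i(\mu^-, \nu)$ is strictly positive; say $i(\mu^+, \nu) > 0$. Continuity of the intersection number on $\ML(S)$ gives $i(\mu_i^+, \nu) \to i(\mu^+, \nu) > 0$, and I aim to establish
\[
\eps_i \ell_{\rho_i}(\mu_i^+) \longrightarrow i(\nu, \mu^+) > 0.
\]
Combined with $\eps_i \to 0$, this forces $\ell_{\rho_i}(\mu_i^+) \to \infty$, contradicting the hypothesis that $\{\ell_{\rho_i}(\mu_i^+)\}$ is bounded. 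The symmetric conclusion in the case $i(\mu^-, \nu) > 0$ is identical.

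The main obstacle is precisely the last convergence statement: the Bestvina--Paulin convergence $\eps_i \ell_{\rho_i}(c) \to i(\nu, c)$ is pointwise in a \emph{fixed} curve $c$, whereas the multi-curves $\mu_i^+$ themselves vary with $i$. To bridge this gap, I would upgrade the convergence to be uniform on compact subsets of $\ML(S)$. One natural route is to fix an auxiliary hyperbolic metric $\sigma_0$ on $S$ and use a uniform domination $\eps_i \ell_{\rho_i}(\mu) \leq C\, \ell_{\sigma_0}(\mu)$ (after normalising relative to a fixed generating set of $\pi_1(S)$) to get equicontinuity of the rescaled length functionals $\mu \mapsto \eps_i \ell_{\rho_i}(\mu)$ on $\ML(S)$; an Arzel\`a--Ascoli argument then promotes the pointwise convergence on the dense set of weighted simple closed curves to uniform convergence on the relatively compact subset $\{\mu_i^+\} \cup \{\mu^+\}$. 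An alternative, more geometric route is to work directly with the equivariant pointed Gromov--Hausdorff convergence $\eps_i \Hp^3 \to \mathcal{T}$: each weighted multi-curve is supported on finitely many axes whose rescaled translation lengths converge to the corresponding translation lengths in $\mathcal{T}$, with an error term controlled uniformly by the convergence $\mu_i^+ \to \mu^+$ in $\ML(S)$. Either path yields the uniform estimate needed to close the argument.
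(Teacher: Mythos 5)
Your opening moves (Culler--Morgan--Shalen limit, smallness, Skora's theorem, $\delta_{\mathcal{T}}(c)=i(\nu,c)$, and the observation that binding forces $i(\mu^+,\nu)>0$, say) are the correct start of Otal's route, but there is a genuine gap at the decisive step: passing from the fixed-curve convergence $\eps_i\ell_{\rho_i}(c)\to i(\nu,c)$ to a positive lower bound for $\eps_i\ell_{\rho_i}(\mu_i^+)$ when the multi-curves themselves vary with $i$. This is not a limit-interchange technicality that can be obtained softly; it is the hard content of the theorem. Your Arzel\`a--Ascoli device does not work as described: the domination $\eps_i\ell_{\rho_i}(\mu)\leq C\,\ell_{\sigma_0}(\mu)$ gives only equiboundedness, and upper bounds on three-dimensional lengths are the easy direction (compare Lemma \ref{ahlfors}); it yields no equicontinuity of $\mu\mapsto\eps_i\ell_{\rho_i}(\mu)$ on $\ML(S)$, and what you need is a \emph{uniform lower} bound on lengths in the quasi-Fuchsian manifolds, which no soft compactness argument produces. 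Your second device fails for a parallel reason: equivariant pointed Gromov--Hausdorff convergence controls finitely many group elements at a time, while the components of $\mu_i^+$ necessarily have word length tending to infinity as $\mu_i^+\to\mu^+$; the claim that the error is ``controlled uniformly by the convergence in $\ML(S)$'' is precisely the statement requiring proof. When this step is carried out honestly in the tree picture, it is Otal's argument: one constructs a train track carrying the approximating curves that is \emph{realized} in $\mathcal{T}$ and subdivides it so that images of carried curves become piecewise geodesics with long segments and incidence angles close to $\pi$, giving the uniform estimate of Theorem \ref{contin}. That machinery needs the Hausdorff limit of $\{\mu^\pm_i\}$ to be realized in the tree, which is exactly why Otal's version of the statement (Theorem \ref{dl curves otal}, Remark \ref{almost max}) carries minimality and Hausdorff-convergence hypotheses that the statement you are proving does not have; with only measure convergence $\mu_i^+\to\mu^+$, the Hausdorff limit may contain unrealized leaves or closed curves and your asserted convergence $\eps_i\ell_{\rho_i}(\mu_i^+)\to i(\nu,\mu^+)$ does not follow from the pointwise one.

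For comparison, the paper proves Theorem \ref{dl curves} in the stated generality by Thurston's route, not through trees at all: one finds a closed curve that is not short in any $N_i$, extends it to a maximal lamination $\lambda$, takes the pleated surfaces $f_{\lambda,i}:S\to N_i$, and applies the Efficiency of Pleated Surfaces (Theorem \ref{efficiency}, resting on the Uniform Injectivity Theorem \ref{uniform injectivity}) to transfer the bounds on $\ell_{\rho_i}(\mu_i^\pm)$ to the induced hyperbolic metrics on $S$; the conclusion then follows from the two-dimensional statement (Theorem \ref{baby dl curves}), where the uniform two-sided inequality (\ref{thucom}) of Theorem \ref{thurston comp} supplies exactly the uniformity over varying $\mu_i^\pm$ that your argument tries to get for free. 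So either follow that pleated-surface route, or add Otal's hypotheses and the train-track realization argument; as written, the key uniform lower bound is asserted rather than proved.
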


\subsection{Thurston's arguments: efficiency of pleated surfaces}	\label{sec:thurston}

Thurston's approach to prove the Double limit Theorem is to project the $3$-manifold to an immersed surface especially constructed so that the induced distortion on the metric is controlled and thus reduce the problem to the $2$-dimensional case. This is done through the \lq\lq Efficiency of Pleated Surfaces" Theorem which allows one to estimate the lengths of geodesics in the $3$-manifold based on the length of their representatives on some surfaces specifically immersed in it. These surfaces are {\em pleated surfaces}, namely the immersions have totally geodesic image except on a geodesic lamination called the {\em pleating locus}. Such a pleated surface is locally ruled and the induced metric is hyperbolic. For example, let us pick a finite maximal lamination $\lambda\in\ML(S)$, and a representation $\rho\in \mathrm{int}(AH(\pi_1(S)))$. A surface $f_\lambda:S\to N_\rho=\Hp^3/\rho(\pi_1(S))$ pleated along $\lambda$ always exists, it maps the leaves of $\lambda$ to geodesics and the complementary regions to geodesic triangles.

\begin{theorem}[Efficiency of pleated surfaces, {\cite[Theorem 3.3]{thurston:hypII}}]	\label{efficiency}
Let $S$ be a closed surface. For any $\eps>0$, there is a constant $C<\infty$ such that the following holds:
\begin{enumerate}[]
\item Let $\lambda$ be any finite maximal lamination on $S$.
\item Let $\rho$ be any element of $\mathrm{int}(AH(\pi_1(S)))$ such that no closed leaf of $\lambda$ has length less than $\eps$ in $N_\rho=\Hp^3/\rho(\pi_1(S))$, and let $f_\lambda:S\to N_\rho$ be a surface which is pleated along $\lambda$.
\item Let $\mu\in ML(S)$ be a measured geodesic lamination.
\end{enumerate}
Then $$\ell_\rho(\mu)\leq \ell_{f_\lambda}(\mu)\leq \ell_\rho(\mu)+Ca(\lambda,\mu).$$

\end{theorem}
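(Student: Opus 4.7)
The lower bound $\ell_\rho(\mu)\leq \ell_{f_\lambda}(\mu)$ is essentially formal. Equipped with its induced hyperbolic metric $\sigma_{f_\lambda}$ on $S$, the pleated surface $f_\lambda:(S,\sigma_{f_\lambda})\to N_\rho$ is an isometric immersion, so for any weighted simple closed curve $\gamma$ the geodesic representative of $\gamma$ in $(S,\sigma_{f_\lambda})$ maps to a rectifiable loop in $N_\rho$ freely homotopic to $\rho(\gamma)$; hence $\ell_\rho(\gamma)\leq \ell_{f_\lambda}(\gamma)$. By density of weighted multicurves in $\ML(S)$ and continuity of both length functions, this passes to the limit for every $\mu\in\ML(S)$.

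For the upper bound, the same density/continuity argument reduces matters to a simple closed geodesic $\mu=c$ in minimal position with $\lambda$, where $a(\lambda,c)$ becomes a finite count of how $c$ visits the complementary triangles of $S\setminus\lambda$ (together with its spiralling along closed leaves). Lift to the universal cover: pick a lift $\tilde c$ of the $(S,\sigma_{f_\lambda})$-geodesic and lift $f_\lambda$ to $\tilde f_\lambda:\tilde S\to\Hp^3$. The preimage $\tilde\lambda$ slices $\tilde c$ into arcs $\alpha_1,\alpha_2,\ldots$, each lying in a single complementary triangle, and their images form a bi-infinite piecewise-geodesic path $\tilde P\subset\Hp^3$ whose bends all occur on lifts of leaves of $\lambda$. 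One period of $\tilde P$ under $\rho(c)$ has length exactly $\ell_{f_\lambda}(c)$, while the straight chord joining its endpoints is a lift of the $\rho$-geodesic representative and has length $\ell_\rho(c)$, so the inequality reduces to bounding, per period, the length of $\tilde P$ minus the length of that chord by $C\cdot a(\lambda,c)$.

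My plan for this bound is a local straightening argument: replace two consecutive arcs $\tilde f_\lambda(\alpha_i),\tilde f_\lambda(\alpha_{i+1})$ by the $\Hp^3$-geodesic joining their free endpoints, and estimate the gain. When the bend across the shared leaf of $\tilde\lambda$ is not too sharp and the adjacent arcs are not too long, a direct hyperbolic-geometry estimate inside the ``book'' formed by two totally geodesic half-planes meeting along a geodesic yields a uniform $O(1)$ savings per crossing; summed over one period this gives a bound of the desired shape $C\cdot(\text{number of crossings})\sim C\cdot a(\lambda,c)$. Once the estimate is proved for simple closed curves with a constant independent of $c$, continuity and density extend it to general $\mu\in\ML(S)$.

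The main obstacle, and the only place where the hypothesis $\eps$ truly enters, is the spiralling regime: when $c$ runs alongside a closed leaf $\ell$ of $\lambda$, the arcs $\alpha_i$ in the spiralling triangles can be long, the cumulated bending along the nearby leaves that spiral onto $\ell$ can be large, and the chord in $\Hp^3$ could short-cut by an a priori unbounded amount if $\ell^*$ were allowed to become arbitrarily short. The hypothesis $\ell_\rho(\ell^*)\geq\eps$ prevents this: the Margulis tube around $\ell^*$ has radius bounded in terms of $\eps$, and an explicit computation inside that tube caps the contribution of each spiral by a constant $C(\eps)$, which can be charged to a single unit of $a(\lambda,c)$. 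Making this tube estimate uniform in $\rho$ and splicing it with the triangle-by-triangle comparison in the non-spiralling regions completes the proof of $\ell_{f_\lambda}(\mu)-\ell_\rho(\mu)\leq C\cdot a(\lambda,\mu)$.
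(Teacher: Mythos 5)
Your lower bound and the reduction to simple closed curves are fine, but the core of your upper-bound argument has two serious problems. First, the accounting does not work: the arcs $\alpha_1,\alpha_2,\dots$ into which $\tilde\lambda$ cuts $\tilde c$ are indexed by the intersections of $c$ with $\lambda$, and this number is \emph{not} comparable to $a(\lambda,c)$ --- as soon as $c$ crosses a closed leaf of the finite maximal lamination $\lambda$ onto which other leaves spiral, $c$ meets those spiralling leaves infinitely many times per period, while $a(\lambda,c)$ (the number of changes of the direction of asymptoticity) stays finite. So even a genuine uniform $O(1)$ bound \lq\lq per crossing" would give $C\cdot i(c,\lambda)$-type estimates, not $C\,a(\lambda,c)$; this is precisely why Thurston does not decompose $d$ along the complementary triangles but instead approximates it by a piecewise geodesic $p$ whose segments run \emph{inside} leaves of $\lambda$, with exactly $a(\lambda,d)$ small jumps, and then compares $f_\lambda(p)$ to $d^*$ through a simplicial annulus and a Gauss--Bonnet area count.

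Second, and more fundamentally, the step \lq\lq when the bend is not too sharp and the adjacent arcs are not too long, straightening saves only $O(1)$" assumes exactly the control that is the whole difficulty of the theorem. Nothing in the hypotheses bounds the exterior angles of your piecewise geodesic $\tilde P$ away from $\pi$, nor gives lower bounds on the lengths of the $\alpha_i$: the pleated surface can fold back on itself accordion-fashion along non-compact leaves of $\tilde\lambda$, with consecutive plaques mapped to nearly coincident half-planes, and this can happen far from any Margulis tube around a closed leaf of $\lambda$, so the hypothesis $\ell_\rho(\text{closed leaves of }\lambda)\geq\eps$ plus a tube computation does not exclude it. In that regime the per-bend savings of your local straightening is unbounded and the total excess $\ell_{f_\lambda}(c)-\ell_\rho(c)$ cannot be controlled by purely local hyperbolic trigonometry. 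The ingredient that rules out such folding, uniformly in $\rho$ and in the pleated surface, is Thurston's Uniform Injectivity Theorem (Theorem \ref{uniform injectivity}): points of the pleating locus with injectivity radius at least $\eps$ that are far apart in $S$ cannot have images too close in $\PP N_\rho$, and this is proved by a global compactness/limiting argument for pleated surfaces inducing discrete faithful representations --- it is here, not in a tube estimate, that the constant acquires its dependence on $\eps$. Your proposal contains no substitute for this global statement, so the key step of the upper bound is unjustified.
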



We will describe the {\em alternation number} $a ( \lambda ,.)$ in the sketch of the proof of Theorem \ref{efficiency}. 
For the proof of the Double Limit Theorem we only need to know that $a(\lambda ,.)$ is finite and continuous (\cite[Proposition 3.2]{thurston:hypI}).\\

Before describing the proof of Theorem \ref{efficiency}, let us explain how it is used to conclude the proof of Theorem \ref{dl curves} (and hence of the Double Limit Theorem).
First, Thurston produces a curve $c$  (actually infinitely many such curves, see \cite[Corollary 4.3]{thurston:hypII}) which is not too short in any of the manifolds $N_i=\Hp^3/\rho_i(\pi_1(M))$ (up to extracting a subsequence), i.e. $\ell_{\rho_i}(c)\geq\eps$ for all $i$ and a constant $\eps$ that depends only on $S$. Adding spiraling leaves, $c$ can easily be extended  to a maximal lamination $\lambda$ with no closed leaf except for $c$. Then there is a unique pleated surface $f_{\lambda,i}: S\to N_i$ which maps each component of $S-\lambda$ to a geodesic triangle. Applying Theorem \ref{efficiency}, we get that both $\{\ell_{f_{\lambda,i}}(\mu^+_n)\}$ and $\{\ell_{f_{\lambda,i}}(\mu^-_n)\}$ are bounded (see also \cite[Theorem 4.4]{thurston:hypII}). By Theorem \ref{baby dl curves}, the metric induced by $f_{\lambda,i}$ stays in a compact set. It follows that for any closed curve $d$ on $S$, $\ell_{\rho_i}(d)\leq \ell_{f_{\lambda,i}}(d)$ is bounded and that the sequence $\{\rho_i\}$ has a converging subsequence.

\begin{proof}[Sketch of the proof of Theorem \ref{efficiency}]
These inequalities need only be proved for simple closed curves. Then they holds for weighted multicurves and extend to measured lamination by continuity of the length function (\cite{brock:continuity}) and of the alternation number (\cite[Proposition 3.2]{thurston:hypI}). The left hand inequality is obvious so we focus on the right hand one.

Let $d\subset S$ be a closed geodesic for the metric induced by $f_\lambda$. Approximate $d$ on $S$ by a piecewise geodesic curve $p$ made up of segments in $\lambda$ and small jumps between those segments. To ensure that the jumps are small, we pick successive segments in asymptotic leaves of $\lambda$, and to have a control on the number of segments we pick non-successive segments in non-asymptotic leaves. The number of segments is then the number $a(\lambda,d)$ of times the direction of asymptoticity of the leaves of $\lambda$ changes as one goes around $d$. 

Next, consider a simplicial annulus $A$ joining $f_\lambda(p)$ to the geodesic $d^*\subset N_\rho$ in the homotopy class of $f_\lambda(d)$ and fix $\delta>0$. From each point $x\in f_\lambda(p)$ draw in $A$ an arc $A_x$ orthogonal to $f_\lambda(p)$ which either has length $\delta$ or hit $\partial A$ before reaching that length. By the Gauss--Bonnet formula, the contribution to the length of $f_\lambda(p)$ of the points $x$ for which $A_x$ has length $\delta$, $A_x$ hits $d^*$ or $x$ is close to a corner is at most $\ell_N(d)+O(a(\lambda,d))$. For the remaining points, $A_x$ is a shortcut in $N$, and the Uniform Injectivity Theorem  (Theorem \ref{uniform injectivity} below) says that there is a shortcut in $S$ joining the preimage of the endpoints of $A_x$. It is not difficult to ensure in the construction of $p$ that there are not too many such shortcuts. Thus we get $C$ depending only on $S$ and $\eps$ such that $\ell_{f_\lambda}(d)\leq \ell_{f_\lambda}(p)\leq \ell_N(d)+Ca(\lambda,d)$.

To have a complete overview of the proof, it remains to examine the Uniform Injectivity Theorem. Given a differentiable manifold $N$, let $\PP N$ denote the tangent line bundle.

\begin{theorem}[Uniform Injectivity, {\cite[Theorem 5.7]{thurston:hypI}}]   \label{uniform injectivity}
Let $\eps_0>0$ and let $S$ be a closed surface. Given a representation $\rho\in AH(\pi_1(S))$, a pleated map $f:S\to N_\rho=\Hp^3/\rho(\pi_1(S))$ which induces $\rho$ and a lamination $\lambda\subset S$ which is mapped geodesically by $f$, denote by $g:\lambda\to \PP M_\rho$ the canonical lifting. There is $\delta_0>0$ depending only on $\eps_0$ and $S$ such that  for any two points $x$ and $y\in\lambda$ whose injectivity radii are greater than $\eps_0$, if $d_f(x,y)\geq\eps_0$ then $d_{N_\rho}(f(x),f(y))\geq\delta_0$. 
\end{theorem}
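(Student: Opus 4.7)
The plan is to argue by contradiction, using a pointed Gromov--Hausdorff compactness argument to produce a limit configuration that is ruled out by the rigidity of pleated maps together with the discreteness of the limiting representation.

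Suppose the statement fails for some $\eps_0>0$. One obtains a sequence $\rho_n\in AH(\pi_1(S))$ with pleated maps $f_n\colon S\to N_n=\Hp^3/\rho_n(\pi_1(S))$, laminations $\lambda_n\subset S$ mapped geodesically, and points $x_n,y_n\in\lambda_n$ whose injectivity radii in the pleated metric $f_n^*g_{\Hp^3}$ exceed $\eps_0$ and whose canonical lifts to $\PP S$ are at distance at least $\eps_0$, yet with $d_{\PP N_n}(g_n(x_n),g_n(y_n))\to 0$. The injectivity radius bound combined with the Margulis lemma gives uniform geometric control on $\eps_0/2$-balls around $x_n$ and $y_n$ in the pulled-back hyperbolic surface; passing to a subsequence, these balls converge in the pointed Gromov--Hausdorff sense to a limit hyperbolic piece carrying a limit geodesic lamination $\lambda_\infty$ that contains $x_\infty\ne y_\infty$ with distinct projective tangent lines. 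On the target side, lift $f_n$ to $\widetilde f_n\colon\Hp^2\to\Hp^3$ so that a chosen lift of $x_n$ is a fixed basepoint $O$. Since pleated maps are $1$-Lipschitz and the target is always $\Hp^3$, a diagonal subsequence of the restrictions of $\widetilde f_n$ to a bounded neighborhood of $\widetilde x_n,\widetilde y_n$ yields a limit pleated map $\widetilde f_\infty\colon U\to\Hp^3$ with $\widetilde f_\infty(\widetilde x_\infty)=\widetilde f_\infty(\widetilde y_\infty)=O$ and with coincident tangent lines along the limit lamination $\widetilde\lambda_\infty$.

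To finish, one exploits that $\widetilde f_\infty$ is isometric along each leaf: the distinct leaves $\widetilde\ell_x,\widetilde\ell_y\subset\Hp^2$ through $\widetilde x_\infty,\widetilde y_\infty$ map onto a common oriented geodesic $L\subset\Hp^3$ with matching parametrizations. A short transverse arc $\alpha\subset\Hp^2$ from $\widetilde\ell_x$ to $\widetilde\ell_y$ is sent by $\widetilde f_\infty$ to a piecewise geodesic path in $\Hp^3$ bent only along $\widetilde\lambda_\infty$, with endpoints on $L$; closing it up with a short segment of $L$ produces, for large $n$, an arbitrarily short loop in $N_n$ based near $f_n(x_n)$. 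Because the corresponding arc in $S$ has length at least $\eps_0$ in the pleated metric, this loop represents a nontrivial element of $\pi_1(S)=\pi_1(N_n)$. The Margulis lemma applied at $f_n(x_n)$, whose injectivity radius in $N_n$ is bounded below in terms of $\eps_0$ and $\chi(S)$ (via the $1$-Lipschitz property of $f_n$), then forbids such an element from having arbitrarily small translation length, yielding the contradiction.

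The hard part is the target-side convergence: the pleating lamination of $f_n$ can be very intricate outside the thick part, and one must show that on a uniform neighborhood of the base lifts the restrictions of $\widetilde f_n$ have enough regularity, beyond the bare $1$-Lipschitz bound, to ensure the limit is pleated along the Hausdorff limit of $\widetilde\lambda_n$ rather than some more degenerate object. A related subtlety is translating the injectivity radius lower bound on $S$ into one at $f(x)\in N_\rho$: this rests on the incompressibility of pleated surfaces together with discreteness of $\rho_n$, and it is what makes the Margulis-based argument in the previous paragraph effective.
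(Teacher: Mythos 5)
Your compactness skeleton is the same as Thurston's (and the survey's) route to uniformity: extract a geometric limit of the pleated maps using the $\eps_0$ lower bound on injectivity radius in the induced metric, and derive a contradiction from the limiting configuration. The proofs part ways at the endgame, and that is where your argument has a genuine gap. In the paper, the limiting configuration (two points of the pleating lamination with the same image in the projective tangent bundle, hence two leaves mapped to the same geodesic) is ruled out by the separately proved injectivity theorem (Thurston's Theorems 5.5--5.6), whose proof is global and topological: the closures of the two leaves have the same image, which produces two non-homotopic simple closed curves on $S$ with the same geodesic image, contradicting that $f$ induces the discrete faithful representation $\rho$. Your replacement of this step by a short-loop-plus-Margulis argument does not go through. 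There is no reason a \emph{short} transverse arc $\alpha$ from $\widetilde\ell_x$ to $\widetilde\ell_y$ exists: the hypothesis $d_f(x,y)\geq\eps_0$ places $x$ and $y$ far apart in the surface, and the two leaves through them may stay far apart everywhere, so the loop you close up along $L$ has length controlled by $\ell(\alpha)$, not by the closeness of $g_n(x_n)$ and $g_n(y_n)$ in $\PP N_n$; it need not be short, and its nontriviality in $\pi_1$ is asserted rather than proved.

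The second defect is the Margulis step itself: you bound the injectivity radius of $N_n$ at $f_n(x_n)$ from below ``via the $1$-Lipschitz property of $f_n$'', but that property only decreases lengths, so it transfers thinness in the wrong direction: a lower bound on the injectivity radius of $x$ in the pleated metric gives no lower bound at $f(x)$, and pleated surfaces do carry thick points of $S$ deep into Margulis tubes of $N_\rho$. The statement that an incompressible pleated surface cannot do this beyond a controlled amount is itself a nontrivial lemma whose standard proofs rely on uniform injectivity or efficiency, so invoking it here is circular; you flag it as a ``related subtlety'', but it is the heart of the matter. More fundamentally, the local data surviving in your limit (the map is $1$-Lipschitz, geodesic along leaves, the groups are discrete, Margulis holds) cannot suffice in principle: all of it is present for compressible pleated surfaces, for which uniform injectivity fails (as the survey recalls in Section \ref{sec:compressible}), so any correct argument must exploit the global faithfulness/incompressibility input exactly as the injectivity theorem does. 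To repair the proof, keep your compactness step (handling also the case $d_S(x_n,y_n)\to\infty$, where the two basepoints give two pointed limits into the same geometric limit) and then conclude by applying the injectivity theorem for the limiting pleated map, proved by the closure-of-leaves argument sketched in the paper, rather than by a local Margulis argument.
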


The uniformity comes from a limit argument. Thurston first shows that $g$ is injective (\cite[Theorems 5.5 and 5.6]{thurston:hypI}) by contradiction. A non injective map $g$ would map two leaves of $\lambda$ to the same geodesic and hence their closures to the same set. This would produce two non-homotopic simple closed curves $c_1,c_2\subset S$ with the same image under $f$. This would contradict the assumption that $f$ induces $\rho$. From the injectivity he then goes to the uniform injectivity by establishing the compactness of pleated surfaces (in the appropriate topology).

\end{proof}


\subsection{Otal's proof: real trees and $\delta$-realization of train tracks}   \label{otal}

In his book on Thurston's Hyperbolization Theorem for manifolds which fiber over the circle, Otal introduces a different strategy to prove the Double Limit Theorem. It goes by contradiction, using the Culler--Morgan--Shalen compactification by actions on real trees (the geometric approach as described in \textsection \ref{compactification:tree}). The idea is to approximate geodesic laminations in $\Hp^3/\rho_i(\pi_1(S))$ by piecewise geodesic arcs with the geodesic pieces belonging to a finite set of homotopy classes which do not depend on $i$. The convergence to an action on a real tree allows us to estimate the behavior of the length of those geodesic arcs and then the behavior of geodesic laminations. These alternative arguments require an additional hypothesis: 

\begin{theorem}		\label{dl curves otal}
Let $S$ be a closed surface and let $\mu^+,\mu^-$ be two minimal measured geodesic laminations that bind $S$. Let $\{\mu^+_i,\mu^-_i\}$ be two sequences of weighted multi-curves converging in the Hausdorff topology to almost minimal laminations containing $\mu^+$ and $\mu^-$ respectively. Then any sequence $\{\rho_i\}\subset AH(\pi_1(S))$ such that $\{\ell_{\rho_i}(\mu^+_i)\}$ and $\{\ell_{\rho_i}(\mu^-_i)\}$ are bounded has a converging subsequence.
\end{theorem}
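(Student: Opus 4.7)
The plan is to argue by contradiction via the Culler--Morgan--Shalen compactification, as sketched in \textsection\ref{compactification:tree}. Suppose no subsequence of $\{\rho_i\}$ converges algebraically. Fix a finite generating set $S_0$ of $\pi_1(S)$ and let $K_i=\inf_{x\in\Hp^3}\max_{g\in S_0}d(x,\rho_i(g)x)$; by assumption $K_i\to\infty$. After conjugating so that the infimum is attained at a basepoint and rescaling the metric on $\Hp^3$ by $K_i^{-1}$, the Bestvina--Paulin--Chiswell procedure yields, after extraction, a small minimal isometric action of $\pi_1(S)$ on a real tree $T$, whose translation length function satisfies $\delta_T(g)=\lim K_i^{-1}\ell_{\rho_i}(g)$ for every $g\in\pi_1(S)$. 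By Skora's theorem this action is dual to a nontrivial measured geodesic lamination $\nu\in\ML(S)$, with $\delta_T(c)=i(\nu,c)$ for every simple closed curve $c$. Since $\mu^+$ and $\mu^-$ bind $S$, at least one of $i(\nu,\mu^+)$, $i(\nu,\mu^-)$ is strictly positive; say $i(\nu,\mu^+)>0$.

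The goal is then to contradict this by showing that the boundedness of $\{\ell_{\rho_i}(\mu_i^+)\}$ forces $i(\nu,\mu^+)=0$. This is where Otal's $\delta$-realization of train tracks enters. First I would fix a train track $\tau$ on $S$ carrying $\mu^+$ with switches in general position. The minimality of $\mu^+$ together with the Hausdorff convergence of $\mu_i^+$ to an almost minimal lamination containing $\mu^+$ ensures that, after refining $\tau$ and discarding finitely many isolated leaves, $\tau$ also carries $\mu_i^+$ for all large $i$, and the weights $\mu_i^+$ assigns to the branches of $\tau$ converge to the weights assigned by $\mu^+$. I would then construct piecewise geodesic $\pi_1(S)$-equivariant maps $\tilde f_i:\tilde S\to\Hp^3$ realizing $\tau$: each branch of $\tau$ is sent to a geodesic segment in a homotopy class that is fixed once and for all, independent of $i$.

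With this setup, $\ell_{\rho_i}(\mu_i^+)$ differs from the $\mu_i^+$-weighted sum of the $\rho_i$-lengths of the branches of $\tau$ by a uniformly bounded error coming from the switch points. Dividing by $K_i$ and passing to the limit, each branch's $K_i^{-1}$-length converges to a positive quantity in $T$ computable from the translation lengths $\delta_T$, and these quantities assemble (via the Skora duality) into the intersection pairing $i(\nu,\mu^+)$. Since the left-hand side $K_i^{-1}\ell_{\rho_i}(\mu_i^+)$ tends to zero, one concludes $i(\nu,\mu^+)=0$, contradicting the binding hypothesis.

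The main obstacle is the last paragraph: making rigorous the passage from ``bounded hyperbolic lengths of piecewise geodesic realizations of $\tau$ weighted by $\mu_i^+$'' in $\Hp^3$ to ``vanishing intersection number with $\nu$'' in $T$. The minimality of $\mu^\pm$ and the almost-minimality of the Hausdorff limits of $\mu_i^\pm$ are essential precisely to guarantee that no weight escapes onto isolated leaves during the rescaling and to certify that the limiting $\delta$-realization of $\tau$ in $T$ genuinely computes $i(\nu,\mu^+)$ rather than a smaller quantity. This is what forces the extra hypotheses in the statement, compared with Theorem~\ref{dl curves}.
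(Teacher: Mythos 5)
Your overall skeleton is the same as Otal's argument reproduced in \textsection\ref{otal} (contradiction via the Culler--Morgan--Shalen limit, Skora's theorem, binding forces $i(\nu,\mu^+)>0$, then a train track carrying $\mu^+$ and piecewise geodesic equivariant maps), but the step you state as ``$\ell_{\rho_i}(\mu_i^+)$ differs from the $\mu_i^+$-weighted sum of the $\rho_i$-lengths of the branches by a uniformly bounded error coming from the switch points'' is exactly where the real content lies, and as stated it is wrong in the direction you need. A piecewise geodesic representative always gives an \emph{upper} bound on the geodesic length; to contradict boundedness of $\ell_{\rho_i}(\mu_i^+)$ you need a \emph{lower} bound, i.e.\ you need the image paths to be uniform quasi-geodesics. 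A concatenation of long geodesic segments in $\Hp^3$ with no control on the incidence angles at the switches can backtrack and close up to a short geodesic, so the ``bounded error at switches'' claim fails. This is precisely the difficulty the paper points out (``we cannot guarantee that $F(\tilde l)$ is a quasi-geodesic since we have no control on the angle between two successive geodesic segments''), and the missing ingredient is Otal's subdivision step: replacing $\mathcal{R}$ by a subdivided track $\mathcal{R}'$ whose rectangles are mapped to long segments meeting at angles close to $\pi$, which yields the key inequality $\varepsilon_i\,\ell_{\rho_i}(c^*)\geq K\,\ell_{s_0}(c)$ for all curves carried by the track and hence $\ell_{\rho_i}(\mu_i^+)\to\infty$.

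A second gap is your choice of train track. You take an arbitrary track $\tau$ carrying $\mu^+$ and assert that each branch's rescaled length converges to a positive quantity in $\mathcal{T}$. Branches are arcs, not conjugacy classes, so their rescaled lengths are not computed by translation lengths, and for an arbitrary track the equivariant map to $\mathcal{T}$ may fold, collapsing branches to points; then your lower bound degenerates and no contradiction follows. In Otal's construction the track is built \emph{from the limit}: its unique switch is a segment $\kappa\subset|\nu|$ crossing $\mu^+$, the components of $|\mu^+|-\kappa$ are grouped by homotopy class, and the dual map $f_\nu:\Hp^2\to\mathcal{T}$ is promoted to a realization (injective on lifts of rails), which is what guarantees positive limiting lengths of the rectangles. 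This is also where the almost-minimality hypothesis is actually used (see Remark \ref{almost max}): it ensures $\mu^+_h$ is carried by a track realized in $\mathcal{T}$ with a single switch, not merely that ``no weight escapes onto isolated leaves'' as you suggest. So the proposal identifies the right framework but omits the two ideas (realization of the track via a segment of $|\nu|$, and the subdivision giving angle control and quasi-geodesicity) that constitute the proof.
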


A geodesic lamination is {\em minimal} if any leaf is dense and {\em almost minimal} if it is made up of one minimal lamination $\mu$ and finitely many leaves accumulating on $\mu$. Notice that if $\mu^+$ and $\mu^-$ have simply connected complementary regions (for example when they are stable laminations of pseudo-Anosov mapping classes), we could equivalently request that $\{\mu^+_i\}$ and $\{\mu^-_i\}$ converge projectively to projective laminations supported by $|\mu^+|$ and $|\mu^-|$. In particular Theorem \ref{dl curves otal} is sufficient for the proof of the Hyperbolization Theorem.\\

As mentioned earlier, the proof goes by contradiction. We consider a sequence $\{\rho_i\}\subset AH(\pi_1(S))$ of quasi-Fuchsian representations tending to a small minimal action of $\pi_1(S)$ on a (projective)  real tree $\mathcal{T}$. Namely, there is $\varepsilon_i\longrightarrow 0$ such that the action of $\rho_i(\pi_1(S))$ on $\eps_i\Hp^3$ tends to the action of $\pi_1(S)$ on $\mathcal{T}$. By Skora's Theorem \cite{skora:splitting}, this action is dual to a (projective) measured lamination $\nu$. Since $\mu^+$ and $\mu^-$ bind $S$, at least one crosses $\nu$, say $i(\mu^+,\nu)>0$, and denote by $\mu^+_h$ the Hausdorff limit of $\{\mu^+_i\}$. The next step in the proof consists in constructing a train track carrying $\mu^+_h$ (and hence $\mu^+_i$ for $i$ large enough), using a segment of $\nu$ that crosses $\mu^+$ as its unique switch. Before that, let us take a short break to review some definitions.

 A {\em (fattened) train track} on a compact surface $S$ is a finite family of rectangles which intersect only at their vertical sides. A connected component of the union of the vertical sides is called a {\em switch} and such switches are required to be embedded arcs.  This is a fattened version of the train tracks defined by Thurston in \cite{thurston:notes} (see also Penner and Harer \cite{harer:penner}. The rectangles come with a vertical and a horizontal foliations. To carry the metaphor further, let us call {\em rail} a line made up of horizontal fibers and {\em tie} a leaf of the vertical foliation. A geodesic lamination is {\em carried} by a train track if (up to isotopy) it lies in the train track and is transverse to the ties.

Picking a segment $\kappa\subset|\nu|$  that crosses $\mu^+$ and grouping the component of $|\mu^+|-\kappa$ by homotopy classes, Otal constructs a train track $\mathcal{R}$ carrying $\mu^+_h$ with $\kappa$ as its only switch. The fact that $\mathcal{T}$ is dual to $\nu$ naturally produces a $\pi_1(S)$-equivariant map $f_\nu:\Hp^2\to \mathcal{T}$. By construction, this map $f_\nu$ is monotonous on the preimage of the rails of $\mathcal{R}$ and not constant on any rectangle. Otal uses this observation to turn $f_\nu$ into a {\em realization} of $\mathcal{R}$, i.e. a map $f$ that is injective when restricted to a lift of a rail. Then $f$ is also a realization of any geodesic lamination $\lambda$ carried by $\mathcal{R}$, i.e. it is injective when restricted to a leaf of the preimage of $\lambda$.

Let $\tilde{\mathcal{R}}\subset\Hp^2$ be the preimage of $\mathcal{R}$ and let $\tilde\kappa\subset\tilde{\mathcal{R}}$ be a lift of $\kappa$ (the switch of $\mathcal{R}$). Recall that the action $\rho_i(\pi_1(S))$ on $\eps_i\Hp^3$ tends to the action of $\pi_1(S)$ on $\mathcal{T}$ and consider a sequence of points $p_i\in\Hp^3$ tending to $p=f(\tilde\kappa)$. Consider the $\rho_i$-equivariant map $F_i:\tilde{\mathcal{R}}\to\Hp^3$ that maps $\tilde\kappa$ to $p_i$ and each rectangle to a geodesic segment. For any rectangle $\tilde{R}$, $\eps_i\ell (F_i(\tilde{R}))$ converges to the positive length of $f(\tilde{R})$. It follows that for any geodesic $l$ carried by $R$, $F(\tilde{l})$ is made up of long geodesic segments. But we cannot guarantee that $F(\tilde{l})$ is a quasi-geodesic since we have no control on the angle between two successive geodesic segments. 

In the last step of the proof, Otal changes the train track $\mathcal{R}$ by a subdivision operation, producing a new train track $\mathcal{R}'$ carrying $\mu^+_h$ and a $\rho_i$-equivariant map $F'_i:\tilde{\mathcal{R}'}\to \Hp^3$ which maps rectangles to long segments such that the angles between two successive segments are close to $\pi$. Then for $i$ large enough and for any closed curve $c$ carried by $R$, the projection of $F'_i(\tilde{c})$ to $\Hp^3/\rho_i(\pi_1(S))$ is a quasi-geodesic and its length is close to the length of the geodesic $c_i^*\subset \Hp^3/\rho_i(\pi_1(S))$ in the same homotopy class. Thus the length of $c_i^*$ is approximated by the sum of the lengths of the images of the rectangle of $\mathcal{R}$ it goes through and we get: 
$$\varepsilon_i \ell_{\rho_i}(c_i^*)\geq K  \ell_{s_0}(c).$$
where $\ell_{s_0}$ is the length for a fixed reference hyperbolic metric on $S$, a simple way to roughly evaluate the number of rectangles through which $c$ goes, $K$ is a constant that depends only on $\mathcal{R}$ and the inequality holds for $i$ large enough and for any closed curve carried by $\mathcal{R}$.

In particular, we have $\ell_{\rho_i}(\mu^+_i)\longrightarrow\infty$ which is the desired contradiction.

\begin{remark}  \label{almost max}
The assumption that $\mu^\pm_h$ is almost minimal was used in two instances:
\begin{enumerate}[-]
\item to deduce that $\mu^+_h$ is carried by a train track $\mathcal{R}$ realized in $\mathcal{T}$ from the assumption that $\lambda$ intersects $\mu$ and
\item to construct a train track $\mathcal{R}$ with only one switch carrying $\mu^+_h$.
\end{enumerate}
The fact that $\mathcal{R}$ has only one switch simplifies the construction but removing that constraint would only add more technicalities, whereas $\mathcal{R}$ being realized (or equivalently $\mu^+_h$ being realized) is required to end up with a piecewise geodesic curve made up of long segments with incident angles close to $\pi$.

Thus we could relax the assumption on $\mu^\pm$ being almost minimal as long as we can guarantee that $\mu^+_h$ or $\mu^-_h$ is realized in any dual tree.
\end{remark}

We could also put aside Skora's Theorem and dual laminations and start from the assumption that $\mu^+$ is realized in $\mathcal{T}$. Proceeding with the same arguments from that point on leads to:

\begin{theorem}[Continuity Theorem]	\label{contin}
Let $M$ be a compact atoroidal $3$-manifold and $\{\rho_i\}\subset AH(\pi_1(M))$ be a sequence tending to a small minimal action of $\pi_1(M)$ on a real-tree $\mathcal{T}$. Let $\varepsilon_i\longrightarrow 0$ be such that $\forall g\in\pi_1(M)$, $\varepsilon_i \delta_{\rho_i}(g)\longrightarrow \delta_\mathcal{T}(g)$ and let $\mu\subset \partial M$ be a geodesic lamination which is realized in $\mathcal{T}$. Then there exists a neighborhood $\mathcal{V}(\mu)$ of $|\mu|$, and constants $K,i_0$ such that for any simple closed curve $c\subset \mathcal{V}(\mu)$ and for any $i\geq i_0$,
$$\varepsilon_i l_{\rho_i}(c^*)\geq K  l_{s_0}(c).$$
\end{theorem}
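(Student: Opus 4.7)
The plan is to adapt Otal's argument (described in \textsection\ref{otal}) essentially verbatim, replacing the use of Skora's theorem by the hypothesis that $\mu$ is realized in $\mathcal{T}$. Indeed, as noted in Remark \ref{almost max}, the assumption that $\mu^{\pm}$ is almost minimal was used only to guarantee, via Skora's theorem and duality, that $\mu^{\pm}_h$ is realized in any dual tree; once realization is taken as an hypothesis, the construction goes through with at most cosmetic modifications.

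First I would unpack the realization hypothesis: there is a $\pi_1(M)$-equivariant map $f:\widetilde{\partial M}\to\mathcal{T}$ whose restriction to each leaf of $\widetilde{|\mu|}$ is injective. Picking a small arc $\kappa$ transverse to $f(\widetilde{|\mu|})$ inside a segment of $\mathcal{T}$, I would group the components of $|\mu|$ minus the $\pi_1(M)$-orbit of $\kappa$ by homotopy class to build a train track $\mathcal{R}\subset\partial M$ carrying $\mu$, with $\kappa$ as its unique switch, and take $\mathcal{V}(\mu)$ to be a regular neighborhood of $|\mathcal{R}|$ in $\partial M$, so that every simple closed curve in $\mathcal{V}(\mu)$ is (up to isotopy) carried by $\mathcal{R}$. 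Because $f$ is injective on leaves of $\widetilde{|\mu|}$ and $\mathcal{R}$ is obtained by thickening these leaves transversely to their image, $f$ can be perturbed to a realization of $\mathcal{R}$ in $\mathcal{T}$: a map that is injective on each lift of a rail. In particular each rectangle of $\mathcal{R}$ has image of strictly positive length in $\mathcal{T}$.

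Next I would transport this picture to $\mathbb{H}^3$. Fix a sequence $p_i\in\mathbb{H}^3$ converging, in the pointed Gromov--Hausdorff sense underlying the convergence $\varepsilon_i\mathbb{H}^3\to\mathcal{T}$, to $f(\widetilde\kappa)$, and define $\rho_i$-equivariant maps $F_i:\widetilde{\mathcal{R}}\to\mathbb{H}^3$ by sending $\widetilde\kappa$ to $p_i$ and each rectangle to the geodesic segment between the corresponding images of its vertical sides. Since $\varepsilon_i\delta_{\rho_i}(g)\to\delta_{\mathcal{T}}(g)$ for every $g$ and each rectangle has positive image length in $\mathcal{T}$, for each rectangle $\widetilde R$ we have $\varepsilon_i\,\ell(F_i(\widetilde R))\to\ell(f(\widetilde R))>0$; in particular the geodesic segments produced by $F_i$ become arbitrarily long after rescaling. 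The remaining difficulty, and what I expect to be the technical heart of the proof, is to pass from long segments to a quasigeodesic by controlling the incident angles at the switch. This is Otal's subdivision step: by subdividing the switch $\kappa$ (and hence the rectangles incident to it) one produces a refined train track $\mathcal{R}'$ still carrying $\mu$, together with refined equivariant maps $F_i':\widetilde{\mathcal{R}'}\to\mathbb{H}^3$ for which the turning angle at every switch point is uniformly close to $\pi$, for all $i$ large enough. The only change from Otal's situation is that the ambient group is $\pi_1(M)$ rather than $\pi_1(S)$, but the estimates are local in $\widetilde{\mathcal{R}}$ and only use the convergence $\varepsilon_i\mathbb{H}^3\to\mathcal{T}$ and the realization of $\mathcal{R}$ in $\mathcal{T}$, which still hold.

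Once this is achieved, the conclusion is routine. For $i\geq i_0$ and any simple closed curve $c\subset\mathcal{V}(\mu)$, the curve $F_i'(\widetilde c)$ projects to a quasigeodesic in $\mathbb{H}^3/\rho_i(\pi_1(M))$ whose length is, up to a multiplicative constant, the sum of the $\mathbb{H}^3$-lengths of the rectangle images it traverses. The number of rectangles crossed by $c$ is controlled from below by $\ell_{s_0}(c)$ (each rectangle has bounded $s_0$-length), while each rectangle image has $\mathbb{H}^3$-length $\asymp\varepsilon_i^{-1}$ times a positive constant depending only on $\mathcal{R}'$ and $\mathcal{T}$. Comparing the length of the quasigeodesic with that of the geodesic $c^*$ in the same free homotopy class yields a constant $K>0$, depending only on $\mathcal{R}'$, $\mathcal{T}$ and the quasigeodesic constants, for which
$$\varepsilon_i\,\ell_{\rho_i}(c^*)\geq K\,\ell_{s_0}(c)$$
holds for all $i\geq i_0$ and all simple closed curves $c\subset\mathcal{V}(\mu)$, as required.
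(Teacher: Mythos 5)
Your proposal matches the paper's own treatment: the paper establishes Theorem \ref{contin} precisely by rerunning Otal's train-track argument from \textsection\ref{otal}, with the hypothesis that $\mu$ is realized in $\mathcal{T}$ replacing the appeal to Skora's theorem and dual laminations, which is exactly what you do. The only caveat is your insistence on a single-switch train track: for a general realized lamination $\mu$ (not almost minimal, possibly with several minimal components) one needs finitely many switches, but as Remark \ref{almost max} already notes, removing the one-switch constraint only adds technicalities and does not affect the argument.
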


\section{Manifolds with incompressible boundary}    \label{secn:incompressible}


Next, we will consider Kleinian representations of fundamental groups of $3$-manifolds with incompressible boundary, starting with acylindrical manifolds.
Let us recall that an {\em essential} disc, annulus or torus is an incompressible properly embedded disc, annulus or torus that is not boundary parallel, i.e. cannot be homotoped relative to its boundary in $\partial M$. A compact $3$-manifold is {\em atoroidal} if it does not contain any essential torus and is {\em acylindrical} if it does not contain any essential disc, torus or annulus. \\

Before discussing the compactness of $AH(acylindrical)$, let us outline the importance of acylindrical manifolds in the topology of $3$-manifolds. For this purpose, we introduce the theory of the  characteristic submanifold (or JSJ decomposition). To give a general idea let us say that the {\em characteristic submanifold} $\Sigma$ of a compact $3$-manifold with incompressible boundary is the smallest submanifold that contains all the essential tori, Klein bottles, annuli and M\"obius bands up to isotopy (a precise definition can be found in \cite{johannson:jsj} and \cite{jaco:shalen:seifert}, see also \cite[Theorem 3.8]{bonahon:3-var}). Its existence and uniqueness (up to isotopy) has been established independently by Johannson \cite{johannson:jsj} and Jaco--Shalen \cite{jaco:shalen:seifert}. We are only interested in orientable atoroidal $3$-manifolds, in which case the components of $\Sigma$ are essential $I$-bundles, solid tori and thickened tori. The solid tori and thickened tori are only required to intersect $\partial M$ along a collection of disjoint annuli and tori, which is why they are not viewed as essential $I$-bundles. The components of $M-\Sigma$ are acylindrical relative to $\partial M$, i.e. if $W$ is the closure of a component of $M-\Sigma$ and $\partial_0 W=W\cap \partial M$ then any annulus $(A,\partial A)\subset (W,\partial_0 W)$ can be homotoped in $\partial W$ relative to its boundary.
A relative version of this theory produces a characteristic submanifold relative to an incompressible subsurface $\partial_0 M\subset\partial M$ of the boundary (see \cite[\textsection IV.4.]{morgan:shalen:2}), it contains all the essential annuli $(A,\partial A)\subset (M,\partial_0 M)$. This will be especially interesting in the next section where we will consider more general $3$-manifolds since $\partial M$ is allowed to be compressible as long as $\partial_0 M$ is incompressible.

Let us draw a simple conclusion from this dense paragraph: a compact orientable atoroidal $3$-manifold with incompressible boundary is made up of $I$-bundles, (relative) acylindrical submanifolds and submanifolds with abelian fundamental groups. Since we have already studied deformations of hyperbolic $I$-bundles in the previous sections, it now seems natural to follow up with acylindrical manifolds. 

\begin{theorem:acylindrical}[$AH(\acylindrical)$ Is Compact]
If $M$ is any compact acylindrical $3$-manifold with boundary, then $AH(\pi_1(M))$ is compact.
\end{theorem:acylindrical}

This result is due to Thurston, \cite[Theorem 1.2]{thurston:hypI}, and then was proved by Morgan--Shalen, \cite[Theorem V.2.1]{morgan:shalen:3} with very distinct ideas and tools. Their overall strategies also differ: Thurston first proves Theorem \ref{acylindrical} in \cite{thurston:hypI} and later introduces new arguments (in \cite{thurston:hypIII}) to extend of the proof to a more general setting whereas Morgan and Shalen directly prove a general statement in \cite{morgan:shalen:3} and deduce Theorem \ref{acylindrical} as a special case. Both strategies still lead to comparable generalizations, which essentially state the following: for a compact atoroidal $3$-manifold $M$ with incompressible boundary, a sequence in $AH(\pi_1(M))$ can only degenerate on the fundamental group of the characteristic submanifold.\\

\subsection{Thurston's proof and generalizations: degenerating simplices and broken windows}
Following the chronological order, let us first outline Thurston's proof of Theorem \ref{acylindrical}. Consider a sequence of maps $f_i:M\to M_i=\Hp^3/\rho_i(\pi_1(M))$ mapping a fixed triangulation of $M$ minus the vertices to ideal simplices so that the restriction to the boundary is a pleated surface. We separate the simplices of the triangulation of $M$ into two families $\Delta_b$ and $\Delta_\infty$ depending on whether the geometry of $f_i$ remains bounded or goes to infinity. Thurston deduces then from the Uniform Injectivity Theorem that a neighbourhood of the interface between these two families has boundary with small area and hence with Abelian fundamental group. It follows then from topological considerations that $\Delta_b$ carries the fundamental group. Thus the sequence $\{\rho_i\}$ is bounded.
In a subsequent paper, \cite{thurston:hypIII}, Thurston uses the same argument to establish a relative compactness Theorem:

\begin{theorem}[Relative Boundedness, {\cite[Theorem 3.1]{thurston:hypIII}}]	\label{rel bound}
Let $M$ be a $3$-manifold, and $\gamma$ a doubly incompressible multicurve on $\partial M$. Then for any constant $A>0$, the subset of $AH(\pi_1(M))$ such that the total length of $\gamma$ does not exceed $A$ is compact.
\end{theorem}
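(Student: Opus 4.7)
My plan is to adapt Thurston's approach to Theorem \ref{acylindrical}, replacing full acylindricality by the weaker hypothesis that $\gamma$ is doubly incompressible and has bounded length. Given a sequence $\{\rho_i\}\subset AH(\pi_1(M))$ with $\ell_{\rho_i}(\gamma)\leq A$, I first construct pleated surfaces $g_i:\partial M\to M_i=\Hp^3/\rho_i(\pi_1(M))$ that realize $\gamma$: complete $\gamma$ to a maximal geodesic lamination $\Lambda\subset \partial M$ by spiraling additional leaves into each component of $\gamma$, and pleat $\partial M$ along $\Lambda$. The resulting surfaces have uniformly bounded area ($-2\pi\chi(\partial M)$), and the bounded length of $\gamma$ gives uniform control of the thick--thin decomposition along $\gamma$.

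Next, I fix a triangulation $\mathcal{T}$ of $M$ whose restriction to $\partial M$ is compatible with $\Lambda$, and straighten each $3$-simplex in $M_i$ using the images of the pleated boundary. After extracting a subsequence, each simplex of $\mathcal{T}$ either stays in a compact family of straight simplices of $\Hp^3$ or degenerates; call the two classes $\Delta_b$ and $\Delta_\infty$, and set $N_b=\bigcup\Delta_b$ and $N_\infty=\overline{M\setminus N_b}$. A standard Arzela--Ascoli argument applied to the straightened maps $f_i:M\to M_i$ shows that the restrictions $\rho_i|_{\pi_1(N_b)}$ converge along a subsequence. The theorem thus reduces to showing that the inclusion $N_b\hookrightarrow M$ induces a surjection on $\pi_1$.

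The main obstacle, and the geometric heart of the proof, is to control the topology of the frontier $F=N_b\cap N_\infty$. The key tool is the Uniform Injectivity Theorem \ref{uniform injectivity} applied to $g_i$ along the leaves of $\Lambda$, and in particular along $\gamma$: two points of $\Lambda$ lying in the thick part of $\partial M$ at definite $\partial M$-distance cannot be mapped arbitrarily close in $M_i$. Combined with the fact that the simplices abutting $F$ degenerate, this forces the area of $g_i^{-1}$ of a neighborhood of $F$ in $M_i$ to vanish, so that each component $F_0$ of $F$ has abelian (in fact cyclic) image under $\rho_i$ for large $i$. Such an $F_0$ is therefore an essential disc, an essential annulus, or is peripheral. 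The doubly incompressible hypothesis on $\gamma$ excludes the first two possibilities in the non-peripheral case: an essential compressing disc would compress a component of $\gamma$, and an essential annulus would meet $\gamma$ in too few points to be compatible with double incompressibility. Hence $F$ is peripheral, $N_b$ carries $\pi_1(M)$, and the extracted subsequence of $\{\rho_i\}$ converges in $AH(\pi_1(M))$.
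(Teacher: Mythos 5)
Your outline (pleat $\partial M$ along a maximal lamination containing $\gamma$, straighten a triangulation, split the simplices into $\Delta_b$ and $\Delta_\infty$, show the interface has small area and hence abelian image, conclude that $\Delta_b$ carries $\pi_1(M)$) is indeed the outline of Thurston's proof of Theorem \ref{acylindrical}, and the paper states that the proof of Theorem \ref{rel bound} follows that same outline. But there is a genuine gap at the step you call the geometric heart: you apply the Uniform Injectivity Theorem (Theorem \ref{uniform injectivity}) to the pleated surfaces $g_i:\partial M\to M_i$. That theorem is stated for a pleated map which \emph{induces} a representation $\rho\in AH(\pi_1(S))$, i.e.\ for a $\pi_1$-injective surface; in Theorem \ref{rel bound} the manifold $M$ is arbitrary and $\partial M$ may be compressible, so $\rho_i\circ (g_i)_*$ is not faithful and the theorem simply does not apply. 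Worse, the statement genuinely fails for compressible pleated surfaces: as recalled in Section \ref{sec:compressible}, when the length of a meridian tends to $0$ such surfaces do not converge in any reasonable sense, and uniform injectivity breaks down. So your claim that two points of $\Lambda$ in the thick part at definite $\partial M$-distance cannot have arbitrarily close images is exactly the assertion that requires a new proof here, not a citation.

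This is precisely where the hypotheses of the theorem must enter. In Thurston's argument, the double incompressibility of $\gamma$ together with the bound $\ell_{\rho_i}(\gamma)\leq A$ is used to establish an \emph{extended} Uniform Injectivity Theorem, in which the constant $\delta$ depends not only on $\eps_0$ and the surface but also on the doubly incompressible multicurve $\gamma$ (which must be contained in the pleating locus) and on the bound on its length; only once this relative version is in hand does the rest of the proof run as in the acylindrical case. In your proposal, double incompressibility is invoked only at the very end, to rule out essential discs and annuli among the frontier components; this misplaces the role of the hypothesis and leaves the key analytic step (vanishing area near the interface, hence abelian image of its components) unsupported. To repair the argument you would need to prove, or quote, this relative uniform injectivity statement — for instance via a compactness/geometric limit argument for pleated surfaces whose pleating locus contains a doubly incompressible multicurve of length at most $A$ — before the topological endgame you describe can be carried out.
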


We say that a multi-curve on the boundary of a compact $3$-manifold is {\em doubly incompressible} if it intersects the boundary of any essential disc or annulus (this is a special case of Thurston's original definition \cite[p. 10]{thurston:hypIII} where $S=\partial M$ and $f$ is the inclusion).

Since $\partial M$ is not assumed to be incompressible, the Uniform Injectivity Theorem may not apply under the assumptions of Theorem \ref{rel bound} (we will give more insight on this issue in the next section). To overcome this difficulty Thurston extends the Uniform Injectivity Theorem at the price of loosing some of its uniformity: the constant $\delta$ depends also on a doubly incompressible multicurve $\gamma$ that must be contained in the pleating locus and on a bound on the length of this multicurve. Once this is established, the proof of Theorem \ref{rel bound} follows the same outline as the proof of Theorem \ref{acylindrical}.\\

Thurston uses Theorem \ref{rel bound} for a final generalization of Theorem \ref{acylindrical}: the Broken Windows Only Theorem. He uses a slight variation on the characteristic submanifold made up only of $I$-bundles which he calls the {\em windows}: he does not take the solid tori and thickened tori and replace them with a collection of thickened annuli. In his usual picturesque style, Thurston derives the name from the idea that if the manifold was made of glass, the window would be the part through which one could see without distortion. He shows that for a sequence in $AH(\pi_1(M))$, degenerations can only happen on the fundamental group of the window, hence carrying the metaphor further: \lq\lq only the window breaks".

\begin{theorem}[Broken Windows Only, {\cite[Theorem 0.1]{thurston:hypIII}}]		\label{broken window}
If $\Gamma\subset\pi_1(M)$ is any subgroup which is conjugate to the fundamental group of a component of $M-\window(M)$, then the set of representations of $\Gamma$  in $\mathrm{Isom}(\Hp^3)$ induced from $AH(\pi_1(M))$ are bounded, up to conjugacy.


\end{theorem}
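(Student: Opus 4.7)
The plan is to reduce Theorem \ref{broken window} to the Relative Boundedness Theorem (Theorem \ref{rel bound}), applied separately to each component $W$ of $M - \window(M)$, viewed as a compact $3$-manifold in its own right. The window was designed precisely so that its complementary pieces behave like (relative) acylindrical manifolds, which is exactly the setting where Thurston's machinery of doubly incompressible multicurves applies.

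First I would fix a component $W$ of $M - \window(M)$ and set $\Gamma = \pi_1(W)$. By construction the window contains, up to isotopy, every essential $I$-bundle and every essential annulus of $M$, together with solid tori and thickened tori. Consequently the frontier $\partial W \cap \window(M)$ consists of essential annuli, and any essential annulus $(A,\partial A) \subset (W, W \cap \partial M)$ must be parallel into $\partial W$: in other words $W$ is acylindrical relative to $W \cap \partial M$. From this relative acylindricity one extracts a multicurve $\gamma_W \subset \partial W$ which is doubly incompressible in $W$, namely consisting of the cores of the frontier annuli $A_1,\dots,A_k$ together with further curves on $W \cap \partial M$ chosen to intersect the boundary of every essential disc and every remaining essential annulus in $W$; existence of such a $\gamma_W$ is a purely topological consequence of the relative acylindricity, in the same spirit as in \cite{thurston:hypIII}.

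The core of the proof is then to supply, for any $\rho \in AH(\pi_1(M))$, a uniform bound $\ell_\rho(\gamma_W) \leq A$ with $A$ depending only on $M$ and $\gamma_W$. Here the key point is that each component of $\gamma_W$ lies on $\partial W$ and is constrained either (i) to belong to a frontier annulus of $W$, in which case it is freely homotopic in $M$ to a curve inside $\window(M)$ where the geometry is controlled by the $I$-bundle structure, or (ii) to lie on $W \cap \partial M$, where one may realise it as part of the pleating locus of a pleated surface mapping $\partial M$ into $N_\rho$; Thurston's extension of the Uniform Injectivity Theorem permits the inclusion of a doubly incompressible multicurve in the pleating locus and propagates a bound on the multicurve's length into bounds on the ambient geometry. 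Combining these two sources of control yields the uniform bound on $\ell_\rho(\gamma_W)$.

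With the bound in hand, Theorem \ref{rel bound} applied to the manifold $W$ and the multicurve $\gamma_W$ shows that $\{\rho|_\Gamma : \rho \in AH(\pi_1(M))\}$ sits inside a compact subset of $AH(\Gamma)$, which is the desired conclusion. The main obstacle is clearly the length bound in the previous paragraph: without the window decomposition, arbitrary curves on $\partial M$ can have arbitrary length in $N_\rho$, and what makes the argument work is precisely that $\gamma_W$ has been assembled out of curves that are either trapped between $W$ and the geometrically controlled window, or doubly incompressible in $W$ in the sense that they can be forced onto a pleated surface of bounded complexity. Thurston's extended Uniform Injectivity Theorem is the technical linchpin that turns this qualitative topological picture into a quantitative bound feeding into Theorem \ref{rel bound}.
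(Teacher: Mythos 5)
Your overall reduction is the same one the paper describes: apply the Relative Boundedness Theorem (Theorem \ref{rel bound}) to each component $W$ of $M-\window(M)$, using a multicurve on $\partial W$ built from the cores of the frontier annuli, i.e.\ from the window frame. The genuine gap is the step you call ``the core of the proof'': the uniform bound on $\ell_\rho(\gamma_W)$ over all $\rho\in AH(\pi_1(M))$. In the paper this is precisely the Window Frame Bounded Theorem (Theorem \ref{frame}), which is a substantive theorem in its own right, proved by Thurston via the area growth rate of branched pleated surfaces (with an alternate proof via the Uniform Injectivity Theorem). Your justification (i) --- that a frame curve is homotopic into $\window(M)$ ``where the geometry is controlled by the $I$-bundle structure'' --- is empty: the window is exactly the part of $M$ where degeneration is allowed to occur, and a topological $I$-bundle structure carries no metric control; if it did, the theorem would be immediate. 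Your justification (ii) is also invalid: placing a curve in the pleating locus of a pleated surface does not bound the length of its geodesic representative (pleating leaves can be arbitrarily long), and Thurston's extended Uniform Injectivity Theorem \emph{assumes} a length bound on the doubly incompressible multicurve in its hypotheses --- it does not produce one. So the quantitative input that makes the whole argument run is asserted rather than supplied.

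A secondary problem is your construction of $\gamma_W$: you enlarge the frame cores by ``further curves on $W\cap\partial M$'' to achieve double incompressibility in $W$, but those extra curves would then also need a length bound uniform over $AH(\pi_1(M))$, which is false in general --- a uniform bound on a doubly incompressible multicurve would, by Theorem \ref{rel bound}, force compactness of the whole deformation space, contradicting the existence of degenerating sequences supported on the window. The correct deduction uses only the frame curves, whose double incompressibility (in the appropriate relative sense) in $W$ follows from the characteristic property of the window --- every essential annulus of $M$ is isotopic into it --- together with Theorem \ref{frame} for the length bound. In short: right skeleton, but the linchpin is Theorem \ref{frame}, and your proposal neither invokes it nor gives a valid substitute for it.
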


The window is an $I$-bundle over a (usually disconnected) compact surface $S$ called the {\em window base} (denoted wb above). Its boundary $\partial S$ is the {\em window frame}.

The Broken Windows Only Theorem is deduced from Theorem \ref{rel bound} and a uniform bound on the length of the window frame:

\begin{theorem}[Window Frame Bounded, {\cite[Theorem 1.3]{thurston:hypIII}}] \label{frame}
For any manifold $M$ with incompressible boundary, there is a constant $C$ such that among all elements $\rho\in AH(\pi_1(M))$, the length in $N_\rho$ of $\partial \wb(M)$ is bounded.
\end{theorem}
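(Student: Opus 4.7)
The plan is to argue by contradiction using the Culler--Morgan--Shalen compactification. Suppose that $\{\rho_i\}\subset AH(\pi_1(M))$ is a sequence along which some component $c$ of $\partial\wb(M)$ satisfies $\ell_{\rho_i}(c)\longrightarrow\infty$. Choose a rescaling $\varepsilon_i\longrightarrow 0$ as in \textsection\ref{compactification:tree}, comparable to $1/\ell_{\rho_i}(c)$, so that $\varepsilon_i\ell_{\rho_i}(g)$ is uniformly bounded for $g$ in a finite generating set while $\varepsilon_i\ell_{\rho_i}(c)$ stays bounded away from $0$. Along a subsequence, this yields a small minimal action of $\pi_1(M)$ on a real tree $\mathcal{T}$ with $\delta_{\mathcal{T}}(c)>0$.

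Next, I would exploit the incompressibility of $\partial M$ to decompose the action along the JSJ splitting: $\pi_1(M)$ is the fundamental group of a graph of groups whose vertex groups are the fundamental groups of the components of $\window(M)$, of the solid torus and thickened torus pieces of the characteristic submanifold, and of the closures of the components of $M$ minus the characteristic submanifold. Restrict the action on $\mathcal{T}$ to each vertex group, on its minimal invariant subtree. For each acylindrical piece $W$, Theorem \ref{acylindrical} gives that $AH(\pi_1(W))$ is compact, so $\rho_i|_{\pi_1(W)}$ remains in a compact set after conjugation, and hence the rescaled translation lengths tend to $0$ uniformly on any finite subset of $\pi_1(W)$: the restricted action fixes a point. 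For each $I$-bundle piece $S\times I$ of the window, Skora's Theorem identifies the restricted action as dual to a measured geodesic lamination $\mu_S$ on $S$, whose support lies in the interior of $S$. The solid and thickened torus pieces contribute only translations along lines corresponding to their core curves.

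Now, any component $c$ of $\partial\wb(M)$ is a boundary curve of the window base of some $I$-bundle piece. Since $i(\mu_S,c)=0$, its translation length in the corresponding subtree is zero. On the opposite side of its frame annulus, $c$ is either a conjugacy class in an acylindrical vertex group, where the action is trivial, or a multiple of the core of an adjacent torus piece; in the latter case one controls $c$ via an auxiliary application of Theorem \ref{rel bound}, after completing $\partial\wb(M)$ to a doubly incompressible multicurve. Assembling the Bass--Serre data one deduces $\delta_{\mathcal{T}}(c)=0$, contradicting $\delta_{\mathcal{T}}(c)>0$.

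The hardest step will be upgrading Theorem \ref{acylindrical} from mere compactness of the deformation space to triviality of every small limit tree action for the acylindrical vertex groups; this requires a delicate basepoint argument matching the conjugation used in the compactness statement to the fixed basepoint chosen for the tree convergence. A second subtlety is the treatment of the solid torus and thickened torus pieces, whose cores need not a priori have bounded length along $\{\rho_i\}$: here Theorem \ref{rel bound}, applied to a suitable doubly incompressible extension of $\partial\wb(M)$, is the natural tool, but constructing such an extension with controlled length will itself require aspects of the Broken Windows picture, leading to a careful bootstrap argument interlacing Theorems \ref{rel bound}, \ref{frame} and \ref{broken window}.
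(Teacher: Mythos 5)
Your contradiction setup does not get off the ground. If $\ell_{\rho_i}(c)\to\infty$ for a frame curve $c$, rescaling by $\varepsilon_i\sim 1/\ell_{\rho_i}(c)$ only produces a limiting action on a real tree if $\ell_{\rho_i}(c)$ is comparable to the minimal joint displacement $K_i$ of a generating set; in the typical situation the theorem must handle --- the window itself degenerating much faster than its frame --- one has $K_i/\ell_{\rho_i}(c)\to\infty$, the rescaled displacements of the generators blow up, and there is no Gromov--Hausdorff limit tree at your chosen scale. If instead you rescale by $1/K_i$ (the only scale at which the Culler--Morgan--Shalen/Bestvina--Paulin construction applies), the conclusion of your Bass--Serre analysis, $\delta_{\mathcal T}(c)=0$, is perfectly compatible with $\ell_{\rho_i}(c)\to\infty$ sublinearly in $K_i$, so no contradiction is reached. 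This is exactly the coarseness of this compactification discussed in Section \ref{sec:necessity}: it only records the fastest-degenerating part of the representation, whereas Theorem \ref{frame} asserts a uniform bound $C$, a statement strictly stronger than vanishing of limiting translation lengths. For the same reason the Skora/fixed-point analysis of the vertex groups, even if carried out correctly (note also that the complementary pieces of the characteristic submanifold are only \emph{relatively} acylindrical, so Theorem \ref{acylindrical} does not apply to them directly; one needs the relative statement as in Theorem \ref{ms broken window}), cannot certify boundedness of the frame.

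The closing ``bootstrap'' is moreover circular: Theorem \ref{rel bound} only applies once one has a doubly incompressible multicurve of \emph{a priori bounded length} along the sequence, and producing such a bounded system with no hypothesis is precisely the content of Theorem \ref{frame}; in the actual logical order, the Broken Windows Only Theorem \ref{broken window} is \emph{deduced from} Theorems \ref{rel bound} and \ref{frame}, so neither may be invoked here. The known proofs proceed quite differently: Thurston's argument bounds the frame length via the area growth rate of branched pleated surfaces (see the appendix of \cite{morgan:compactification}), and an alternate proof based on the Uniform Injectivity Theorem \ref{uniform injectivity} appears in the appendix of \cite{bmns:bounded:combinatorics}. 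Some geometric input of this kind, giving genuine length bounds rather than rescaled asymptotics, is unavoidable.
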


Thurston's proof of Theorem \ref{frame} (\cite{thurston:hypI}, see also the appendix of \cite{morgan:compactification}) uses the area growth rate of branched pleated surfaces. An alternate proof using the Uniform Injectivity Theorem appeared in \cite[Appendix]{bmns:bounded:combinatorics}.\\

In \cite{thurston:hypII}, the Broken Windows Only Theorem (Theorem \ref{broken window}) has a second part, generalizing
 a previous result of Thurston on surface groups (\cite[Theorem 6.2]{thurston:hypII}), and setting up the existence of sequences of maximal convergence and submanifolds of maximal convergence. But, as observed by Ohshika, the Convergence on Subsurfaces Theorem, \cite[Theorem 6.2]{thurston:hypII}, does not extend to manifolds with incompressible boundaries as described by Thurston (see the example in \cite[\textsection 5.3]{ohshika:degeneration}). On the other hand, Canary, Minsky and Taylor ( \cite[Theorem 5.5]{canary:minsky:taylor}) observed that one may remark the representations and extract a subsequence so that it converges on most of $M$:

\begin{theorem}[{\cite[Theorem 2.8]{bbcm:windows}}]
Let $M$ be a compact $3$-manifold with incompressible boundary and consider a sequence $\{\rho_i\}\subset AH(\pi_1(M))$ of representation uniformizing $M$. Then after passing to a subsequence, there is a collection $\mathcal{B}$ of essential annuli and a sequence of homeomorphisms $\phi_i:M\to M$ each supported on $window(M)$ such that
\begin{enumerate}[(1)]
\item $\lim \ell_{\rho_n\circ(\phi_n)_*}(c)=0$ for any simple closed $c\subset\partial\mathcal{B}$ and
\item $\{\rho_n\circ(\phi_n)_*\}$ converges on the fundamental group of each component of $M- \mathcal{B}$.
\end{enumerate}

\end{theorem}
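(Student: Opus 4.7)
My plan is to work component-by-component on the window: first Broken Windows Only gives convergence outside the window, then for each window component $W$ a decomposition of its base surface coming from the Thurston limits of the Ahlfors--Bers coordinates produces the annuli $\mathcal{B} \cap W$ and the Dehn twists $\phi_i|_W$, after which the Double Limit Theorem on subsurface covers provides convergence on the complementary pieces of $W$.

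\textbf{Step 1 (Reduction to the window).} By Theorem \ref{broken window} and Theorem \ref{frame}, pass to a subsequence so that $\{\rho_i\}$ converges on $\pi_1$ of every component of $M - \window(M)$ and $\ell_{\rho_i}(\partial\wb(M))$ stays uniformly bounded. It then suffices to treat each window component $W$ separately: for each one we will find a subcollection $\mathcal{B}_W$ of essential annuli and Dehn twists $\phi_i$ supported on $W$ so that the conclusions (1) and (2) hold on $W$, and then take $\mathcal{B} = \bigcup_W \mathcal{B}_W$.

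\textbf{Step 2 (Annuli from Thurston limits).} Write $W$ as an $I$-bundle over $F \subset \wb(M)$. The Ahlfors--Bers coordinates of $\rho_i$ restrict to a pair $(\sigma_i^+, \sigma_i^-) \in \mathcal{T}(F) \times \mathcal{T}(F)$ (with the obvious modification when $W$ is twisted). After a subsequence, $(\sigma_i^+, \sigma_i^-) \to (\mu^+, \mu^-)$ in $\overline{\mathcal{T}(F)} \times \overline{\mathcal{T}(F)}$. Choose a maximal multi-curve $C_W$ on $F$ whose components are disjoint from $|\mu^+| \cup |\mu^-|$ such that on every component $F_j$ of $F - C_W$ the restrictions $\mu^\pm|_{F_j}$ bind $F_j$. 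Let $\mathcal{B}_W$ be the $I$-subbundle of $W$ over $C_W$, a union of essential annuli. By Lemma \ref{ahlfors}, $\ell_{\rho_i}(c)$ is bounded for each $c \in C_W$; after a further subsequence these lengths converge, and we discard from $\mathcal{B}_W$ any annulus whose core length converges to a positive value (absorbing such an annulus into the adjacent subsurface piece). The remaining cores satisfy (1).

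\textbf{Step 3 (Remarking and Double Limit).} Choose $\phi_i$ supported on a regular neighbourhood of $\mathcal{B}_W$, each a product of Dehn twists along the components of $\mathcal{B}_W$, with powers selected so that after remarking by $(\phi_i)_*$ the Ahlfors--Bers coordinates have bounded Dehn-twist parameter around every component of $C_W$. After a further subsequence the projections of $(\phi_i)_*(\sigma_i^\pm)$ to $\mathcal{T}(F_j)$ converge in $\overline{\mathcal{T}(F_j)}$ to the binding laminations $\mu^\pm|_{F_j}$. Applying the Double Limit Theorem (Theorem \ref{limit double}) to the $\pi_1(F_j)$-cover of $\Hp^3/\rho_i(\pi_1(M))$, the sequence $\rho_i \circ (\phi_i)_*$ converges on $\pi_1(F_j)$; since $\pi_1(F_j)$ is the fundamental group of the corresponding component of $W - \mathcal{B}_W$, assembling over all window components proves (2).

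\textbf{Main obstacle.} The delicate point is Step 3: one must show that the divergence of $(\sigma_i^+, \sigma_i^-)$ in $\mathcal{T}(F) \times \mathcal{T}(F)$ is captured, up to Dehn twisting along $C_W$, by bounded data in each $\mathcal{T}(F_j)$. This requires the Dehn-twist / subsurface decoupling of Teichmüller divergence (a statement about subsurface projections and Fenchel--Nielsen coordinates around the curves of $C_W$), together with an application of the Double Limit Theorem to subsurface covers of the $3$-manifolds, for which the Window Frame Bounded theorem (Theorem \ref{frame}) is essential in controlling the boundary behaviour of each $F_j$.
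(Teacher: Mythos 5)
Your Step 1 is fine and matches the intended use of the Broken Windows Only and Window Frame Bounded Theorems, but Steps 2 and 3 contain genuine gaps. First, the theorem concerns an arbitrary sequence $\{\rho_i\}\subset AH(\pi_1(M))$ uniformizing $M$, not a sequence in $\mathrm{int}(AH(\pi_1(M)))$: the $\rho_i$ need not be convex cocompact, so they need not have Ahlfors--Bers coordinates at all, and even when they do, the horizontal boundary of a window component is only a subsurface of $\partial M$, so there is no well-defined ``restriction'' of the coordinates to a pair $(\sigma_i^+,\sigma_i^-)\in\mathcal{T}(F)\times\mathcal{T}(F)$. Second, even granting such coordinates, your Step 2 claim that $\ell_{\rho_i}(c)$ is bounded for $c\in C_W$ via Lemma \ref{ahlfors} fails: Thurston's compactification only records projective information, and a curve disjoint from the support of the limit lamination can still have $\ell_{\sigma_i^\pm}(c)\to\infty$. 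Third, in Step 3 the Double Limit Theorem cannot be applied to the cover $\Hp^3/\rho_i(\pi_1(F_j))$ with the data you have: the Ahlfors--Bers coordinates (or end invariants) of that cover are not the subsurface projections of the original coordinates, and no remarking by twists along $\mathcal{B}_W$ fixes this; moreover, when $\mu^+$ and $\mu^-$ have the same support on some subsurface of $F$, no choice of $C_W$ makes the restrictions bind, and your scheme has no fallback precisely in that case.

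The proof indicated in the paper avoids conformal boundary data entirely and works inside the $3$-manifolds: since the window frame has uniformly bounded length (Theorem \ref{frame}), one realizes in each $N_i$ a pleated surface from the window base whose pleating locus contains the frame; Mumford's Compactness Theorem applied to the induced hyperbolic metrics produces the remarkings $\phi_i$ (arbitrary mapping classes of the window base, not just twists along $\mathcal{B}$) and identifies the curves that become short, whose associated vertical annuli form $\mathcal{B}$; conclusion (1) follows because pleated maps are $1$-Lipschitz, and conclusion (2) follows by combining the convergence of the induced metrics away from the thin parts with the Efficiency of Pleated Surfaces (Theorem \ref{efficiency}) and the Broken Windows Only Theorem to bound, and hence converge after a subsequence, the remarked representations on the fundamental group of each component of $M-\mathcal{B}$. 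If you want to salvage your outline, you would have to replace Steps 2--3 by this pleated-surface/Mumford argument, since subsurface control of Ahlfors--Bers coordinates is exactly what is not available at this level of generality.
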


The proof of the last statement combines the Broken Windows Only Theorem, the Efficiency of Pleated Surfaces and Mumford Compactness Theorem (\cite{mumford:compactness}, see also \cite[Proposition 5.6]{canary:minsky:taylor}).

\subsection{Morgan and Shalen's arguments: trees and codimension-$1$ laminations}
Morgan and Shalen start in a very general setting by considering a compact irreducible $3$-manifold $M$ and an incompressible subsurface of its boundary $\partial_0 M\subset\partial M$. They associate to each small minimal action of $\pi_1(M)$ on a real tree $\mathcal{T}$ 
a measured codimension $1$ lamination $\mathcal{L}\subset M$ and a morphism between its dual tree $\mathcal{T}_\mathcal{L}$ and $\mathcal{T}$. This morphism may not be injective: it may fold, i.e. map two adjacent segments onto one. This possible lack of injectivity cannot be overcome in general since there are small minimal actions of fundamental groups of compact atoroidal $3$-manifolds on real trees which are not dual to any codimension $1$ laminations (see \cite{ohshika:degeneration}). A morphism is still enough to guarantee that the fundamental group of every component of $M- \mathcal{L}$ has a fixed point in $\mathcal{T}$. In a previous work \cite{morgan:shalen:2}, Morgan and Shalen have shown that such a lamination sits (up to some surgeries and isotopies) in the characteristic submanifold relative to $\partial M-\partial_0 M$. This leads to the following statement:

\begin{theorem}[{\cite[Theorem IV.1.2]{morgan:shalen:3}}]	\label{ms broken window}
Let $M$ be a compact irreducible $3$-manifold, let $\partial_0 M\subset M$ be an incompressible subsurface and let $\Sigma\subset M$ be the characteristic submanifold relative to $\partial_0 M$. Let $\pi_1(M)\times \mathcal{T}\to \mathcal{T}$ be a small action on a real tree and suppose that for any component $Z$ of $\partial M-\partial_0 M$, $\pi_1(Z)$ has a fixed point. Then for each component $C$ of $M-\Sigma$, the group $\pi_1(C)$ has a fixed point in $T$.
\end{theorem}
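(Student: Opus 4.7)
The plan is to build a codimension-$1$ measured lamination in $M$ that is dual to $\mathcal{T}$ in a controlled way, then apply the relative JSJ theory to push that lamination into $\Sigma$, and finally read off the fixed-point conclusion for each component of $M-\Sigma$.

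First I would construct the lamination. Lift the action to a $\pi_1(M)$-equivariant map $\tilde f\colon\widetilde M\to\mathcal{T}$ by choosing an equivariant map on the $1$-skeleton of a triangulation of $\widetilde M$ and extending simplicially, sending each simplex to the convex hull of the images of its vertices. After equivariantly perturbing $\tilde f$ to be transverse to the midpoints of the edges of $\mathcal{T}$ (in the sense of Morgan--Shalen, using the countable collection of branch points and midpoint subdivisions), the preimage of these midpoints is a $\pi_1(M)$-invariant codimension-$1$ lamination in $\widetilde M$ carrying a natural transverse measure, descending to a measured codimension-$1$ lamination $\mathcal{L}\subset M$. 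Its dual tree $\mathcal{T}_{\mathcal{L}}$ admits a natural $\pi_1(M)$-equivariant morphism $\Phi\colon\mathcal{T}_{\mathcal{L}}\to\mathcal{T}$: by construction $\Phi$ sends any vertex corresponding to a component of $\widetilde M-\widetilde{\mathcal{L}}$ to a single point of $\mathcal{T}$, so $\pi_1$ of each component of $M-\mathcal{L}$ has a fixed point in $\mathcal{T}$.

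Next I would arrange compatibility with the boundary. The hypothesis that for every component $Z$ of $\partial M-\partial_0 M$ the group $\pi_1(Z)$ fixes a point in $\mathcal{T}$ lets me perform the initial equivariant map so that it is constant on the preimage of each such $Z$; hence the perturbation producing $\mathcal{L}$ can be done relative to $\partial M-\partial_0 M$, and we may assume $\mathcal{L}\cap(\partial M-\partial_0 M)=\emptyset$. On the remaining boundary $\partial_0 M$, which is incompressible, the trace of $\mathcal{L}$ is a measured geodesic lamination (or an essential $1$-complex, to be simplified by standard surgeries). One then applies the usual clean-up: remove spheres and discs in complementary regions, eliminate monogons and bigons with $\partial_0 M$, and discard parallel leaves to put $\mathcal{L}$ in normal form with respect to $(M,\partial_0 M)$. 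At each of these surgeries the morphism $\Phi$ remains well-defined and the fixed-point property for complementary components is preserved.

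Now comes the main step, where I would invoke the structural results of \cite{morgan:shalen:2}: a measured codimension-$1$ lamination in normal form, with no leaves meeting $\partial M-\partial_0 M$, can be isotoped (after further surgery producing only $I$-bundle and Seifert pieces among complementary regions with non-abelian $\pi_1$) into the characteristic submanifold $\Sigma$ relative to $\partial_0 M$. This is the core geometric obstacle, since in general $\Phi$ may genuinely fold and one must ensure that the surgeries used to push $\mathcal{L}$ into $\Sigma$ do not destroy the morphism to $\mathcal{T}$; the argument proceeds by noting that any leaf of $\mathcal{L}$ essential in a complementary region of $\Sigma$ would violate the acylindricity (relative to $\partial_0 M$) of $M-\Sigma$, and the fixed-point property of the complementary components of the original $\mathcal{L}$ passes to the components of the new lamination.

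Finally, once $\mathcal{L}\subset\Sigma$, every component $C$ of $M-\Sigma$ lies inside a single component $U$ of $M-\mathcal{L}$. From the morphism $\Phi$, the subgroup $\pi_1(U)$ fixes a point of $\mathcal{T}$, and therefore so does its subgroup $\pi_1(C)$. This yields the desired conclusion. The hard part of the argument is the structural step placing $\mathcal{L}$ inside $\Sigma$: reconciling the surgeries needed to simplify $\mathcal{L}$ with the equivariant morphism to $\mathcal{T}$, and ruling out essential leaves in $M-\Sigma$ by means of the relative acylindricity built into the definition of the characteristic submanifold.
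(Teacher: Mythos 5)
Your proposal follows essentially the same route as the paper's outline of Morgan--Shalen's argument: resolve the small action by a measured codimension-$1$ lamination $\mathcal{L}$ equipped with a morphism from its dual tree to $\mathcal{T}$ (so complementary components have fixed points), use the hypothesis on $\partial M-\partial_0 M$ to keep $\mathcal{L}$ away from that part of the boundary, and then invoke \cite{morgan:shalen:2} to place $\mathcal{L}$, after surgeries and isotopies, in the characteristic submanifold $\Sigma$, whence each component of $M-\Sigma$ lies in a complementary region of $\mathcal{L}$ and its fundamental group fixes a point. You also correctly identify the delicate point, namely that the morphism may fold and that the normalizing surgeries must preserve the fixed-point property, which is exactly where the substance of Morgan--Shalen's proof lies.
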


When $M$ is acylindrical, the characteristic submanifold is empty and it follows from this statement that there is no small minimal action of $\pi_1(M)$ on a non-trivial real tree. Then the conclusion of Theorem \ref{acylindrical} follows from Culler--Morgan--Shalen's compactification of the deformation space.

Let us add that a more general result about splitting of groups acting on real trees (from which Theorem \ref{ms broken window} can be deduced) has been obtained by Rips, using combinatorial methods instead of topological arguments, see \cite{bestvina:feighn:rips} and \cite[\textsection 12]{kapovich:book}.\\

\subsection{Mixing the arguments}
Theorems \ref{ms broken window} and \ref{broken window} both tell us that to bound a sequence of representations $\rho_i\in AH(\pi_1(M))$ it suffices to bound its restriction to the fundamental group of the window. Using this observation, we will extend the Double Limit Theorem to manifolds with incompressible boundaries. Let us first set up a property of laminations on $\partial M$ that will play the role of the binding property in the Double Limit Theorem. We say that a measured lamination $\lambda\in \ML(\partial M)$ on the boundary of a manifold with incompressible boundary is {\em acylindrical} if there is $\eps>0$ such that $i(\lambda,\partial A)\geq\eps$ for any essential annulus $A\subset M$. As observed by Bonahon--Otal (\cite{bonahon:otal:plissage}, when $M$ is not an $I$-bundle, it is equivalent to require that $i(\lambda,\partial A)>0$ for any essential annulus $A\subset M$.

\begin{theorem}
Let $M$ be a compact hyperbolizable $3$-manifold with incompressible boundary, let $\mathcal{C}$ be a connected component of $\mathrm{int}(AH(\pi_1(M)))$ containing a representation uniformizing $M$ and let $\mu\in\ML (\partial M)$ be an acylindrical measured geodesic lamination. Then for any sequence $\{\sigma_i\}$ in $\mathcal{T}(\partial M)$ converging to $\mu$ in $\overline{\mathcal{T}}(\partial M)$, the sequence of convex cocompact representations in $\mathcal{C}$ with Ahlfors--Bers coordinates $\sigma_i$ has a converging subsequence.
\end{theorem}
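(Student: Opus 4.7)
The plan is to reduce the problem to the Double Limit Theorem applied on each window component of $M$, using the acylindricity hypothesis to supply the binding condition, and to invoke the Broken Windows Only Theorem together with the Window Frame Bounded Theorem to control the rest of $M$ and glue the pieces together.

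First, I would transfer the boundary hypothesis into interior length bounds. By Theorem \ref{thurston comp}, the convergence $\sigma_i\to\mu$ provides a sequence of weighted multicurves $\mu_i$ on $\partial M$ converging to $\mu$ with $\ell_{\sigma_i}(\mu_i)$ bounded; Lemma \ref{ahlfors} then gives $\ell_{\rho_i}(\mu_i)\leq 2\ell_{\sigma_i}(\mu_i)=O(1)$.

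Next, decompose $\window(M)$ into its $I$-bundle components $W$, each fibering over a window base $S_W$ with horizontal boundary $\partial_h W\subset\partial M$. The restriction $\mu|_{\partial_h W}$ splits into two laminations $\mu^+_W,\mu^-_W$ on a surface $\tilde S_W$ — two copies of $S_W$ if the bundle is trivial, or the two sheets of the orientation double cover of $S_W$ in the twisted case. I claim $\mu^+_W,\mu^-_W$ bind $\tilde S_W$: an essential simple closed curve $c\subset\tilde S_W$ determines, via the bundle structure, an essential annulus in $M$ (using that $\partial M$ is incompressible and $W$ lies essentially in the characteristic submanifold), so the acylindricity of $\mu$ yields $i(\mu^+_W,c)+i(\mu^-_W,c)\geq\eps>0$. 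Applying Theorem \ref{dl curves} to the restriction of $\rho_i$ (or its lift to the double cover in the twisted case) with approximating multicurves cut from $\mu_i$ produces a converging subsequence on $\pi_1(W)$. Meanwhile, for each component $Y$ of $M-\window(M)$, the Broken Windows Only Theorem (Theorem \ref{broken window}) directly gives boundedness of $\rho_i|_{\pi_1(Y)}$ up to conjugacy. A diagonal extraction yields simultaneous subsequential convergence on every piece.

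The main obstacle is the final gluing step: the independent conjugations used to normalize each piece must be reconciled into a single convergence on $AH(\pi_1(M))$. The key tool is Theorem \ref{frame}: the frame curves $\partial\wb(M)$ lie in both window and non-window pieces and have uniformly bounded translation lengths under $\rho_i$. Pinning the axes of a generating multicurve of the frame to a fixed compact region of $\Hp^3$ supplies a simultaneous normalization. Since $\pi_1(M)$ is generated by the fundamental groups of the pieces glued along the frame (Seifert--van Kampen), the piecewise convergences then assemble into a converging subsequence of $\rho_i$ in $AH(\pi_1(M))$.
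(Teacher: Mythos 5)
Your first step (Theorem \ref{thurston comp} plus Lemma \ref{ahlfors} to get multicurves $\mu_i\to\mu$ with $\ell_{\rho_i}(\mu_i)$ bounded) matches the paper, but the reduction to the Double Limit Theorem on the window is not licensed as written. Theorem \ref{dl curves} is a statement about \emph{closed} surfaces, whereas the window base has non-empty boundary (the frame): the restrictions $\mu|_{\partial_h W}$ and the ``multicurves cut from $\mu_i$'' in general contain arcs meeting the frame, so they are not elements of $\ML(\tilde S_W)$ at all, and some relative version (this is where Thurston's Theorem \ref{rel bound} and the frame bound actually enter his program) is needed. More seriously, your binding claim is not established: acylindricity of $\mu$ gives $i(\mu^+_W,c)+i(\mu^-_W,c)\geq\eps$ only for simple closed curves $c$ whose vertical annuli are essential, but binding requires positivity against \emph{every} non-trivial $\nu\in\ML(\tilde S_W)$, including irrational laminations --- and this is exactly what the contradiction argument in Theorem \ref{baby dl curves}/\ref{dl curves} consumes. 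Positivity against curves does not imply positivity against laminations (the two restrictions could, a priori, share an irrational sublamination), and deducing the lamination statement from the annulus condition is a genuine lemma in the spirit of \cite{bonahon:otal:plissage}, not a one-line consequence.

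The final gluing step is the most critical gap. Convergence up to conjugacy on each window component and on each component of $M-\window(M)$ (the latter is indeed Theorem \ref{broken window}), together with the frame bound of Theorem \ref{frame}, does \emph{not} assemble into convergence on $\pi_1(M)$: a Dehn twist along a frontier annulus of the window acts trivially up to conjugacy on the fundamental group of every piece and preserves the length of the frame curves, yet changes the glued representation on every element crossing that annulus. This is precisely the phenomenon exhibited in Section \ref{sec:necessity}, where $\theta_i=\rho_i\circ(\Psi_c^i)_*$ diverges even though $\ell_{\theta_i}(c)$ stays bounded and the restrictions to the pieces converge, and it is why \cite[Theorem 2.8]{bbcm:windows} only yields convergence after remarking by homeomorphisms supported on the window (see also Ohshika's example \cite{ohshika:degeneration}). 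The acylindricity of $\mu$ is what must exclude this unbounded twisting, but your argument never brings it to bear on elements crossing the frontier annuli, so ``pinning the frame axes'' proves nothing. By contrast, the paper argues by contradiction: a divergent subsequence gives a small minimal action on a real tree, incompressibility and Skora's theorem give dual laminations $\nu_S$ on the boundary components, the Efficiency of Pleated Surfaces (Theorem \ref{efficiency}) gives two-sided length estimates, and Theorem \ref{ms broken window} produces an essential annulus $A$ with $\partial A$ in the supporting surface of $\nu$; applying acylindricity to that single annulus forces $i(\mu,\nu)>0$, hence $\ell_{\rho_i}(\mu_i)\to\infty$, a contradiction --- a route that sidesteps both the binding and the gluing problems your proposal leaves open.
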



Using Lemma \ref{ahlfors}, the hypothesis on $\{\sigma_i\}$ can be replaced with a bound on the length of a sequence of weighted multi-curves converging to $\mu$. The resulting statement can then be established using Theorem \ref{broken window} and the arguments explained in \textsection \ref{sec:thurston}. If we add the assumption that the limit is almost minimal (see Remark \ref{almost max}), we can also build a proof on Theorem \ref{ms broken window} and Otal's arguments (compare with \cite[Lemme 14]{bonahon:otal:plissage}). Let us mix the two approaches to provide an alternative and fairly short proof (see also \cite[Theorem 3.7]{ohshika:limitqc} and \cite[Theorem 3.1]{ohshika:constructing} for different mixes of those arguments).

\begin{proof}
As in the proof of the Double Limit Theorem, we use Theorem \ref{thurston comp} and Lemma \ref{ahlfors} to obtain a sequence of weighted multi-curves $\mu_i\in \ML(\partial M)$ such that $\mu_i\longrightarrow\mu$ and $\{\ell_{\rho_i}(\mu_i)\}$ is bounded (compare with the beginning of \textsection \ref{sec:double limit}).

As we have seen in \textsection \ref{compactification:tree}, if $\rho_i$ has no converging subsequence then a subsequence converges to a small minimal action on a real tree $\mathcal{T}$, namely there is $\eps_i\longrightarrow 0$ such that $\eps_i\ell_{\rho_i}(c^*)\longrightarrow \delta_\mathcal{T}(c)$ for any closed curve $c\in M$. For each component $S$ of $\partial M$ with negative Euler characteristic, since $M$ has incompressible boundary, the map $i_*:\pi_1(S)\to \pi_1(M)$ induced by the inclusion provides us with a small action of $\pi_1(S)$ on $\mathcal{T}$. We can apply Skora's Theorem \cite{skora:splitting} to the minimal invariant subtree $\mathcal{T}_S\subset \mathcal{T}$ to get a dual lamination $\nu_S$.

Building a pleated surface $f_{\lambda,i}:S\to N_i=\Hp^3/\rho_i(\pi_1(M))$ with a pleating locus that never gets too short (as explained in the proof of Theorem \ref{dl curves}), we get from the Efficiency of Pleated Surfaces (Theorem \ref{efficiency}) $\ell_{\rho_i}(d)\leq \ell_{f_{\lambda,i}}(d)\leq \ell_{\rho_i}(d)+Ca(\lambda_i,d)$ for any simple closed curve $d\subset S$. In particular $\eps_i \ell_{f_{\lambda,i}}(d)\longrightarrow \delta_T(d)=\delta_{T_S}(d)$. It follows that the metric induced by $f_{\lambda,i}$ converges to $\nu_S$ in Thurston's compactification. In particular, there is a sequence $\{\nu_i\}\longrightarrow\infty$ of measured laminations converging projectively to $\nu$ such that $i(\gamma,\nu_i)\leq \ell_{f_{\lambda,i}}(\gamma)\leq i(\gamma,\nu_i)+ C' \ell_{f_0}(\gamma)$ for any measured lamination $\gamma$ on $S$. Combining these inequalities with the Efficiency of pleated surfaces, we get 
\begin{equation}
i(\gamma,\nu_i)-Ca(\lambda_i,\gamma)\leq \ell_{\rho_i}(\gamma)\leq i(\gamma,\nu_i)+ C'\ell_{f_0}(\gamma). \label{ineq:incomp}
\end{equation}

Set $\nu=\bigcup_{S\subset\partial M} \nu_S$ and denote by $S(\nu)$ its minimal supporting surface. Let $F\subset\partial M$ be an essential subsurface. It follows from the definition of $\nu$ that $\pi_1(F)$ has a fixed point in $T$ if and only if $F$ is disjoint from $S(\nu)$ (up to isotopy). It follows then from \cite[Theorem IV.1.2]{morgan:shalen:3} that there is a collection $\Sigma_\nu$ of essential $I$-bundles, solid tori and thickened tori such that $S(\nu)=\partial\Sigma\cap \partial M$. Assuming that $M$ is not an $I$-bundle, consider an essential annulus $A\subset \partial\Sigma$. By assumption, $i(\partial A,\mu)>0$. This is possible only if $i(\mu,\nu)>0$. Then we get $i(\mu_i,\nu_i)\longrightarrow\infty$ and $\ell_{\rho_i}(\mu_i)\longrightarrow\infty$ by inequality (\ref{ineq:incomp}). This contradiction concludes the proof.
\end{proof}

\section{Manifolds with compressible boundary}  \label{sec:compressible}

In the previous section we saw that with some additional work, an analogue of the Double Limit Theorem could be established for manifolds with incompressible boundary. To prove a similar result in full generality, we need to consider manifolds with compressible boundary. As we will see in this section, some of the results that were crucial in each proof either are not known in this level of generality or fail to be true.

The first step in both Thurston's and Otal's proof was Lemma \ref{ahlfors} and an essential hypothesis in its proof (see \cite[Theorem 3]{bers:boundaries} and \cite[Theorem 5.1.1]{otal:fibre}) is that the domain of discontinuity is simply connected. Canary first overcame this issue in  \cite{canary:poincare} by using new arguments and allowing the multiplicative constant to depend on the injectivity radius of the domain of discontinuity. 

\begin{theorem}[{\cite{canary:poincare}}]		\label{canahl}
Given $A>0$, there exists $R$ such that, if $\Gamma$ is a nonelementary Kleinian group such that every geodesic in $D_\Gamma$ has length (in the Poincar\'e metric on $D_\Gamma$) at least $A$ and if $c$ is any closed curve on $S=D_\Gamma/\Gamma$, then
$$\ell_N(c^*)\leq R\ell_S(c)$$
where $N=\Hp^3/\Gamma$.
\end{theorem}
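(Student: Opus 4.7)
The classical proof of Lemma~\ref{ahlfors} uses the $\Gamma$-equivariant nearest-point retraction $r:D_\Gamma\to\partial\mathrm{CH}(\Lambda_\Gamma)$ onto the convex hull of the limit set, which is $2$-Lipschitz from the Poincar\'e metric to the path metric on $\partial\mathrm{CH}(\Lambda_\Gamma)$ when each component of $D_\Gamma$ is simply connected. When $D_\Gamma$ is multiply connected the retraction still makes pointwise sense, but in passing to the quotient $S=D_\Gamma/\Gamma$ it can fold through short Poincar\'e geodesics of $D_\Gamma$, destroying any uniform Lipschitz bound. The hypothesis that every closed Poincar\'e geodesic in $D_\Gamma$ has length at least $A$ is precisely what is needed to prevent this folding in a controlled neighbourhood of $S$.

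The approach I would take is to replace the singular retraction by a smooth $\Gamma$-equivariant immersion $\Phi:D_\Gamma\to\Hp^3$ whose image is a surface $\widetilde\Sigma\subset\Hp^3$ on which distances can be bounded directly. A natural candidate is the Epstein envelope of the Poincar\'e metric: writing this metric as $e^{\varphi(z)}|dz|$ in a local chart, Epstein's formula produces a smooth $\Gamma$-equivariant surface whose first fundamental form is expressible through $\varphi$ and its first derivatives. The key step would be to show that the induced metric on $\widetilde\Sigma$ is bilipschitz equivalent to the Poincar\'e metric on $D_\Gamma$, with bilipschitz constants depending only on $A$. Granting this, the proof concludes quickly: the Epstein surface descends to an immersed surface $\Sigma\hookrightarrow N$ homotopic to the identification $S\to\partial_\infty N$, and for any closed curve $c\subset S$ the corresponding curve $c^\Sigma\subset\Sigma$ is freely homotopic to $c^*$ in $N$, yielding
\[
\ell_N(c^*)\le \ell_\Sigma(c^\Sigma)\le R\,\ell_S(c).
\]

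The core difficulty, and the main obstacle, is the uniform bilipschitz comparison between the induced metric on the Epstein surface and the Poincar\'e metric. The formula for the induced metric involves derivatives of $\varphi$, which can blow up near short geodesics of $D_\Gamma$. To handle this I would split $D_\Gamma$ into thick and thin parts: by the collar lemma, the hypothesis $A$ gives a uniform conformal collar around each closed Poincar\'e geodesic, inside which one can compute directly in the explicit model of a hyperbolic annulus; outside the collars the Poincar\'e geometry is uniformly controlled and the Epstein surface sits close to $\partial\mathrm{CH}(\Lambda_\Gamma)$, where the classical $2$-Lipschitz estimate of Bers applies. Patching the two regimes together should yield a bilipschitz constant depending only on $A$, hence the desired $R$.
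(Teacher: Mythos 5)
First, a point of reference: the survey does not prove Theorem \ref{canahl}; it only quotes it from Canary's paper, so your proposal has to be measured against Canary's argument, whose engine is a pointwise comparison between the Poincar\'e density $\lambda_{D_\Gamma}$ and the density $1/d(z,\Lambda_\Gamma)$. Your outline leaves exactly that step unproved, and the route you sketch for it would not work as stated. The failure of the classical estimate in the multiply connected case is not a ``folding'' phenomenon localized near short geodesics of $D_\Gamma$: it is the failure of the comparison $\lambda_{D_\Gamma}(z)\,d(z,\Lambda_\Gamma)\geq c$, which in the simply connected case comes from the Koebe theorem. Moreover, under the hypothesis of the theorem there are \emph{no} closed Poincar\'e geodesics of length less than $A$ in $D_\Gamma$, and since $\Gamma$ is nonelementary $\Lambda_\Gamma$ is perfect, so there are no punctures coming from isolated boundary points either; hence the ``thin part'' of your thick--thin decomposition is empty and the collar computations do no work. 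What the hypothesis actually buys --- and what your plan never isolates --- is the quantitative statement that $D_\Gamma$ is uniformly perfect with constant depending only on $A$, equivalently $\lambda_{D_\Gamma}(z)\,d(z,\Lambda_\Gamma)\geq c(A)$ for every $z$: if this product were small at some $z$, a round annulus of large modulus centered at a nearest boundary point would be essential in $D_\Gamma$ and would yield either a closed geodesic of length less than $A$ or an isolated point of $\Lambda_\Gamma$. Once this is in hand, the nearest-point retraction to $\partial\mathrm{CH}(\Lambda_\Gamma)$ --- which is Lipschitz with a universal constant with respect to the density $1/d(z,\Lambda_\Gamma)$ --- is already $R(A)$-Lipschitz from the Poincar\'e metric, is $\Gamma$-equivariant and lands in the correct homotopy class, so $\ell_N(c^*)\leq R(A)\,\ell_S(c)$ follows with no Epstein surface at all.

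Two further specific problems with your sketch. The claim that ``outside the collars the Epstein surface sits close to $\partial\mathrm{CH}(\Lambda_\Gamma)$, where the classical $2$-Lipschitz estimate of Bers applies'' is circular: that estimate is precisely the simply connected statement (it rests on $\lambda\asymp 1/d$), so invoking it for a multiply connected domain assumes what must be proved, and proximity of a surface to the convex hull boundary gives, by itself, no Lipschitz bound with respect to the Poincar\'e metric. Also, Epstein's formula for the induced metric of the envelope involves second derivatives of the conformal factor (the Schwarzian/Hessian term), not only first derivatives; bounding that tensor relative to the Poincar\'e metric is again a uniform-perfectness statement, so the Epstein detour relocates the key estimate rather than avoiding it (and note you only need an upper Lipschitz bound on the map, not a bilipschitz comparison). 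Your strategy can be repaired, but only after the step ``no geodesics of length $<A$ plus $\Lambda_\Gamma$ perfect $\Rightarrow$ $\lambda_{D_\Gamma}(z)\,d(z,\Lambda_\Gamma)\geq c(A)$'' (or an equivalent uniform bound on the Schwarzian of the Poincar\'e metric) is established; that is the missing idea, and it is the substance of the cited theorem.
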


Notice that a geodesic in the domain of discontinuity is a meridian, i.e. it bounds an essential disk. One can prove that a sequence for which the length of a meridian goes to $0$ necessarily diverges. Thus the dependence of the constant on $A$ will not be an obstacle when proving convergence results. Furthermore, at the price of dropping the linearity, Sugawa obtained in \cite[Proposition 6.1]{sugawa:perfectness} a universal constant with the following inequality (with the notation of Theorem \ref{canahl}):

\begin{equation}
\ell_N(c^*)\leq 2\ell_S(c) e^{\ell_S(c)/2}		\label{sugawa}
\end{equation}

This definitively solves the issue of replacing Lemma \ref{ahlfors}, even though, as mentioned earlier, Theorem \ref{canahl} was already enough. \\

A more critical obstacle when attempting to extend Thurston's arguments is that the Uniform Injectivity Theorem, which is essential in Thurston's proof of both the Double Limit Theorem and the compactness of $AH(\acylindrical)$, does not hold for compressible pleated surfaces. 
A sequence of compressible pleated surfaces for which the length of a meridian goes to $0$ does not converge in any reasonable sense. A way to get around this obstacle was given previously with the Relative Boundedness Theorem (Theorem \ref{rel bound}) where we required a bound on the length of a fixed doubly incompressible multicurve. Combining Theorem \ref{rel bound} and Sugawa's inequality (\ref{sugawa}), we get:


\begin{theorem}		\label{bounded infinity}
Let $\gamma\in\partial M$ be a doubly incompressible multicurve and consider a sequence $\{\rho_i\}\in AH(\pi_1(M))$. If $\ell_{\sigma_i}(\gamma)$ is bounded, then $\rho_i$ has a converging subsequence.
\end{theorem}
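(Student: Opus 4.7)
The statement should follow by directly combining Sugawa's inequality (\ref{sugawa}) with the Relative Boundedness Theorem (Theorem \ref{rel bound}): both have already done the hard work, and the proof is essentially a bookkeeping exercise. First, since the Ahlfors--Bers coordinates $\sigma_i = q(\rho_i)$ are only defined on $\mathrm{int}(AH(\pi_1(M)))$, I would use that each $\rho_i$ is convex cocompact, so that the Kleinian manifold $(\Hp^3\cup\Omega_{\rho_i})/\rho_i(\pi_1(M))$ compactifies $N_i = \Hp^3/\rho_i(\pi_1(M))$ with boundary identified with $\partial M$ via a homotopy equivalence. Under this identification the multicurve $\gamma\subset\partial M$ becomes a multicurve on the conformal boundary, and the hypothesis bounds its total hyperbolic length there.

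Next, I would apply Sugawa's inequality to each component $c$ of $\gamma$. Since (\ref{sugawa}) is universal and requires no lower bound on the injectivity radius of the conformal boundary, we obtain
$$\ell_{\rho_i}(\gamma) \;\leq\; \sum_{c\subset\gamma} 2\,\ell_{\sigma_i}(c)\, e^{\ell_{\sigma_i}(c)/2} \;\leq\; C$$
for some constant $C$ depending only on the given upper bound on $\ell_{\sigma_i}(\gamma)$. Thus the geodesic representatives of $\gamma$ in the hyperbolic $3$-manifolds $N_i$ have uniformly bounded total length.

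Finally, since $\gamma$ is by assumption doubly incompressible, Theorem \ref{rel bound} says that the subset of $AH(\pi_1(M))$ on which $\ell_\rho(\gamma)\leq C$ is compact. Hence $\{\rho_i\}$ has a converging subsequence, as desired. There is essentially no genuine obstacle in this argument: Sugawa's universal inequality has already bypassed the failure of Lemma \ref{ahlfors} on compressible boundaries (at the cost of replacing the linear bound by an exponential one, which is harmless here since we only need boundedness), and Theorem \ref{rel bound} has already circumvented the failure of Uniform Injectivity for compressible pleated surfaces through the use of the fixed doubly incompressible multicurve $\gamma$. The only point requiring care is the implicit identification of $\gamma$ with a multicurve on the conformal boundary, which is automatic from convex cocompactness and is built into the definition of $\sigma_i$.
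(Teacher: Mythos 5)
Your argument is exactly the paper's: the theorem is stated there as the direct combination of Sugawa's inequality (\ref{sugawa}), which converts the bound on $\ell_{\sigma_i}(\gamma)$ in the conformal boundary into a uniform bound on $\ell_{\rho_i}(\gamma)$ in $N_i$, with the Relative Boundedness Theorem (Theorem \ref{rel bound}) applied to the doubly incompressible multicurve $\gamma$. Your additional remarks on the marking of $\gamma$ on the conformal boundary and on the harmlessness of the exponential bound are correct bookkeeping and do not change the route.
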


The idea of adding a bound on the length of a well-chosen multi-curve has been pushed further by Canary who enhances the arguments of the proof of the Double Limit Theorem to get:

\begin{theorem}[{\cite{canary:schottky}}]	\label{canarschottky}
Let $H$ be a handlebody and consider a sequence $\{\rho_i\}\subset \mathrm{int}(AH(\pi_1(H)))$ with Ahlfors--Bers coordinates converging to a Masur domain lamination. If $H=S\times I$ and $\ell_{\rho_i}(\partial S)\leq K$ for all $i$ and some $K$ independent of $i$, then $\{\rho_i\}$ has a convergent subsequence in $AH(\pi_1(H))$.
\end{theorem}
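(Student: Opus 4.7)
The plan is to argue by contradiction, adapting the real-tree strategy of Section \ref{otal} to the handlebody setting, using Theorem \ref{canahl} (or inequality (\ref{sugawa})) as a substitute for Lemma \ref{ahlfors}, whose proof required incompressibility of the boundary. Interpreting $H=S\times I$ as the trivial $I$-bundle over a compact surface $S$ with boundary, so that $\partial S$ is a multi-curve in $\partial H$, the first step is to turn the Ahlfors--Bers convergence into uniformly bounded $\rho_i$-lengths for a sequence of weighted multi-curves $\mu_i\in\mathcal{ML}(\partial H)$ with $\mu_i\longrightarrow\mu$ projectively. Theorem \ref{thurston comp} supplies such $\mu_i$ with $\ell_{\sigma_i}(\mu_i)$ bounded, and Theorem \ref{canahl} converts this into a bound in $N_i=\Hp^3/\rho_i(\pi_1(H))$, provided the Poincar\'e lengths of the meridians of $H$ (the closed geodesics in $\Omega_{\rho_i}/\rho_i(\pi_1(H))$) admit a uniform positive lower bound. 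The hypothesis $\ell_{\rho_i}(\partial S)\leq K$ is precisely what produces this bound: a pinching meridian would force its $\sigma_i$-collar to be long, so any curve of $\partial S$ crossing it would blow up in $\sigma_i$-length, and Theorem \ref{bounded infinity} together with Sugawa's inequality (\ref{sugawa}) would then contradict the uniform length bound on $\partial S$ in $N_i$.

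Next, assume for contradiction that no subsequence converges in $AH(\pi_1(H))$. The Culler--Morgan--Shalen machinery of Section \ref{compactification:tree} then furnishes, after passing to a subsequence and rescaling by some $\varepsilon_i\longrightarrow 0$, a small minimal action of $\pi_1(H)$ on a real tree $\mathcal{T}$. Pulling back along $\pi_1(\partial H)\to\pi_1(H)$ and applying Skora's theorem \cite{skora:splitting} to the minimal invariant subtree, I obtain a dual measured lamination $\nu\in\mathcal{ML}(\partial H)$. Since the meridians of $H$ lie in the kernel of $\pi_1(\partial H)\to\pi_1(H)$, they act trivially on $\mathcal{T}$, hence $i(\nu,m)=\delta_{\mathcal{T}}(m)=0$ for every meridian $m$; consequently the projective class $[\nu]$ lies in the closure of the set of weighted meridians in $\mathcal{PML}(\partial H)$.

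By the defining property of the Masur domain, this forces $i(\mu,\nu)>0$, i.e. $\mu$ is realized in $\mathcal{T}$. Otal's Continuity Theorem \ref{contin}, applied to $\mu$, then yields a neighborhood $\mathcal{V}(\mu)$ of $|\mu|$ and constants $K'>0$, $i_0$ such that
$$\varepsilon_i\ell_{\rho_i}(\gamma)\geq K'\ell_{s_0}(\gamma)$$
for every simple closed curve $\gamma\subset\mathcal{V}(\mu)$ and every $i\geq i_0$. Applied to the components of $\mu_i$ (which eventually lie in $\mathcal{V}(\mu)$ as $\mu_i\longrightarrow\mu$ projectively) and summed with weights, this gives $\varepsilon_i\ell_{\rho_i}(\mu_i)\geq K'\ell_{s_0}(\mu_i)\longrightarrow\infty$, since $\mu_i\longrightarrow\infty$ in $\mathcal{ML}(\partial H)$. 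Combined with $\varepsilon_i\longrightarrow 0$ and the bound $\ell_{\rho_i}(\mu_i)\leq R'$ from the first step, this is the desired contradiction.

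I expect the main obstacle to be the first step: extracting from $\ell_{\rho_i}(\partial S)\leq K$ the uniform positive lower bound on the Poincar\'e lengths of meridians required by Theorem \ref{canahl}. A handlebody's domain of discontinuity is multiply connected, and short meridians are precisely the mechanism by which such representations typically degenerate, so the interaction between $\partial S$, the disk set of $H$, and the Masur-domain condition on $\mu$ must be handled carefully. Once this control is in place, the remaining steps are a handlebody adaptation of the tree-theoretic arguments of Sections \ref{otal} and \ref{secn:incompressible}, with the Masur-domain condition playing the role that the binding hypothesis plays in the closed-surface case.
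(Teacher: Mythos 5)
The decisive step of your argument is not available. The action of $\pi_1(\partial H)$ on $\mathcal{T}$ obtained by pulling back along $\pi_1(\partial H)\to\pi_1(H)$ is not small: the kernel of this map is an infinitely generated non-abelian free group acting trivially on $\mathcal{T}$, so it lies in every arc stabilizer, and Skora's theorem does not apply. This is exactly the obstruction recorded in Section \ref{sec:compressible} (``when $S$ is compressible, this action is not small and hence may not be dual to a measured geodesic lamination''), so the lamination $\nu$ you work with does not exist. The rest of that step is also unjustified: even if some $\nu$ with $i(\nu,m)=0$ for every meridian $m$ were produced, this does not place $[\nu]$ in the closure of the weighted meridians, so the Masur domain condition on $\mu$ yields no intersection statement; and ``$i(\mu,\nu)>0$, i.e.\ $\mu$ is realized in $\mathcal{T}$'' is a substantial implication, not a reformulation. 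Realizing (a minimal component of) a Masur domain lamination in an arbitrary small minimal $\pi_1(H)$-tree is precisely the hard theorem of Kleineidam--Souto \cite{kleineidam:souto}, proved a decade after \cite{canary:schottky} by a careful study of limits of meridians. Moreover, even granting realization, the Continuity Theorem step needs Hausdorff-limit control: weak$^*$ convergence $\mu_i\to\mu$ does not force the components of $\mu_i$ to lie in $\mathcal{V}(\mu)$, which is the issue of Remark \ref{almost max} and the reason later statements carry ``almost minimal'' hypotheses until \cite{kim:lecuire:ohshika}. So your proof assumes away what is, both historically and mathematically, the main difficulty.

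Your first step also miscasts the hypothesis $\ell_{\rho_i}(\partial S)\leq K$. The mechanism you propose conflates lengths in $N_i$ with lengths in $\sigma_i$ (a curve of bounded length in $N_i$ can be enormous at infinity, which is exactly what happens in degenerations), and Theorem \ref{bounded infinity} does not apply to $\gamma=\partial S$: as a multicurve on $\partial H$ it is disjoint from the vertical essential annuli $c\times I$, $c\subset \mathrm{int}(S)$, hence is not doubly incompressible in the sense of Section \ref{secn:incompressible} (if it were, that theorem alone would finish the proof with no Masur domain assumption). Short geodesics at infinity are instead excluded using the Masur-domain limit of $\{\sigma_i\}$ together with (\ref{thucom}). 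More to the point, the proof being recorded here, Canary's, is not a tree argument: it follows Thurston's route of \textsection\ref{sec:thurston}, with Theorem \ref{canahl} replacing Lemma \ref{ahlfors}, and the bound on $\ell_{\rho_i}(\partial S)$ is used so that $\partial S$ --- doubly incompressible in Thurston's relative sense for the homotopy equivalence $S\to H$ --- can be kept, with bounded length, in the pleating locus of pleated surfaces $S\to N_i$; this is what restores the relative Uniform Injectivity/Efficiency machinery behind Theorem \ref{rel bound} for compressible pleated surfaces, with the Masur domain condition standing in for the binding hypothesis. An Otal-style proof is possible only with the realization theorem of \cite{kleineidam:souto} as input, not with Skora duality for $\pi_1(\partial H)$.
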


This statement raises another, although less decisive, issue: deciding what condition will replace the assumption that the laminations are binding. In \cite{masur:domain}, Masur introduced an open subset of $\PML(\partial H)$ for a handlebody $H$ which is now known as the {\em Masur domain}. Save for some exceptional cases, it consists in projective measured laminations which intersect every projective limits of meridians.

It was conjectured by Thurston (see \cite{canary:schottky}) that this domain was the appropriate setting to extend the Double Limit Theorem to handlebodies. Later, this definition was extended to compression bodies by Otal in \cite{otal:these} (see also \cite{kleineidam:souto}).\\

When trying to extend Otal's proof to manifolds with compressible boundary, one also encounters important difficulties. As already explained the issue of extending Lemma \ref{ahlfors} and the assumption that laminations are binding are shared by both proofs. When $M$ is not an $I$-bundle, we still use Culler--Morgan--Shalen's compactification to get a small minimal action of $\pi_1(M)$ on a real tree. If $S$ is a component of $\partial M$, we get an action of $\pi_1(S)$ on the same real tree through the map $i_*:\pi_1(S)\to\pi_1(M)$ induced by the inclusion. But when $S$ is compressible, this action is not small and hence may not be dual to a measured geodesic lamination. Showing that a measured lamination on the boundary is realised in the tree in order to use the Continuity Theorem becomes problematic. Again, one way to get around this obstacle is to assume a control on the length of some multi-curve whose complement is incompressible. With this idea, one can obtain statements that are close to Theorem \ref{canarschottky} (with the limitations described in Remark \ref{almost max}), see \cite{otal:schottky}. Deducing from the work of Culler--Vogtmann \cite{culler:vogtmann} that any action of a rank-$2$ free group is dual to a measured lamination on a compact surface, Otal shows the following: 

\begin{theorem}[{\cite[Theorem 1.5]{otal:schottky}}]
Let $H$ be a genus-$2$ handlebody and $\{\rho_i\}$ a sequence in $\mathrm{int}(AH(\pi_1(H)))$ with Ahlfors--Bers coordinates converging to a Masur domain lamination whose complementary regions are simply connected. Then $\{\rho_i\}$ has a converging subsequence.
\end{theorem}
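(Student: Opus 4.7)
The plan is to argue by contradiction, exploiting Culler--Morgan--Shalen's tree compactification together with the very special structure of small minimal $F_2$-actions. First, I would suppose that $\{\rho_i\}$ has no convergent subsequence in $AH(\pi_1(H))$. The Masur domain hypothesis ensures that no projective limit of meridians coincides with $\mu$, which prevents the length of any meridian from tending to zero in the conformal boundary $\sigma_i$ (up to subsequence): a shortest meridian going to zero would otherwise furnish a limit of meridians disjoint from $\mu$. With that lower bound on the injectivity radius at infinity, combining Theorem~\ref{thurston comp} with Canary's inequality (Theorem~\ref{canahl}) or Sugawa's estimate~(\ref{sugawa}) produces a sequence of weighted multi-curves $\{\mu_i\} \subset \mathcal{ML}(\partial H)$ with $\mu_i \to \mu$ projectively and $\ell_{\rho_i}(\mu_i)$ bounded. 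Simultaneously, Culler--Morgan--Shalen's compactification yields (after subsequence) a scale $\varepsilon_i \to 0$ and a small minimal isometric action of $\pi_1(H) = F_2$ on a real tree $\mathcal{T}$ such that $\varepsilon_i\, \delta_{\rho_i}(g) \to \delta_{\mathcal{T}}(g)$ for every $g \in F_2$.

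The next step, the crux of the argument, is to realize $\mathcal{T}$ as dual to a measured geodesic lamination on $\partial H$. Here the rank-$2$ hypothesis is essential: deducing from \cite{culler:vogtmann} that every small minimal action of $F_2$ on a real tree is geometric, such an action is dual to a measured geodesic lamination on a compact surface. Because the kernel of $i_*:\pi_1(\partial H) \twoheadrightarrow \pi_1(H)$ is normally generated by meridians and each meridian acts with zero translation length on $\mathcal{T}$, I would transfer this dual lamination to a measured geodesic lamination $\nu \in \mathcal{ML}(\partial H)$, together with a $\pi_1(H)$-equivariant map $f_\nu:\Hp^2 \to \mathcal{T}$ that recovers the action; in particular $i(\nu, \partial D) = 0$ for every meridian $D$.

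I would then close the argument. The assumption that the complementary regions of $\mu$ are simply connected means precisely that $\mu$ fills $\partial H$, so $i(\mu, \lambda) > 0$ for every nontrivial $\lambda \in \mathcal{ML}(\partial H)$; in particular $i(\mu, \nu) > 0$. A direct argument with $f_\nu$ then shows that $\mu$ is realized in $\mathcal{T}$: a sublamination of $\mu$ failing to be realized would be collapsed into a subtree of $\mathcal{T}$ and hence share its transverse structure with a sublamination of $\nu$, forcing $i(\mu,\nu) = 0$, a contradiction. The Continuity Theorem~\ref{contin}, applied to $M = H$ and to $\mu$, then yields a neighborhood $\mathcal{V}(\mu) \subset \partial H$ and constants $K, i_0 > 0$ such that $\varepsilon_i\, \ell_{\rho_i}(c^*) \geq K\, \ell_{s_0}(c)$ for every simple closed curve $c \subset \mathcal{V}(\mu)$ and every $i \geq i_0$. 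For $i$ large, $\mu_i \subset \mathcal{V}(\mu)$, so by additivity $\varepsilon_i\, \ell_{\rho_i}(\mu_i) \geq K\, \ell_{s_0}(\mu_i) \to \infty$ since $\mu_i \to \infty$ in $\mathcal{ML}(\partial H)$. Therefore $\ell_{\rho_i}(\mu_i) \to \infty$, contradicting the boundedness from the first step.

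The hardest part will be the middle step: extracting from Culler--Vogtmann's description of small $F_2$-trees a genuine measured geodesic lamination on the closed genus-$2$ surface $\partial H$ itself, rather than on some abstract compact surface with fundamental group $F_2$. The identification of meridians with kernel elements, and the folding and surgery needed to make the realization equivariant with respect to the handlebody group, form the technical heart of~\cite{otal:schottky}. A secondary subtlety is justifying that $\mu$ is realized (and not merely non-collapsed) in $\mathcal{T}$ from the filling property alone, which is precisely where the simple-connectedness of the complementary regions of $\mu$ is decisive.
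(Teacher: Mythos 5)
Your outline follows the same route the chapter attributes to Otal: Culler--Morgan--Shalen compactification, the Culler--Vogtmann description of small minimal $F_2$-trees as dual to laminations on compact surfaces, realization of $\mu$ in the limiting tree, and the Continuity Theorem (Theorem \ref{contin}) played against the bounded-length multicurves coming from Theorem \ref{thurston comp} and Theorem \ref{canahl}/inequality (\ref{sugawa}). The first and last steps are essentially right, up to one fixable slip: $\mu_i\to\mu$ in $\mathcal{ML}(\partial H)$ does \emph{not} imply $\mu_i\subset\mathcal{V}(\mu)$ for large $i$ (a component of tiny weight may wander far from $|\mu|$); you must discard the components leaving $\mathcal{V}(\mu)$, whose total weight tends to $0$, before applying Theorem \ref{contin}. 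This is exactly the kind of issue behind Remark \ref{almost max}.

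The genuine gap is the middle of your argument, which is where the whole content of \cite{otal:schottky} lives and where your justifications do not work as stated. First, you cannot ``transfer'' the Culler--Vogtmann dual lamination to a measured lamination $\nu\in\mathcal{ML}(\partial H)$ merely because meridians act trivially: the induced action of $\pi_1(\partial H)$ on $\mathcal{T}$ is \emph{not} small (point stabilizers contain the non-abelian kernel of $\pi_1(\partial H)\to\pi_1(H)$), so Skora's theorem \cite{skora:splitting} gives nothing on $\partial H$; the dual lamination lives on an abstract compact surface $\Sigma$ with $\pi_1(\Sigma)\cong F_2$ (pair of pants, one-holed torus, or a non-orientable option), and relating it to laminations on $\partial H$ --- via an $I$-bundle structure on $H$ over $\Sigma$ and a case analysis over the possible bases --- is the technical heart of Otal's proof, not a consequence of the kernel being normally generated by meridians. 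Second, the realization step is asserted rather than proved: the dichotomy ``$\mu$ not realized $\Rightarrow$ $\mu$ shares transverse structure with $\nu$ $\Rightarrow$ $i(\mu,\nu)=0$'' is a theorem about a surface group acting on the tree dual to a lamination \emph{on the same surface}; for the present $F_2$-tree it is precisely the statement that Masur domain laminations with simply connected complementary regions are realized, i.e.\ the main theorem (later generalized in \cite{kleineidam:souto}), and it fails without the Masur domain hypothesis, so no soft argument of this type can suffice. Finally, even granting a transferred $\nu$ with $i(\nu,\partial D)=0$ for all meridians, your claim that filling gives $i(\mu,\lambda)>0$ for \emph{every} nontrivial $\lambda$ is false when $\lambda$ shares support with $\mu$ (e.g.\ $\lambda=\mu$); to conclude $i(\mu,\nu)>0$ you must rule out $|\nu|=|\mu|$ using the Masur domain property of $\mu$ against the meridian-annihilating property of $\nu$. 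As written, then, the proposal reproduces the correct architecture but leaves unproved exactly the two assertions that make the genus-$2$ case a theorem.
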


Before discussing further developments, we should mention the work of Ohshika on free products $\Gamma=\pi_1(S_1)*\pi_1(S_2)$ of two surface groups. In \cite{ohshika:free:product}, he uses the Culler--Morgan--Shalen compactification and a careful study of actions $\Gamma$ on real trees when both surface groups have fixed points to prove a convergence result for representations in $AH(\Gamma)$ whose exterior boundary tend to a Masur domain lamination.\\

The most important breakthrough regarding the convergence of sequences in $AH(\pi_1(M))$ when $M$ has compressible boundary was achieved by Kleineidam--Souto in \cite{kleineidam:souto}. By pursuing the study of limits of meridians initiated in \cite{otal:these} and cleverly adapting some arguments from \cite{skora:splitting}, they manage to prove the following:

\begin{theorem}[{\cite[Corollary 3]{kleineidam:souto}}]
Let $H$ be a handlebody and $\pi_1(H)\times T\to T$ be a non-trivial small minimal action on a real tree $T$. Then at least one minimal component of every measured lamination in the Masur domain is realised in $T$.
\end{theorem}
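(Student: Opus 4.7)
The plan is to argue by contradiction. Suppose $\mu\in\ML(\partial H)$ lies in the Masur domain but no minimal component of $\mu$ is realised in $T$. Since $H$ is a handlebody, the inclusion $\partial H\hookrightarrow H$ induces a surjection $i_*:\pi_1(\partial H)\to\pi_1(H)$ whose kernel is normally generated by the meridians. Pull the given action back to an action of $\pi_1(\partial H)$ on $T$; this restricted action is no longer small because every meridian acts as the identity, but one may still fix a $\pi_1(\partial H)$-equivariant map $\Phi:\Hp^2\to T$ by the usual equivariant construction.

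Under the non-realisation hypothesis, after an equivariant modification one may arrange that $\Phi$ is constant on each leaf of the preimage of every minimal component of $\mu$. This forces $\Phi$ to factor through the dual tree $T_\mu$ of $\mu$, yielding a $\pi_1(\partial H)$-equivariant morphism $\psi:T_\mu\to T$. The action on $T_\mu$ is small by construction (it is dual to a measured lamination on the closed surface $\partial H$), whereas the action on $T$ annihilates the meridians, which are non-trivial in $\pi_1(\partial H)$. Consequently $\psi$ cannot be an isometry, so it must fold at some point.

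The crucial step, and the main obstacle, is to convert the folding of $\psi$ into a genuine projective limit of meridians disjoint from $\mu$. Adapting Skora's folding surgery \cite{skora:splitting}, each fold identifies two germs of transverse arcs to $\mu$ along which one can perform a cut-and-paste on $\partial H$ producing a simple closed curve whose homotopy class lies in $\ker i_*$, i.e. in the normal closure of meridians. The key technical contribution, pursuing Otal's analysis of limits of meridians in \cite{otal:these}, is to iterate this surgery while tracking the transverse measures so that (a) the resulting curves are actual meridians, not merely products of conjugates of them, and (b) they converge projectively in $\PML(\partial H)$ to some $[\nu]$ with $i(\nu,\mu)=0$. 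The existence of such a $[\nu]$ directly contradicts the defining property of the Masur domain, which requires $\mu$ to intersect every projective limit of meridians, and the theorem follows.
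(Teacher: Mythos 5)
Your overall strategy---derive from the non-realisation hypothesis a projective limit of meridians $\nu$ with $i(\nu,\mu)=0$ and contradict the defining property of the Masur domain---is indeed the route of \cite{kleineidam:souto}; note that the survey does not reprove this theorem, it only records that the proof pursues Otal's analysis of limits of meridians \cite{otal:these} and adapts Skora's folding arguments \cite{skora:splitting}. The problem is that the two load-bearing steps of your outline do not hold up. The factorization through the dual tree is unjustified and in general false: non-realisation of the minimal components of $\mu$ does not produce an equivariant map $\Hp^2\to T$ that is constant on the leaves of the preimage of $\mu$ (the pulled-back $\pi_1(\partial H)$-action is not small, so the standard dichotomies for small surface-group actions cannot simply be quoted, and this is precisely one of the difficulties Kleineidam--Souto have to address); and even a map constant on leaves does not factor through $T_\mu$, because $T_\mu$ also collapses the complementary regions of $\mu$, which carry non-trivial topology since a Masur-domain lamination need not be filling. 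Worse, if an equivariant morphism $\psi\colon T_\mu\to T$ did exist, your proof would end before any folding analysis: every meridian $m$ satisfies $i(m,\mu)>0$ (a meridian is itself a limit of meridians), hence acts hyperbolically on $T_\mu$, while equivariance forces $\psi(mx)=\psi(x)$; since the axes of the elements of the normal closure of the meridians fill $T_\mu$ (normality plus minimality of the dual action), $\psi$ would collapse $T_\mu$ to a point fixed by all of $\pi_1(H)$, contradicting non-triviality directly. The fact that no such cheap contradiction is available is exactly the sign that the factorization cannot be arranged, and that the actual argument must work with approximate resolutions of the $\pi_1(H)$-action on $T$ rather than with the dual tree of the given $\mu$.

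Second, the step you yourself call crucial---upgrading folds to \emph{genuine} meridians (not merely elements of the normal closure of the meridians, i.e.\ products of conjugates) and showing that they can be chosen to converge in $\PML(\partial H)$ to a lamination with zero intersection with $\mu$---is the technical heart of \cite{kleineidam:souto}, building on the study of limits of meridians in \cite{otal:these}, and you assert it rather than prove it. As written, your proposal is a plausible plan whose middle consists of one claim that is false as stated (the factorization through $T_\mu$) and one claim that is the theorem's actual content, left unproven; so there is a genuine gap.
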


This allows them to use the Continuity Theorem to show that, for a handlebody $M$, a sequence in $AH(\pi_1(M))$ is precompact if we assume a bound on the length of a sequence of measured laminations converging to a Masur domain lamination. These results are extended to compression bodies in the same paper (\cite{kleineidam:souto} and then to compact atoroidal $3$-manifolds in \cite{lecuire:plissage} (see also \cite[Proposition 6.1 and Theorem 6.6]{lecuire:masur}). This leads to some nice generalizations of the Double Limit Theorem such as
\cite[Theorem 8.1]{namazi:souto:density} and \cite[Theorem 3.8]{ohshika:density} although the need to converge to an almost minimal lamination (see Remark \ref{almost max}) adds some technical hypothesis to the statements.

The final page in this story was written by Kim--Lecuire--Ohshika, \cite{kim:lecuire:ohshika}, who lifted this last limitation with an area argument in a simplicial annulus as in the proof of Efficiency of Pleated Surface and the analysis of limits of boundaries of essential disks and annuli initiated in \cite{otal:these} and pursued in \cite{kleineidam:souto} and \cite{lecuire:plissage}.

\begin{theorem}[{\cite{kim:lecuire:ohshika}}]	\label{klo}
Let $M$ be a compact orientable irreducible atoroidal $3$-manifold, let $\mathcal{C}$ be a connected component of $\mathrm{int}(AH(\pi_1(M))$ containing a representation uniformizing $M$. Let $\{\rho_i\}\subset \mathcal{C}$ be a sequence of convex cocompact representations with Ahlfors--Bers coordinates $\sigma_i\in\mathcal{T}(\partial M)$. If $\sigma_i$ converges to a doubly incompressible measured lamination, then $\{\rho_i\}$ has a convergent subsequence.
\end{theorem}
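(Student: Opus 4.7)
The plan is a proof by contradiction that mirrors Otal's strategy (\S\ref{otal}) but is adapted to the compressible boundary setting and culminates in the area estimate of Kim--Lecuire--Ohshika, which allows one to dispose of the almost-minimality assumption highlighted in Remark \ref{almost max}. Suppose $\{\rho_i\}$ has no convergent subsequence. Theorem \ref{thurston comp} produces a sequence of weighted multi-curves $\{\mu_i\}$ converging projectively to $\mu$ with $\ell_{\sigma_i}(\mu_i)$ uniformly bounded. Double incompressibility, combined with the analysis of limits of meridians of \cite{otal:these} and \cite{kleineidam:souto}, forces a uniform lower bound on the $\sigma_i$-lengths of all meridians (otherwise a projective limit of short meridians would be a measured lamination with vanishing intersection with $\mu$), so the Poincar\'e metric on the domain of discontinuity has no short geodesics. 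Canary's Theorem \ref{canahl}, substituting for Lemma \ref{ahlfors} which requires simply connected domain of discontinuity, then converts the boundary bound into a uniform bound on $\ell_{\rho_i}(\mu_i)$. The goal is to show that the contradictory assumption forces $\ell_{\rho_i}(\mu_i)$ to diverge.

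Through the Culler--Morgan--Shalen compactification of \S\ref{compactification:tree}, pass to a subsequence of $\rho_i$, rescaled by $\varepsilon_i\to 0$, tending to a non-trivial small minimal action of $\pi_1(M)$ on a real tree $\mathcal{T}$. The key structural step is to produce a minimal component $\mu_0$ of $\mu$ that is \emph{realised} in $\mathcal{T}$. This generalises to compact atoroidal $M$ the work on limits of meridians and of boundaries of essential annuli developed in \cite{otal:these}, \cite{kleineidam:souto} and \cite{lecuire:plissage}: the double incompressibility hypotheses $i(\mu,\partial D)\geq\varepsilon$ for essential discs $D$ and $i(\mu,\partial A)\geq\varepsilon$ for essential annuli $A$ are precisely what is needed to rule out the two obstructions to realisation coming, respectively, from compressibility of $\partial M$ and from the characteristic submanifold. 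Once $\mu_0$ is realised, the Continuity Theorem (Theorem \ref{contin}) furnishes a neighbourhood $\mathcal{V}(\mu_0)$ of $|\mu_0|$ and a constant $K$ with $\varepsilon_i\ell_{\rho_i}(c^*)\geq K\ell_{s_0}(c)$ for every simple closed $c\subset\mathcal{V}(\mu_0)$.

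The main obstacle, and the reason previous statements (see Remark \ref{almost max}) had to assume almost-minimality, is that a general doubly incompressible $\mu$ need not be almost minimal: components of $\mu_i$ may carry substantial intersection with $\mu_0$ while travelling far outside $\mathcal{V}(\mu_0)$ through the support of $\mu-\mu_0$, and for such curves Theorem \ref{contin} gives no direct information. One needs a strengthened version of the Continuity Theorem that reads the realised component $\mu_0$ along \emph{any} transversal to $|\mu_0|$, regardless of the global geometry of the closed curve being tested.

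To produce such a bound I would import the area argument of Kim--Lecuire--Ohshika, modelled on the proof of Efficiency of Pleated Surfaces (Theorem \ref{efficiency}). Fix a small transversal $k$ to $|\mu_0|$ disjoint from the rest of $|\mu|$. For each component $c$ of $\mu_i$ crossing $k$, build a simplicial annulus in $\Hp^3/\rho_i(\pi_1(M))$ joining a controlled pleated representative of $c$ to the closed geodesic $c^*$; Gauss--Bonnet bounds its area linearly in $\ell_{\rho_i}(c)$. On the other hand, because $\mu_0$ is realised in $\mathcal{T}$, each lift of $k$ maps to a non-degenerate arc in the tree, and in the rescaled limit the strip of the annulus lying over $k$ must contain, for every intersection of $c$ with $k$, a geodesic transversal of rescaled length bounded below, so its area is at least $K'\varepsilon_i^{-1}i(c,k)$. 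Combining the two estimates yields $\varepsilon_i\ell_{\rho_i}(c)\geq K''i(c,k)$, and summing against the weights of $\mu_i$ gives $\ell_{\rho_i}(\mu_i)\geq K''\varepsilon_i^{-1}i(\mu_i,k)$. Since $i(\mu_i,k)\to i(\mu,k)\geq\int_k d\mu_0>0$, this forces $\ell_{\rho_i}(\mu_i)\to\infty$, contradicting the upper bound from the first paragraph and closing the argument.
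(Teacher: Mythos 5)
Your proposal is essentially the paper's own route: the survey gives no proof of Theorem \ref{klo} beyond citing \cite{kim:lecuire:ohshika} and describing its strategy, namely the Culler--Morgan--Shalen limit, the realization of (a component of) the doubly incompressible lamination via the analysis of limits of boundaries of discs and annuli from \cite{otal:these}, \cite{kleineidam:souto}, \cite{lecuire:plissage}, and an area argument in a simplicial annulus modelled on the Efficiency of Pleated Surfaces to remove the almost-minimality restriction of Remark \ref{almost max}. Your outline, including the substitution of Canary's Theorem \ref{canahl} for Lemma \ref{ahlfors} and the final divergence of $\ell_{\rho_i}(\mu_i)$, matches that described approach, with the hard technical steps deferred to the same cited works.
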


This statement uses a slight generalization of Masur domain introduced in \cite{lecuire:plissage}: a measured geodesic lamamination $\lambda\in\ML(\partial M)$ is {\em doubly incompressible} if there exists $\eta>0$ such that $i(\lambda,\partial E)>\eta$ for any essential disc or annulus $E\subset M$.

Notice that if $\lambda$ is not doubly incompressible, using Dehn twists along annuli, a  diverging sequence $\{\rho_i\}\subset AH(\pi_1(M))$ can be constructed so that $\sigma_i$ tend to $\lambda$ (compare with \cite{kim:diverge}).


\section{Necessary conditions}  \label{sec:necessity}

The theorems mentioned in the previous sections provide necessary conditions for a sequence to have a converging subsequence. As we already mentioned, Theorems \ref{limit double} and \ref{klo} are optimal in the sense that if a measured lamination $\lambda$ does not satisfy their assumptions, then there is a diverging sequence whose Ahlfors--Bers coordinates tend to $\lambda$. On the other hand there are a lot of converging sequences which do not satisfy the condition of Theorem \ref{klo} (Theorem \ref{limit double} is simply the special case where $M=S\times I$), i.e. their Ahlfors--Bers coordinates $\sigma_i$ tend to a measured lamination that is not doubly incompressible. This condition is far from being necessary. 

In the quasi-Fuchsian case, some necessary conditions have been established by Ohshika with \cite[theorem 3.1]{ohshika:divergent} and \cite[Theorems 3,5 and 12]{ohshika:divergence}. Let $\rho_i:\pi_1(S)\to PSL_2(\C)$ be quasi-Fuchsian representations with Ahlfors--Bers coordinates $(\sigma_i^\pm)$ such that $\{\sigma_i^\pm\}$, converge to a (projective) measured lamination $\mu^\pm$. A very rough description of Ohshika's statements could be that if $\mu^+$ and $\mu^-$ share something (a leaf or a boundary component of a supporting surface), then $\{\rho_i\}$ diverges.

There is a large gap between these necessary conditions and the sufficient conditions of Theorem \ref{klo}. One origin of this gap is the coarseness of Thurston's compactification: it only records the part of the representation that degenerates the fastest. Let us illustrate this idea with an example. Let $S$ be a closed surface and $c, d\subset S$ be two disjoint simple closed curves that are not isotopic and denote by $\psi_c,\psi_d$ the right Dehn twist along $c$, resp. $d$. Fix $X\in \mathcal{T}(S)$ and consider for every $i>0$ the quasi-Fuchsian group $\rho_i:\pi_1(S)\to PSL_2(\C)$ with Ahlfors--Bers coordinates $(\psi_c^{i^i}X,\psi_d^{i^i}X)$. It is easy to deduce from Lemma \ref{ahlfors} that $\{\rho_i\}$ has a converging subsequence. Let $\Psi_c:S\times I\to S\times I$ be the Dehn twist along the annulus $c\times I$. One can prove that $\theta_i=\rho_i\circ(\Psi_{c}^i)_*$ has no converging subsequence. On the other hand, the Ahlfors--Bers coordinates of $\theta_i$, $(\psi_c^i\circ\psi_c^{i^i}X,\psi_c^i\circ\psi_d^{i^i}X)$, have the same limit $(c,d)$ as the coordinates of $\rho_i$.

A way to have a finer compactification of Teichm\"uller space is given by the Culler--Morgan--Shalen compactification by actions on $\Lambda$-trees (see \cite{morgan:shalen:1}) and  their dual $\Lambda$-measured laminations (see \cite{morgan:otal}). A simpler alternative which ought to give similar results for this specific problem is to look at Hausdorff limits of short pants decompositions or short collections of binding curves. Necessary conditions have also been given by Ohshika with this idea, see \cite[Theorem 4]{ohshika:divergence}. But there is still a gap which, as illustrated by \cite[Example 1.4]{bbcl1}, cannot be filled within this framework.\\

Brock, Bromberg, Canary and Lecuire managed to close these gaps in \cite{bbcl1} with a different approach based on Masur--Minsky's work on the curve complex (especially \cite{masur:minsky:2}).

\begin{theorem}[{\cite{bbcl1}}]    	\label{bbcl}
Let $S$ be a compact, orientable surface and let $\{\rho_i\}$ be a
sequence in $AH(\pi_1(S))$ with Ahlfors--Bers coordinates $\{\sigma_i^\pm\}$. Then
$\{\rho_i\}$ has a convergent subsequence if and only if 
there exists a subsequence $\{\rho_j\}$ of $\{\rho_i\}$ such
that $\{\sigma_j^\pm\}$ bounds projections.
\end{theorem}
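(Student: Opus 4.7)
The plan is to link the algebraic behaviour of $\{\rho_i\}$ to the curve--complex geometry of the pair $(\sigma_i^+,\sigma_i^-)$ via the Length Bound Theorem and the bilipschitz model manifold of Minsky and Brock--Canary--Minsky. Heuristically, $(\sigma_j^\pm)$ bounding projections means that there is a uniform constant $B$ such that the subsurface projection distance $d_Y(\sigma_j^+,\sigma_j^-)\leq B$ for every essential subsurface $Y\subseteq S$, where the projections are taken through the Bers marking of each $\sigma_j^\pm$.

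For the \emph{necessity} direction, I would assume $\rho_j\to\rho_\infty$ in $AH(\pi_1(S))$ and argue by contrapositive. If along some subsequence $d_Y(\sigma_j^+,\sigma_j^-)\longrightarrow\infty$ for some essential $Y\subseteq S$, then the Length Bound Theorem produces a curve $c\subset\overline{Y}$ (either a boundary component of $Y$, or the core of $Y$ in the annular case) for which $\ell_{\rho_j}(c)\longrightarrow 0$. Since $\ell_{\rho}(c)$ is a continuous function of $\rho$, this forces $\rho_\infty(c)$ to be parabolic. One then restricts attention to the complement in $S$ of the finite collection of curves thus becoming parabolic, whose complexity is strictly smaller than $S$, and repeats the argument there; the reduction terminates after finitely many steps, yielding the required uniform bound on all projections.

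For the \emph{sufficiency} direction, assume that $(\sigma_j^+,\sigma_j^-)$ bounds projections by $B$. The Masur--Minsky hierarchy between $\sigma_j^+$ and $\sigma_j^-$ then has uniformly bounded combinatorics, and Minsky's bilipschitz model theorem supplies a model for $N_{\rho_j}=\Hp^3/\rho_j(\pi_1(S))$ whose geometry depends only on this bounded data. In particular, $N_{\rho_j}$ has uniformly bounded geometry: the injectivity radius in the convex core is bounded below, and the complex length of every simple closed curve is bounded above. With these bounds, one constructs pleated surfaces $f_j:S\to N_{\rho_j}$ whose induced metrics stay in a compact part of moduli space; the compactness of pleated surfaces that underlies Theorem~\ref{uniform injectivity} then permits extraction of an algebraically convergent subsequence.

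The principal obstacle is the implication \textbf{bounded projections $\Rightarrow$ bounded geometry}: this is not formal and demands the full strength of the Masur--Minsky hierarchy machinery together with the bilipschitz model theorem for quasi-Fuchsian manifolds. Two secondary technical points require care: first, verifying that subsurface projections defined through the Bers marking of $\sigma_j^\pm$ are an appropriate stand-in for the end invariants when invoking the Length Bound Theorem in both directions; second, handling the low-complexity surfaces (the once-punctured torus and the four-holed sphere) where the combinatorial structure is governed by the Farey graph and the statements of Masur--Minsky must be adapted accordingly. Once bounded geometry of $N_{\rho_j}$ has been extracted from the bounded projections hypothesis, the passage to algebraic convergence is comparatively standard.
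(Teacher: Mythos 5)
Your proposal hinges on reading ``$\{\sigma_j^\pm\}$ bounds projections'' as a uniform bound $d_Y(\sigma_j^+,\sigma_j^-)\leq B$ over \emph{all} essential subsurfaces $Y\subseteq S$, and this is not the condition in \cite{bbcl1}; with that reading the theorem you are proving is false, and both of your implications break. Uniformly bounded subsurface projections is (essentially by the Length Bound Theorem) a bounded-combinatorics/bounded-geometry condition, and most convergent sequences violate it: for instance the sequences produced by the Double Limit Theorem (Theorem \ref{limit double}) with $\sigma_i^\pm$ converging to binding filling laminations of distinct supports converge after passing to a subsequence, yet the limits are degenerate, arbitrarily short geodesics appear, and the annular projections $d_Y(\sigma_i^+,\sigma_i^-)$ blow up. This is exactly why your necessity argument does not close: from $d_Y(\sigma_j^+,\sigma_j^-)\to\infty$ you correctly get a curve whose $\rho_j$-length tends to $0$, hence a parabolic (or a curve of zero length ``at infinity'') in the limit --- but that is perfectly compatible with algebraic convergence, so there is no contradiction, and your claimed finite reduction ``yielding the required uniform bound on all projections'' does not follow. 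The actual condition in \cite{bbcl1} is much weaker: condition (a) rules out $\sigma_i^+$ and $\sigma_i^-$ tending to filling laminations with the same support, and condition (b) rules out a curve being simultaneously an upward and a downward pointing combinatorial parabolic, with exceptions (b)(ii) forced by the wrapping phenomenon of \cite{anderson:canary:rearrange}; these subtleties are invisible in your formulation.

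Consequently your sufficiency argument also proves the wrong thing by too-strong means: you pass from bounded projections to a bilipschitz model with uniformly bounded geometry and then to compactness of pleated surfaces, but under the correct hypothesis the manifolds $N_{\rho_j}$ need not have bounded geometry at all. The argument of \cite{bbcl1}, as sketched in Section \ref{sec:necessity}, is different and weaker in what it extracts: from the structure of the hierarchy $\mathcal{H}^0_{\sigma_j}$ of \cite{masur:minsky:2} one finds a family of \emph{binding} curves lying in $\mathcal{H}^0_{\sigma_j}$ that can be taken independent of $j$, and the a priori length bounds of \cite{elc1} (uniform over $S$) bound their $\rho_j$-lengths; convergence then follows as in Theorem \ref{dl curves}. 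For necessity, rather than your contrapositive, one uses the model manifold to compare the end invariants of the $\rho_j$ with those of an algebraic limit (\cite{bbcm:pullout}, \cite{ohshika:divergence}), and the delicate point is precisely the wrapping behaviour that your scheme has no mechanism to detect. So the gap is not technical polish: the definition you start from must be replaced, and with it the architecture of both directions.
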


The statement is short because the authors have craftily hidden the technicalities in the definition of ``bounds projections". To give the precise definition would require too many preliminaries but we will try to convey the spirit. The definition of \lq\lq bounding projections" consists in two conditions that we will refer as (a) and (b), following \cite{bbcl1}. Condition (a) essentially prevents the case where both $\{\sigma_i^+\}$ and $\{\sigma_i^-\}$ tend to filling projective measured laminations with the same support (see also \cite{ohshika:divergent}). Condition (b) sees the introduction of {\em combinatorial parabolics}. Those are simple curves on $S$ for which the behavior of $\sigma_i^+$ (for upward pointing combinatorial parabolics) or $\sigma_i^-$ (for downward pointing ones) indicates that they should be parabolics in the limit (if there was one). Condition (b) essentially says that a simple closed curve on $S$ cannot be simultaneously an upward pointing and a downward pointing combinatorial parabolic. The possibility of wrapping of the algebraic limit, as described in \cite{anderson:canary:rearrange}, compels us to add some exceptions to this last condition (condition (b)(ii), see also \cite[Theorem 6]{ohshika:divergence}).\\

The proof of Theorem \ref{bbcl} as well as the proofs of the main Theorems in \cite{ohshika:divergence} make extensive use of the works of Masur--Minsky and Minsky (\cite{masur:minsky:2} and \cite{elc1}). Masur and Minsky associate a family of simple closed curves $\mathcal{H}^0_\nu$, to a pair of end invariants $\{\sigma^\pm\}$. They add some structure to $\mathcal{H}^0_\nu$ to form what they call a {\em hierarchy} $\mathcal{H}_\nu$. Minsky builds from this hierarchy a model $M_\nu$, i.e. a piecewise Riemannian manifold homeomorphic to $S\times I$ whose metric depends only on the hierarchy $\mathcal{H}_\nu$. Then with the collaboration of Brock and Canary, Minsky (\cite{elc1} and \cite{elc2}) shows that for any hyperbolic manifold $N_\rho$ with end invariants $\{\sigma^\pm\}$, there is a bilipschitz map $M_\nu\to N_\rho$. When $\rho$ is convex cocompact, the end invariants are the Ahlfors--Bers coordinates. In general they are a mixture of conformal structure at infinity and ending laminations which describe the asymptotic behavior of the geometry of the ends of $\Hp^3/\rho(\pi_1(S))$. This work on the models led to the proof of Thurston's Ending Lamination Conjecture which asserts that a representation $\rho\in AH(\pi_1(S))$ is uniquely defined by its end invariants. One important result of \cite{elc1} that makes the construction of the model work is the existence of a bound on the length in $N_\rho$ of all the curves of $\mathcal{H}^0_\nu$. Furthermore, this bound depends only on $S$.

Let us go back to Theorem \ref{bbcl} and consider a sequence $\{\rho_i\}$ in $AH(\pi_1(S))$ with Ahlfors--Bers coordinates $\{\sigma_i^\pm\}$. When $\{\sigma_i^\pm\}$ bounds projections, Brock--Bromberg--Canary--Lecuire deduce from the structure of $\mathcal{H}^0_{\sigma_i}$ that it contains a family of binding curves independent of $i$. The convergence follows immediately. On the other hand, Brock--Bromberg--Canary--Minsky (\cite{bbcm:pullout}) and Ohshika (\cite{ohshika:divergence}) use Minsky's model to study the link between the behavior of the end invariants of a sequence and the end invariants of a limit. This leads to the divergence results in \cite{ohshika:divergence} and the necessity part in Theorem \ref{bbcl}.\\

We conclude this section by noticing that all the results we have mentioned so far give conditions for a convergence up to extracting a subsequence. To have convergence of the actual sequence would mean to completely predict the end invariants of the limit. The fact that the geometric limit often differs from the algebraic limit makes such a prediction extremely difficult.

\section{Some applications} \label{sec:applications}

The original motivation for the Double Limit Theorem and the compactness of $AH(\acylindrical)$ was the Hyperbolization Theorem for Haken manifolds:

\begin{theorem}[Hyperbolization Theorem]
Let $M$ be a compact irreducible atoroidal Haken $3$-manifold, then the interior of $M$ has a complete hyperbolic structure.
\end{theorem}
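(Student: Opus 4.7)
The plan is to induct on the length of a Haken hierarchy of $M$. Since $M$ is Haken, there is a finite sequence $M = M_0 \supset M_1 \supset \cdots \supset M_n$ in which each $M_{i+1}$ is obtained from $M_i$ by cutting along a properly embedded incompressible surface, and $M_n$ is a disjoint union of $3$-balls. The $3$-balls carry trivial convex hyperbolic structures, so the whole argument reduces to the inductive upgrade: given a geometrically finite hyperbolic structure on $M_{i+1}$, produce one on $M_i$.

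Recovering $M_i$ from $M_{i+1}$ amounts to identifying a pair of subsurfaces $F^\pm \subset \partial M_{i+1}$ by a prescribed homeomorphism $\tau \colon F^+ \to F^-$. A hyperbolic structure on $M_{i+1}$ specifies Ahlfors--Bers coordinates $\sigma \in \mathcal{T}(\partial M_{i+1})$, and the gluing is hyperbolically realised precisely when $\sigma$ is a fixed point of the composition $\tau^* \circ s$, where $s = s_{M_{i+1}}$ is the skinning map recording the conformal structure at infinity of the covers of the convex core corresponding to the boundary. I would therefore iterate $(\tau^* \circ s)^k(\sigma_0)$ from any starting point $\sigma_0$ and extract a convergent subsequence, whose limit yields the required fixed point; the corresponding hyperbolic metric on $M_i$ is then produced by standard quasi-conformal deformation theory.

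The convergence splits into two regimes following the characteristic submanifold decomposition. In the \emph{acylindrical regime}, where the relevant piece of $M_i$ is acylindrical, Theorem \ref{acylindrical} (or, for pieces that are only relatively acylindrical, the Broken Window Only Theorem \ref{broken window}) guarantees that the iteration stays in a compact subset of $AH(\pi_1(M_{i+1}))$; combined with Thurston's Bounded Image Theorem, showing that $s$ itself has precompact image, this forces the existence of a fixed point. In the \emph{fibered regime}, where $M$ fibers over $S^1$ with monodromy $\phi$, I would instead seek a fixed point of $\phi$ acting on quasi-Fuchsian space. Starting from any $X \in \mathcal{T}(S)$, the Bers sequence $\{Q(\phi^n X, \phi^{-n} X)\}$ has Ahlfors--Bers coordinates converging projectively to the stable and unstable laminations $\mu^\pm$ of $\phi$; atoroidality of $M$ forces $\phi$ to be pseudo-Anosov, so $\mu^\pm$ bind $S$, and the Double Limit Theorem \ref{limit double} delivers a convergent subsequence whose algebraic limit one shows to be $\phi$-invariant and to descend to a hyperbolic structure on the mapping torus.

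The main obstacle is precisely the convergence machinery to which the bulk of this chapter is devoted, namely Theorems \ref{limit double} and \ref{acylindrical}, both of which rest on the Uniform Injectivity Theorem and the efficiency of pleated surfaces. A secondary but nontrivial subtlety is the decomposition step: using Jaco--Shalen--Johannson's characteristic submanifold theory, one must arrange that every stage of the hierarchy reduces to one of the two regimes above, and at the base of the induction the simplest pieces (polyhedral balls) must be treated directly, e.g. via Andreev's theorem on hyperbolic polyhedra.
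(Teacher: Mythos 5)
Your outline is essentially the proof the paper attributes to Thurston and only sketches in Section \ref{sec:applications}: the fibered case via the Double Limit Theorem (Theorem \ref{limit double}) applied to the Bers sequence of a pseudo-Anosov monodromy, and the non-fibered Haken case via a hierarchy, the skinning map, and the Bounded Image Theorem, which rests on the compactness of $AH(\acylindrical)$ (Theorem \ref{acylindrical}). The steps you compress --- that a subsequential limit of the skinning iteration is actually a fixed point (this uses the contraction/holomorphy of the skinning map, not just precompactness of the orbit), the treatment of non-acylindrical gluings via the window, and the verification that the algebraic limit in the fibered case is realized by an isometry --- are exactly the details the paper delegates to the complete accounts of Otal, Morgan and Kapovich.
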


Even though Thurston never published a complete proof for reasons that he explained in \cite{thurston:proof}, he shared his arguments on multiple occasions and wrote some of them in \cite{thurston:hypI}, \cite{thurston:hypII} and \cite{thurston:hypIII}. The proof decomposes into two distinct cases, each one using a different convergence result.

In the case of manifolds that fiber over the circle, the Double Limit Theorem is used to construct an invariant metric on the cyclic cover. Thurston's arguments have been summarized in \cite{sullivan:hyperbolization:fibred} and Otal wrote a complete proof in \cite{otal:fibre} (with different arguments to prove the Double Limit Theorem, as explained in \textsection \ref{otal}).

In the other case, the compactness of $AH(\acylindrical)$ is used to establish the Bounded Image Theorem (see \cite[Theorem 41]{kent:skinning}) which allows hyperbolic pieces to be glued together to form a larger hyperbolic manifold. Morgan summarized Thurston's arguments in \cite{morgan:hyperbolization} and Kapovich, \cite{kapovich:book}, and Otal, \cite{otal:haken}, wrote complete proofs.\\

The Density Theorem is another example of a proof in which convergence results play an important role. 

\begin{theorem}[Bers--Thurston's Density]  \label{bers-thurston}
Every finitely generated Kleinian group is an algebraic limit of geometrically finite groups.
\end{theorem}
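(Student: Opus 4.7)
The plan is to approximate an arbitrary finitely generated Kleinian representation by convex cocompact ones using the classification of hyperbolic $3$-manifolds by their end invariants. I will sketch the argument assuming $\rho \in AH(G)$ has no parabolic elements; the general case requires relative deformation spaces and a parallel argument.

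First, by the Tameness Theorem of Agol and Calegari--Gabai, $N_\rho=\Hp^3/\rho(G)$ is homeomorphic to the interior of a compact, irreducible, atoroidal $3$-manifold $M$ with $\pi_1(M)\cong G$. The ends of the convex core of $N_\rho$ are in bijection with the components of $\partial M$, and each is either geometrically finite or simply degenerate. To a geometrically finite end I associate its conformal structure at infinity $\sigma_j\in\mathcal{T}(\partial_j M)$, and to a degenerate end I associate its ending lamination $\lambda_j$, a filling geodesic lamination on $\partial_j M$.

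Next, I build an approximating sequence. On each geometrically finite end set $\sigma_{j,i}=\sigma_j$, and on each degenerate end choose $\sigma_{j,i}\in\mathcal{T}(\partial_j M)$ converging in Thurston's compactification to a projective measured lamination supported on $\lambda_j$ (such measures exist on a filling ending lamination). Lifting $(\sigma_{j,i})_j$ through the Ahlfors--Bers parametrization in the component $\mathcal{C}$ of $\mathrm{int}(AH(G))$ containing representations that uniformize $M$ produces a sequence $\{\rho_i\}$ of convex cocompact representations. The core step is to apply Theorem \ref{klo} to extract an algebraically convergent subsequence. This requires the limit lamination on $\partial M$ to be doubly incompressible, which is precisely the content of the theory of Masur domain ending laminations developed by Canary, Kleineidam--Souto and Lecuire: ending laminations of tame hyperbolic $3$-manifolds lie in (the generalization to arbitrary compact $M$ of) the Masur domain, hence carry doubly incompressible measures. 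With this in hand, Theorem \ref{klo} produces an algebraic limit $\rho_\infty\in AH(G)$.

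Finally, I identify $\rho_\infty$ with $\rho$. By construction and by the continuity of conformal structures at infinity under algebraic limits on geometrically finite ends, $\rho_\infty$ has the same conformal structure at infinity as $\rho$ on the geometrically finite ends; on the degenerate ends, a length estimate (using that $\ell_{\rho_i}(\sigma_{j,i})\to 0$ along curves approximating $\lambda_j$, combined with the Efficiency of Pleated Surfaces) forces $\rho_\infty$ to have $\lambda_j$ as its ending lamination there. The Ending Lamination Theorem of Brock--Canary--Minsky then gives $\rho_\infty=\rho$ up to conjugation.

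The main obstacle is verifying that an ending lamination determines a doubly incompressible measured lamination, which ultimately rests on the long sequence of results on limits of meridians and essential annuli culminating in Theorem \ref{klo}. A subtler point is the identification of the end invariants of $\rho_\infty$ with those of $\rho$: because the geometric and algebraic limits can differ and wrapping phenomena can occur, one must argue carefully that no end invariant of $\rho$ is lost when passing to $\rho_\infty$, typically by exhibiting suitable bounded-length curves and comparing Masur--Minsky hierarchies for the two sets of end invariants.
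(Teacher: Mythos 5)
Your sketch follows essentially the same route the paper outlines for this theorem: the Tameness Theorem to get a compact model and end invariants, a convergence theorem in the style of Theorem \ref{klo} applied to convex cocompact approximants whose Ahlfors--Bers coordinates degenerate toward the ending laminations, and the Ending Lamination Theorem together with the argument that the non-realized laminations are in fact ending laminations of the limit, identifying it with the original representation. The one caveat is that Theorem \ref{klo} as stated requires the whole coordinate $\sigma_i$ to converge to a doubly incompressible lamination on $\partial M$, so in the mixed case (some ends geometrically finite, some degenerate) you must invoke the relative convergence statements (e.g.\ \cite[Theorem 8.1]{namazi:souto:density} or \cite[Theorem 3.8]{ohshika:density}) rather than Theorem \ref{klo} verbatim---exactly the ``weaker statement'' the paper alludes to.
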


This statement resolves a generalization due to Sullivan and Thurston of a conjecture of Bers. Combined with works of Marden and Sullivan, Theorem \ref{bers-thurston} shows that the deformation space $AH(\pi_1(M))$ does not have any isolated point. Its proof has been written out by Namazi--Souto, \cite{namazi:souto:density}, and Ohshika \cite{ohshika:density}, it uses the Tameness Theorem (\cite{agol:tameness} and \cite{calegari:gabai}), the Ending Lamination Theorem (\cite{elc1}, \cite{elc2}), a convergence Theorem (for example Theorem \ref{klo}, but a weaker statement is sufficient) and an additional argument to show that non-realizable laminations are ending laminations (see \cite[Theorem 1.4]{namazi:souto:density} or \cite[Proposition 6.5]{ohshika:density}). The fact that this proof uses the resolutions of two difficult conjectures is a good illustration of how unfathomable the topology of the deformation space $AH(\pi_1(M))$ can be. Notice that an alternate approach has been developed by Brock--Bromberg \cite{brock:bromberg:density} when $\partial M$ is incompressible.\\

Combining the Density Theorem with the Ahlfors--Bers coordinates, we get that $AH(\pi_1(M)$ is the closure of an union of topological balls (assuming that $\partial M$ is incompressible to simplify the statements). Despite this apparent simplicity, various exotic phenomenons have been observed. First Anderson--Canary, \cite{anderson:canary:rearrange} proved that two of those balls may have intersecting closures. Then McMullen, \cite{mcmullen:bumping} and Bromberg--Holt, \cite{bromberg:holt:bumping} showed that those balls may self-bump. Lastly, Bromberg, \cite{bromberg:conloc}, and Magyd \cite{magid:conloc}, concluded that $AH(\pi_1(M))$ may not be locally connected. On the other hand by studying the ways different sequences converge to a point, we can find points where none of these happen:

\begin{theorem}[{\cite{bbcm:localtop} and \cite{bbclm:localtop}}]
Let $M$ be a compact atoroidal $3$-manifold with incompressible boundary. If $\rho$ is a quasiconformally rigid point in $\partial AH(\pi_1(M))$ then $\rho$ is uniquely approachable. In particular, $AH(\pi_1(M))$ is locally connected at $\rho$ and there is no self--bumping at $\rho$.
\end{theorem}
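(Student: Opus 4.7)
The plan is to combine the Ending Lamination Theorem with the bilipschitz model manifold of Minsky and Brock--Canary--Minsky and the convergence results surveyed in this chapter, in order to control how sequences in $\mathrm{int}(AH(\pi_1(M)))$ can approach the quasiconformally rigid point $\rho$. I would first identify the end invariants of $\rho$, then show every approximating sequence has Ahlfors--Bers coordinates converging to these invariants in a sufficiently strong sense, and finally extract unique approachability, from which local connectedness and the absence of self-bumping follow formally.

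First I would describe $\rho$ itself. By the Tameness Theorem, $N_\rho=\Hp^3/\rho(\pi_1(M))$ is topologically tame, so each of its ends is either geometrically finite or simply degenerate with an associated ending lamination. Quasiconformal rigidity forces every geometrically finite end to be a thrice-punctured sphere (it carries no Teichm\"uller parameter), while every remaining end is degenerate with an ending lamination supported on an essential subsurface of $\partial M$. The Ending Lamination Theorem then says that $\rho$ is determined by this combined data.

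Second, take any sequence $\{\rho_n\}\subset\mathrm{int}(AH(\pi_1(M)))$ with $\rho_n\to \rho$. Since $\partial M$ is incompressible, each $\rho_n$ lies in a component parametrised by its Ahlfors--Bers coordinates $\sigma_n\in\mathcal{T}(\partial N_n)$ for a marked model $(N_n,h_n)$. Comparing the bilipschitz model of $N_\rho$ with that of $N_n$ on larger and larger compact cores, I would show that the restriction of $\sigma_n$ to each end of $N_\rho$ must converge either to the fixed thrice-punctured sphere structure or, in Thurston's compactification, to the projective class of the corresponding ending lamination of $\rho$; equivalently, the subsurface projections of $\sigma_n$ must approximate those dictated by the end invariants of $\rho$. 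This rigid convergence leaves no room for a subsequential change in the marked topology $(N_n,h_n)$, because any re-marking placing $\rho_n$ in a different component would produce an unbounded change in some subsurface projection of $\sigma_n$ and contradict the convergence. Hence all $\rho_n$ eventually lie in one fixed component $\mathcal{C}(\rho)$, which is unique approachability; local connectedness and no self-bumping then follow because Ahlfors--Bers coordinates identify $\mathcal{C}(\rho)$ with a ball whose boundary behaviour at $\rho$ is governed by the end invariants of $\rho$.

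The hardest step is the strong convergence in the third paragraph: showing that the $\sigma_n$ approach the end invariants of $\rho$ uniformly enough across all essential subsurfaces to forbid any re-marking that would switch components. This requires a quantitative version of the Ending Lamination Theorem, describing how end invariants vary under algebraic perturbation, and leans on the full Masur--Minsky hierarchy machinery together with the Brock--Canary--Minsky model, the length bounds coming from the convergence theorems surveyed above (Theorem \ref{klo} and its predecessors), and the topological classification of components of $\mathrm{int}(AH(\pi_1(M)))$ for manifolds with incompressible boundary. Once this quantitative control is in place, the conclusions are a formal consequence of Ahlfors--Bers coordinates giving a homeomorphism between $\mathcal{C}(\rho)$ and a Teichm\"uller space.
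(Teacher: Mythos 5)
This theorem is only quoted in the survey (the proofs live in the cited papers of Brock--Bromberg--Canary--Minsky and Brock--Bromberg--Canary--Lecuire--Minsky), so your proposal has to stand on its own, and it has a genuine gap at the final, decisive step. ``Uniquely approachable'' does not mean that every sequence $\{\rho_n\}\subset\mathrm{int}(AH(\pi_1(M)))$ converging to $\rho$ eventually lies in a single component $\mathcal{C}(\rho)$; it means that for every neighborhood $U$ of $\rho$ there is a smaller neighborhood $V$ such that any two points of $V\cap\mathrm{int}(AH(\pi_1(M)))$ can be joined by a path inside $U\cap\mathrm{int}(AH(\pi_1(M)))$. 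Your reduction to ``no re-marking, one component'' and the claim that local connectedness and absence of self-bumping then ``follow formally because Ahlfors--Bers coordinates identify $\mathcal{C}(\rho)$ with a ball'' is exactly the step that fails: self-bumping is a phenomenon that occurs within the closure of a \emph{single} component (McMullen's self-bumping of quasi-Fuchsian space, Bromberg--Holt), and the non-local-connectedness examples of Bromberg and Magid occur even though every component of the interior is a ball. The homeomorphism $q:\mathcal{C}\to\mathcal{T}(\partial N)$ carries no information about the behavior of the parametrization at $\partial AH(\pi_1(M))$; the failure of Ahlfors--Bers coordinates and end invariants to vary continuously under algebraic limits (wrapping, the divergence of algebraic and geometric limits) is precisely the source of these pathologies. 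So even granting your (already delicate) claim that the $\sigma_n$ converge to the end invariants of $\rho$ in a strong, subsurface-projection sense, nothing in your argument produces paths connecting two nearby interior points while staying algebraically close to $\rho$, which is what the conclusion requires.

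What is missing, and what the actual proofs supply, is a path construction with quantitative control near the boundary point. Quasiconformal rigidity is used not merely to pin down the end invariants of $\rho$ via the Ending Lamination Theorem, but to ensure the geometrically finite ends carry no moduli, so that approximates can be deformed toward a standard family obtained by pinching the cusps of $\rho$ and reopening them; the uniform length bounds on hierarchy curves from the model manifold, together with drilling/filling results of Bromberg and Brock--Bromberg, are what allow one to interpolate end invariants and connect two representations in a small neighborhood $V$ by a path of (quasiconformal) deformations that never leaves $U$. Your observation that the marked homeomorphism type cannot change along the sequence also needs justification rather than assertion, since Anderson--Canary wrapping shows closures of distinct components can intersect; this is handled in the cited papers, but it is an input to, not a substitute for, the path-building argument. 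Without that argument the statement about local connectedness and no self-bumping does not follow from anything you have established.
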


A representation $\rho$ is {\em quasiconformally rigid} if $\Omega_\rho/\rho(\pi_1(M))$ is a union of three holed spheres.\\ 

The proofs of many more results could illustrate the usefulness of the convergence results presented in this chapter. To drive this point home, let us also mention the work of Bonahon--Otal \cite{bonahon:otal:plissage} and Lecuire \cite{lecuire:plissage} on bending measured laminations and the work of Namazi \cite{namazi:thesis}, Namazi--Souto \cite{namazi:souto:heegaard:splitting} and Brock--Minsky--Namazi--Souto \cite{bmns:bounded:combinatorics}
 on models for compact and non-compact hyperbolic $3$-manifolds.
 
 We would like to conclude this chapter by mentioning an article of Biringer--Souto, \cite{biringer:souto:unfaithful}, where the authors study sequence of unfaithful representations.

\bibliographystyle{abbrv}
\bibliography{refs}

\end{document}